\documentclass[a4paper,12pt]{article}
\usepackage{amsmath,amssymb,amsthm,graphics,latexsym,amsfonts}
\usepackage{fancyhdr}
\usepackage{color}
\usepackage{graphics}
\usepackage{epstopdf}
\usepackage{amssymb}
\usepackage{cite}
\usepackage{hyperref}
\usepackage{tikz}
\usepackage{cleveref}

\hypersetup{
    colorlinks=true,
    linkcolor=cyan,
    filecolor=cyan,
    urlcolor=cyan,
    citecolor=cyan,
}

\title{\Large Isolation partitions in graphs}
	\author{ {Gang Zhang\,, Weiling Yang\footnotemark[1]\,, Xian'an Jin}\vspace{2mm}\\
	\small  School of Mathematical Sciences, Xiamen University,\\
	\small  Xiamen, Fujian 361005, P.R. China\\
}
\date{}

\newtheorem{theorem}{Theorem}[section]
\newtheorem{lemma}[theorem]{Lemma}
\newtheorem{corollary}[theorem]{Corollary}
\newtheorem{claim}{Claim}[section]

\newtheorem{conjecture}[theorem]{Conjecture}
\newtheorem{proposition}[theorem]{Proposition}
\newtheorem{problem}[theorem]{Problem}

\usepackage{indentfirst}

\newcommand{\vertex}{\node[vertex]}
\tikzstyle{vertex}=[circle, draw, inner sep=0pt, minimum size=6pt]

\begin{document}
	
\renewcommand{\thefootnote}{\fnsymbol{footnote}}
\footnotetext[1]{Corresponding author.\\
	\hangindent=1.8em E-mail addresses: gzh\_ang@163.com, ywlxmu@xmu.edu.cn, xajin@xmu.edu.cn.}
	
\maketitle

\small \noindent{\bfseries Abstract} Let $G$ be a graph and $k \geq 3$ an integer. A subset $D \subseteq V(G)$ is a $k$-clique (resp., cycle) isolating set of $G$ if $G-N[D]$ contains no $k$-clique (resp., cycle). In this paper, we prove that every connected graph with maximum degree at most $k$, except $k$-clique, can be partitioned into $k+1$ disjoint $k$-clique isolating sets, and that every connected claw-free subcubic graph, except 3-cycle, can be partitioned into four disjoint cycle isolating sets. As a consequence of the first result, every $k$-regular graph can be partitioned into $k+1$ disjoint $k$-clique isolating sets.\\
{\bfseries Keywords}: Domination; Isolation number; Partitions; $k$-cliques; Cycles

\noindent{\bfseries Mathematics Subject Classification}: 05C69; 05C15

\section {\large Introduction}

All graphs considered in this paper are simple, finite and undirected. Let $G=(V,E)$ be a such graph. For a vertex $v \in V(G)$, we use $N_G(v)$ to denote the set of neighbors of $v$ in $G$, that is, $N_G(v)=\{u \in V(G): uv \in E(G)\}$. Let $N_G[v]=N_G(v) \cup \{v\}$. As usual, the sets $N_G(v)$ and $N_G[v]$ are called the {\it open} and {\it closed neighborhood of $v$} in $G$, respectively. For a subset $S \subseteq V(G)$, the {\it open} and {\it closed neighborhood of $S$} in $G$ are the sets $N_G(S):=\bigcup_{v \in S}N_G(v) \setminus S$ and $N_G[S]=N_G(S) \cup S$. Without causing ambiguity, we shall omit the subscript ``$G$'' from the notations above, for instance, $N(v)=N_G(v)$ simply. For a subset $S \subseteq V(G)$, let $G[S]$ and $G-S$ be the {\it subgraphs of $G$ induced by $S$} and {\it $V(G) \setminus S$}, respectively. The readers can be referred to \cite{Bondy2008} for more graph theory terminologies that are not explicitly described or defined here.

Let $P_n$, $C_n$, $K_n$ and $K_{1,n-1}$ denote the {\it $n$-path}, {\it $n$-cycle}, {\it $n$-clique} and {\it $n$-star}, respectively. A {\it tree} is a connected acyclic graph. A {\it claw-free} graph means it contains no induced $K_{1,3}$. For an integer $k \geq 0$, a graph $G$ is said to be {\it $k$-regular} if $d(v)=k$ for any vertex $v \in V(G)$. A {\it cubic} graph is a 3-regular graph. Let $\Delta(G)$ denote the {\it maximum degree} of a graph $G$. The graph $G$ is said to be {\it subcubic} if $\Delta(G) \leq 3$. For any integer $l \geq 1$, we denote $[l]:=\{1,2,\ldots,l\}$ simply.

\vspace{3mm}
The isolation of graphs is a natural generalization of domination, which was introduced by Caro and Hansberg \cite{Caro2017} in 2017.

\begin{itemize}
	\item A subset $D \subseteq V(G)$ is a {\it dominating set} of a graph $G$ if each vertex in $V(G) \setminus D$ is adjacent to at least one vertex in $D$, i.e., $N[D]=V(G)$ or $G-N[D] \cong (\emptyset,\emptyset)$. The {\it domination number} of $G$, denoted by $\gamma(G)$, is the minimum cardinality of a dominating set of $G$.
	
	\item A subset $D \subseteq V(G)$ is an {\it $\mathcal{H}$-isolating set} of a graph $G$ if $G-N[D]$ contains no member of $\mathcal{H}$ as a subgraph, where $\mathcal{H}$ is a family of connected graphs. The {\it $\mathcal{H}$-isolation number} of $G$, denoted by $\iota(G,\mathcal{H})$, is the minimum cardinality of an $\mathcal{H}$-isolating set of $G$. If $\mathcal{H}=\{H\}$, then we simply have the notation of {\it $H$-isolation} of $G$, and write $\iota(G,H)$ for short.
\end{itemize}

\noindent Note that $K_1$-isolation is the domination of a graph $G$, and that $\gamma(G)=\iota(G,K_1)$. A $K_2$-isolating set of a graph $G$ is simply called an {\it isolating set} of $G$, and we denote by $\iota(G)$ instead of $\iota(G,K_2)$ the {\it isolation number} of $G$. For any integer $k \geq 1$, $K_k$-isolation is called {\it $k$-clique isolation}, and we denote by $\iota(G,k)$ simply the {\it $k$-clique isolation number} of a graph $G$. Let $\mathcal{C}$ be the set of cycles of all lengths, i.e., $\mathcal{C}:=\{C_k: k \geq 3\}$. We call $\mathcal{C}$-isolation {\it cycle isolation}, and denote by $\iota_c(G)$ simply the {\it cycle isolation number} of a graph $G$.

\vspace{3mm}
In this paper, we study whether the vertices of a graph can be partitioned into a certain number of disjoint $\mathcal{H}$-isolating sets or not, specifically, $k$-clique and cycle isolating sets. Several known results are summarized as follows, and more research on isolation in graphs can be found in numerous literatures, e.g. \cite{Bartolo2024,Chen2023,Cui2024,Yan2022,Yin2024,Zhang2024}.

	\begin{theorem}(Ore \cite{Ore1962}).\label{th1.1}
	Every connected graph, except $K_1$, can be partitioned into two disjoint dominating sets.
    \end{theorem}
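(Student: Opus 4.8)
The plan is to pass to a spanning tree and exploit its bipartition. Since $G$ is connected and $G \not\cong K_1$, we have $|V(G)| \geq 2$, so $G$ has a spanning tree $T$, and $T$ has at least one edge. Being acyclic, $T$ is bipartite; fix the bipartition $V(G) = V(T) = A \cup B$ with $A \cap B = \emptyset$. Because $T$ is connected with at least two vertices, it has no isolated vertex, and the presence of an edge forces both $A$ and $B$ to be nonempty, so $\{A,B\}$ is a genuine partition of $V(G)$ into two parts.

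I would then check that both parts are dominating sets of $G$. Let $v \in V(G)$. Since $v$ is not isolated in $T$, it has a neighbour $u$ in $T$; if $v \in A$ then $u \in B$, so $v \in N_T[B]$, and if $v \in B$ then trivially $v \in B \subseteq N_T[B]$. Hence $N_T[B] = V(G)$, and since $T$ is a spanning subgraph of $G$ we have $N_G[B] \supseteq N_T[B] = V(G)$, so $B$ is a dominating set of $G$. Swapping the roles of $A$ and $B$ shows $A$ is a dominating set of $G$ as well, which yields the required partition.

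There is no real obstacle here; the argument is elementary. The only point that needs care is nonemptiness: the empty set fails to dominate any nonempty graph, so a partition into \emph{two} dominating sets genuinely requires both parts to be nonempty, and this is exactly what can fail only when $|V(G)| = 1$, i.e.\ for the excluded graph $K_1$. (Equivalently, one could root $T$ and let $A$ and $B$ consist of the vertices at even, respectively odd, distance from the root; this produces the same partition.)
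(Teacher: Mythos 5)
Your proof is correct. The paper states this result only as a cited theorem of Ore and gives no proof of its own, so there is no in-paper argument to compare against; what you have written is the standard spanning-tree proof of Ore's partition theorem. All the relevant details are handled properly: the exclusion of $K_1$ guarantees the spanning tree has an edge, hence both colour classes of its bipartition are nonempty, and every vertex of one class has a tree-neighbour (hence a $G$-neighbour) in the other class, so each class dominates the whole graph.
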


\begin{corollary}(Ore \cite{Ore1962}).
	If $G \ncong K_1$ is a connected graph of order $n$, then $$\gamma(G) \leq \frac{n}{2}.$$
\end{corollary}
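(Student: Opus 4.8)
The plan is to deduce this immediately from \Cref{th1.1}. Since $G \ncong K_1$ is connected, that theorem guarantees a partition of $V(G)$ into two disjoint dominating sets $D_1$ and $D_2$, so that $|D_1| + |D_2| = |V(G)| = n$. By the pigeonhole principle one of the two parts has at most half the vertices; without loss of generality say $|D_1| \le n/2$. As $D_1$ is itself a dominating set of $G$, the definition of the domination number yields $\gamma(G) \le |D_1| \le n/2$, which is the desired bound.

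There is essentially no obstacle here: the entire content is carried by \Cref{th1.1}, and the corollary is just the observation that a partition into two dominating sets forces a small dominating set. The only minor point to keep in mind is that the hypothesis $G \ncong K_1$ is exactly what is needed to apply \Cref{th1.1} (a single vertex cannot be split into two nonempty dominating sets), and connectedness is likewise inherited directly from the statement of that theorem. I would simply state these two sentences as the proof, with no further case analysis or computation required.
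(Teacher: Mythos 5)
Your proof is correct and is exactly the intended derivation: the paper presents this as an immediate corollary of Theorem~\ref{th1.1}, obtained by taking the smaller of the two dominating sets in the partition. Nothing further is needed.
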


	\begin{theorem}(Caro and Hansberg \cite{Caro2017}).\label{th1.3}
	If $G \notin \{K_2,C_5\}$ is a connected graph of order $n$, then $$\iota(G) \leq \frac{n}{3}.$$
    \end{theorem}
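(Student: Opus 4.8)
\emph{Proof strategy.} The plan is to induct on the order $n=|V(G)|$, using the fact that $D\subseteq V(G)$ is a $K_2$-isolating set of $G$ precisely when $V(G)\setminus N[D]$ induces an edgeless subgraph. For the base one inspects all connected graphs of order at most $5$ directly: if some vertex $v$ has $d(v)\ge n-2$ then $|V(G)\setminus N[v]|\le 1$, so $\iota(G)=1\le n/3$ as soon as $n\ge 3$, while the only connected graphs of order at most $5$ with maximum degree at most $2$ are paths and cycles, among which exactly $K_2$ and $C_5$ fail the inequality, since $\iota(K_2)=1>2/3$ and $\iota(C_5)=2>5/3$. Hence the two excluded graphs are precisely the small obstructions, and from now on we may assume $n\ge 6$ with the bound known for every connected graph of smaller order (other than $K_2$ and $C_5$, on which it is never invoked).

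For the inductive step the engine is a local reduction. Suppose we can find a nonempty connected set $S\subseteq V(G)$ with $|N[S]|\ge 3|S|$ such that every component of $G-N[S]$ is either $K_1$ or a connected graph different from $K_2$ and $C_5$, so that the induction hypothesis applies to each component. Writing $C_1,\dots,C_t$ for the components of $G-N[S]$ and choosing a $K_2$-isolating set $D_i$ of $C_i$ with $|D_i|\le |V(C_i)|/3$ (the empty set when $C_i\cong K_1$), the set $D=S\cup D_1\cup\dots\cup D_t$ is a $K_2$-isolating set of $G$ with
\[
|D|\ \le\ |S|+\sum_{i=1}^{t}\frac{|V(C_i)|}{3}\ =\ |S|+\frac{n-|N[S]|}{3}\ \le\ |S|+\frac{n-3|S|}{3}\ =\ \frac{n}{3},
\]
as desired. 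So the whole argument reduces to producing one such $S$; the natural first attempt is a single vertex $v$ of large degree, or a vertex near an end of a longest path of $G$, for which $|N[v]|\ge 3$ is automatic and one only has to understand the components of $G-N[v]$.

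The step I expect to be the main obstacle is exactly guaranteeing that a valid $S$ exists — in particular coping with the possibility that for every candidate vertex $v$ the graph $G-N[v]$ still contains a component isomorphic to $K_2$ or $C_5$. Such bad components are tightly constrained: a $K_2$-component $\{x,y\}$ of $G-N[v]$ has all of its edges to $V(G)\setminus\{x,y\}$ landing inside $N(v)$, and a $C_5$-component likewise attaches to the rest of $G$ only through $N(v)$, so each bad component sits inside a dense local block around $N[v]$. The resolution is to enlarge the removed set, replacing $v$ by a connected $S\supseteq\{v\}$ that reaches into and absorbs the offending component(s); the delicate point is to carry this out while preserving the density condition $|N[S]|\ge 3|S|$, which forces one to show that the local block around a bad component carries enough vertices to pay for the extra vertices put into $S$, and to verify that the absorption neither creates a fresh $K_2$- or $C_5$-component nor turns $G$ into one of the excluded graphs. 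This bounded case analysis, organised by how the $K_2$- and $C_5$-components meet $N(v)$, is the genuine content of the theorem; once it is in place, the displayed counting inequality finishes the proof.
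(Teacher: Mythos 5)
Your framework is sound and is essentially the standard inductive scheme used for results of this type (it is close in spirit to how Caro--Hansberg and later Borg-style isolation bounds are actually proved): find a connected $S$ with $|N[S]|\ge 3|S|$ such that no component of $G-N[S]$ is $K_2$ or $C_5$, apply induction to the components, and add up. The base case and the displayed counting inequality are correct. But there is a genuine gap, and you have named it yourself: the entire content of the theorem is the existence of such an $S$, and your proposal stops exactly there, deferring it to an unspecified ``bounded case analysis, organised by how the $K_2$- and $C_5$-components meet $N(v)$.'' Nothing in the write-up shows that the absorption step can always be carried out while keeping $|N[S]|\ge 3|S|$; a priori, each bad component you fold into $S$ costs you $2$ or $5$ new vertices of $S$ against at most a few new vertices of $N[S]$, and it is not obvious (indeed it is the hard part of the published proof) that the bookkeeping always closes, nor that absorbing one bad component cannot create new ones indefinitely. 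As it stands this is a correct proof \emph{strategy} with the decisive lemma unproved, so it cannot be accepted as a proof.

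For comparison: the paper does not prove Theorem \ref{th1.3} at all --- it is quoted from Caro and Hansberg --- and the paper's stated route to it is as an immediate corollary of Theorem \ref{th1.6}: if $V(G)$ partitions into three disjoint isolating sets, the smallest class has at most $n/3$ vertices. That derivation is one line once the (much harder) partition theorem is granted, whereas your approach aims at the weaker bound directly. If you want to complete your argument, the missing piece is precisely the structural analysis of how $K_2$- and $C_5$-components of $G-N[v]$ attach to $N(v)$ for a suitably chosen $v$ (e.g.\ a neighbour of an endpoint of a longest path or a BFS-leaf), and that analysis must be written out in full before the counting step can be invoked.
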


	\begin{theorem}(Borg, Fenech and Kaemawichanurat \cite{Borg&Fenech2020}).\label{th1.4}
	Let $k \geq 3$ be an integer. If $G \ncong K_k$ is a connected graph of order $n$, then $$\iota(G,k) \leq \frac{n}{k+1}.$$
\end{theorem}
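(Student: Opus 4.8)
The plan is to prove the bound by induction on $n=|V(G)|$, using the standard reduction: delete the closed neighborhood of one vertex and recurse on the components. First I would dispose of the trivial cases: if $G$ has no $K_k$ then $D=\emptyset$ is a $k$-clique isolating set and $0\le n/(k+1)$; and a connected graph that contains a $K_k$ but is not $K_k$ has $n\ge k+1$, so the induction is well founded.

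For the inductive step let $G$ be connected, $G\ncong K_k$, and let $G$ contain a $K_k$. If some vertex $v$ has $G-N[v]$ $K_k$-free, then $\{v\}$ is a $k$-clique isolating set and $1\le n/(k+1)$, and we are done. Otherwise fix a vertex $v$, let $C_1,\dots,C_m$ be the components of $G-N[v]$, and note that the union of $\{v\}$ with $k$-clique isolating sets of the $C_i$ is again a $k$-clique isolating set of $G$, so $\iota(G,k)\le 1+\sum_{i=1}^m\iota(C_i,k)$. A component $C_i\ncong K_k$ satisfies $\iota(C_i,k)\le|V(C_i)|/(k+1)$ by the induction hypothesis (this also covers the $K_k$-free components, whose isolation number is $0$), while a component $C_i\cong K_k$ contributes $1$ but spans only $k$ vertices. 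Writing $t_v$ for the number of components of $G-N[v]$ isomorphic to $K_k$, a short computation rewrites the target inequality $\iota(G,k)\le n/(k+1)$ as the sufficient condition
\[
|N[v]| \geq k+1+t_v,
\]
which relaxes further to $|N[v]|+f_v\ge k+1+t_v$ if we also credit the number $f_v$ of vertices lying in $K_k$-free components of $G-N[v]$. So the theorem reduces to showing that a connected graph $G\ncong K_k$ is either isolated by a single vertex or has a vertex meeting this condition.

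To produce such a vertex I would choose it by a locational extremal argument rather than arbitrarily. A natural start is a $K_k$-subgraph $X$ together with a vertex $v\in X$ having a neighbor outside $X$ — this exists because $G$ is connected and $G\ncong K_k$ — which already gives $|N[v]|\ge k+1$ and settles the case $t_v=0$. The crux, and what I expect to be the main obstacle, is the appearance of components of $G-N[v]$ isomorphic to $K_k$: each such component $Y$ is a $K_k$ all of whose $G$-neighbors lie in $N(v)$, a \emph{pendant} clique that got cut off. I would handle this in two regimes. When $|N[v]|$ is comfortably above $k+1$, I would bound $t_v$ in terms of the neighbors of $v$ (and, if it helps, switch to a neighbor of $v$ of still larger degree), so that $|N[v]|\ge k+1+t_v$ survives. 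When $|N[v]|=k+1$ but pendant $K_k$'s still occur, $G$ must be locally very restricted — essentially a few $K_k$'s strung together along short paths — and then there is a single vertex lying on such a path, adjacent to several of these cliques simultaneously, whose closed neighborhood meets all of them; this either makes $G-N[v']$ outright $K_k$-free for that vertex $v'$, or drives $t_{v'}$ down to a value the degree can pay for. Making the counting in the first regime precise, and pinning down and checking the finitely many local configurations in the second regime, is where the real work lies; the remainder is the routine induction above.
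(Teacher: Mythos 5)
First, note that the paper does not prove this statement at all: Theorem~\ref{th1.4} is quoted from Borg, Fenech and Kaemawichanurat \cite{Borg&Fenech2020}, so there is no in-paper proof to compare against, and your attempt has to stand on its own.

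Your inductive framework is sound as far as it goes: the reduction $\iota(G,k)\le 1+\sum_i\iota(C_i,k)$ over the components $C_i$ of $G-N[v]$ is valid (any $K_k$ of $G-N[\{v\}\cup\bigcup_iD_i]$ lies in a single component), and your arithmetic correctly turns the target bound into the sufficient condition $|N[v]|+f_v\ge k+1+t_v$. The genuine gap is that the existence of a vertex $v$ satisfying this condition (or making $G-N[v]$ outright $K_k$-free) is never established; everything after ``the crux'' is a description of two regimes you intend to analyze, ending with the admission that making them precise ``is where the real work lies.'' That unproved existence claim \emph{is} the theorem: your easy case ($v$ in a $K_k$ with an outside neighbor, giving $|N[v]|\ge k+1$) only handles $t_v=0$, and it fails already for the graph consisting of two $K_k$'s each joined by one edge to a common cut vertex $c$, where the natural choice of $v$ inside a clique gives $|N[v]|=k+1$ and $t_v=1$. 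You correctly observe that switching to $c$ rescues this example, but you give no argument that a rescuing vertex exists in general, no bound on $t_v$ in the ``high degree'' regime, and no enumeration of the ``locally restricted'' configurations in the other regime. Since the published proof of this bound occupies several pages of structural case analysis, this is not a routine verification that can be waved through; as written, the proposal is an unproved reduction, not a proof.
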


	\begin{theorem}(Borg \cite{Borg2020}).\label{th1.5}
	If $G \ncong C_3$ is a connected graph of order $n$, then $$\iota_c(G) \leq \frac{n}{4}.$$
    \end{theorem}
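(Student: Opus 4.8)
The plan is to induct on $n$. If $G$ is acyclic, then $\emptyset$ is a cycle isolating set, so $\iota_c(G)=0\le n/4$; this settles all forests, in particular every connected graph with $n\le 2$. Hence assume $G$ contains a cycle. Since the only connected graph of order at most $3$ that contains a cycle is $C_3$, which is excluded, we have $n\ge 4$.

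For the inductive step fix a vertex $v$ and let $B_1,\dots,B_s$ be the components of $G-N[v]$, each of order less than $n$. If $D_i$ is a cycle isolating set of $B_i$, then $\{v\}\cup\bigcup_i D_i$ is a cycle isolating set of $G$, so $\iota_c(G)\le 1+\sum_i\iota_c(B_i)$. Applying the induction hypothesis to each $B_i$ --- using $\iota_c(B_i)\le |V(B_i)|/4$ when $B_i\ncong C_3$ and $\iota_c(C_3)=1$ --- together with $n=|N[v]|+\sum_i|V(B_i)|$, a short computation shows that $\iota_c(G)\le n/4$ holds as soon as
\[
|N[v]|+\sum_{B_i\ \mathrm{acyclic}}|V(B_i)|\ \ge\ 4+\#\{\,i:B_i\cong C_3\,\}.
\]
Thus it suffices to produce a vertex $v$ satisfying this inequality. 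If $G$ is itself a cycle, every $v$ works, because then $|N[v]|=3$ and $G-N[v]=P_{n-3}$, so the left-hand side equals $n\ge 4$. Otherwise a shortest cycle $C$ of $G$ is induced and $G\ne C$, so some vertex $v\in V(C)$ has a neighbour off $C$, giving $|N[v]|\ge 4$; if moreover $G-N[v]$ has no component isomorphic to $C_3$, the inequality holds and we are done.

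The remaining case --- components of $G-N[v]$ isomorphic to $C_3$ --- is where I expect all the difficulty to lie: $C_3$ is the unique ``bad ratio'' graph, since $\iota_c(C_3)=1>\tfrac{3}{4}$, exactly as $K_k$ is for the proof of \ref{th1.4}. The plan is to select $v$ by an extremal rule, for instance minimizing the number of $C_3$-components of $G-N[v]$ over the vertices under consideration (which, by the displayed inequality, may legitimately include vertices with $|N[v]|<4$ whose removal creates large acyclic components), and then to perform a local analysis around a surviving triangle $T$: $T$ is joined to $N[v]$ by an edge, its endpoint on $T$ lies on a shortest cycle and has degree at least $3$, and in each resulting configuration one either re-roots at a suitable vertex of the connecting structure --- so that the acyclic components thereby created compensate for the triangles in the displayed inequality --- or replaces $\{v\}$ by a two-element starting set $S$ absorbing the offending triangles while keeping $|N[S]|$ large enough, with a finite check disposing of the small dense graphs. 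Tracking where these estimates are tight would, in addition, recover the characterization of the graphs attaining equality.
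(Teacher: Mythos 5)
This statement is quoted from Borg's paper \cite{Borg2020} and is not proved in the present paper, so your attempt can only be judged on its own terms. The set-up you give is sound: the reduction $\iota_c(G)\le 1+\sum_i\iota_c(B_i)$ is valid (the union of the acyclic leftovers of the components $B_i$ is acyclic), your displayed sufficient inequality
\[
|N[v]|+\sum_{B_i\ \mathrm{acyclic}}|V(B_i)|\ \ge\ 4+\#\{\,i:B_i\cong C_3\,\}
\]
is the correct bookkeeping of the induction (each $C_3$-component costs an extra $1/4$ because $\iota_c(C_3)=1>3/4$), and your dispatch of the cases ``$G$ acyclic'', ``$G$ a cycle'', and ``$|N[v]|\ge 4$ with no $C_3$-components'' is correct.

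However, the proof is not complete: the entire case in which $G-N[v]$ has $C_3$-components is left as a plan rather than an argument, and this case is precisely the technical core of Borg's theorem. Your own phrasing (``this is where I expect all the difficulty to lie'', ``The plan is to \dots'') concedes this. The suggested devices --- choosing $v$ to minimize the number of surviving triangles, re-rooting at a vertex of the connecting structure, or replacing $\{v\}$ by a two-vertex set $S$ --- are plausible in spirit, but none of them is executed, no configuration analysis is given, and the ``finite check'' of small dense graphs is not performed. The existence of extremal graphs attaining $\iota_c(G)=n/4$ (built by attaching triangles to a spanning structure) shows that this case cannot be waved away: every inequality in the surviving-triangle analysis must be tight, so a generic perturbation argument will not close it. As it stands, the proposal establishes the theorem only for graphs admitting a vertex $v$ on a shortest cycle with $\deg(v)\ge 3$ such that $G-N[v]$ has no $C_3$-component, which is a genuine gap.
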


	\begin{theorem}(Boyer and Goddard \cite{Boyer2024}).\label{th1.6}
	Every connected graph, except $K_2$ and $C_5$, can be partitioned into three disjoint isolating sets.
    \end{theorem}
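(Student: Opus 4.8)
\medskip
\noindent\textbf{Proof strategy.}
The plan is to induct on $n=|V(G)|$, using throughout the elementary reformulation that a set $D\subseteq V(G)$ is an isolating set of $G$ if and only if $N[D]$ is a vertex cover of $G$, that is, every edge of $G$ has an endpoint in $D$ or adjacent to $D$. For the base cases one checks the connected graphs of small order directly: $K_1$ is covered by the partition $(V(K_1),\emptyset,\emptyset)$ (the empty set is an isolating set of any edgeless graph), $K_2$ and $C_5$ are the stated exceptions, and the remaining finitely many small graphs are partitioned by inspection. For the inductive step let $n\ge 7$. Since $G$ is connected it has a non-cut vertex $v$ (for instance an endpoint of a longest path), so $G-v$ is connected with at least $6$ vertices and hence $G-v\notin\{K_2,C_5\}$; by the induction hypothesis $V(G-v)=D_1\sqcup D_2\sqcup D_3$ with each $D_i$ an isolating set of $G-v$.

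\medskip
It remains to reinsert $v$. Putting $v$ into a part only enlarges that part's closed neighborhood, so that part remains isolating; comparing $V(G)\setminus N_G[D_i]$ with $V(G-v)\setminus N_{G-v}[D_i]$ shows that a part $D_i$ \emph{not} receiving $v$ stays an isolating set of $G$ precisely when $D_i$ contains a neighbor of $v$, or every neighbor of $v$ has a neighbor in $D_i$. Hence, if $N_G(v)$ meets at least two of the three parts, put $v$ in a part it does not meet (any part, if it meets all three): then both other parts contain a neighbor of $v$ and all three parts are isolating in $G$, so we are done. The only obstruction is the case where $N_G(v)$ lies entirely in a single part, say $D_1$.

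\medskip
To deal with that case I would carry a strengthened induction hypothesis, namely that the partition of $G-v$ moreover has every closed neighborhood $N[x]$ meeting at least two of the three parts. When $v$ is a leaf with neighbor $u\in D_1$, this extra condition forces $u$ to have a neighbor in some $D_j$ with $j\ne1$, and then putting $v$ into the remaining third part makes all three parts isolating in $G$ and preserves the strengthened condition. (When $u$ has degree $2$ in $G$, with other neighbor $w$, one can alternatively delete the pendant pair $\{u,v\}$, recurse on $G-\{u,v\}$, and place $u$ and $v$ into two parts distinct from the one containing $w$; this is checked directly to yield an isolating partition of $G$.) The residual case, which I expect to be the main obstacle, is when $\delta(G)\ge2$ and the (at least two) neighbors of the deleted vertex are distributed inside $D_1$ in a way that neither the plain nor the strengthened hypothesis resolves: then one must choose the deleted object more cleverly --- a different non-cut vertex, an edge, or a short induced path --- or re-balance the three parts simultaneously. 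Finally, the finitely many small graphs for which a deletion would produce $K_2$ or $C_5$ are absorbed into a suitably enlarged list of base cases.
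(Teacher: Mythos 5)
This statement is quoted by the paper from Boyer and Goddard \cite{Boyer2024}; the paper itself contains no proof of it, so your proposal has to stand on its own. On its own terms it is not yet a proof but a strategy in which the central difficulty is explicitly left open. The routine half of your argument is sound: deleting a non-cut vertex $v$ of a connected $G$ with $n\ge 7$, inheriting a partition of $G-v$, and reinserting $v$ when $N_G(v)$ meets at least two of the three parts. But the entire weight of the theorem sits in the case $N_G(v)\subseteq D_1$, and neither of your proposed fixes closes it. The strengthened hypothesis (``every $N[x]$ meets at least two parts'') only gives each neighbor $u\in D_1$ of $v$ a neighbor in $D_2$ \emph{or} in $D_3$; to keep the part that does not receive $v$ isolating, you need \emph{all} neighbors of $v$ to have a neighbor in the \emph{same} other part, and when $d(v)\ge 2$ their second colors can split between $D_2$ and $D_3$. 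You name exactly this residual case and then defer it to ``choose the deleted object more cleverly \ldots or re-balance the three parts simultaneously,'' which is precisely where the actual proof has to live. This is a genuine gap, not a detail: local reinsertion arguments of this kind are exactly what the exceptional graph $C_5$ obstructs, and some global or structural input is unavoidable.

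There are secondary unproved commitments as well. The strengthened induction hypothesis is itself a stronger theorem whose truth you never establish: you would need to show that \emph{every} admissible connected graph has an isolating $3$-partition in which each closed neighborhood meets two parts, to verify this for all base cases, and to check that it is preserved by each of your surgeries (in the naive partition of a star $K_{1,n}$, for instance, a leaf placed in the same part as the center violates it, so the partition must be chosen with care). The pendant-pair deletion also creates host graphs of order $7$ (namely $C_5$ with a pendant $P_2$ attached) that reduce to the exception $C_5$, so the ``enlarged list of base cases'' is not confined to $n\le 6$ and would have to be identified and checked explicitly. In short, the framework is reasonable, but the case that makes the theorem nontrivial remains unresolved.
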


Note that Theorem \ref{th1.3} can be regard as a corollary of Theorem \ref{th1.6}. So, we would like to propose the following two conjectures, and if they are true, then they will imply Theorems \ref{th1.4} and \ref{th1.5}, respectively.

	\begin{conjecture}\label{conj1.7}
	Let $k \geq 3$ be an integer. Every connected graph, except $K_k$, can be partitioned into $k+1$ disjoint $k$-clique isolating sets.
    \end{conjecture}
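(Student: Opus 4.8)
The plan is to recast the partition problem as a coloring problem, then attack it by combining an extremal recoloring argument on the ``generic'' part of $G$ with a structural reduction on the rigid part, the latter being where the already-settled bounded-degree case enters.

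First I would record a reformulation. A set $D$ fails to be a $k$-clique isolating set exactly when some copy $Q$ of $K_k$ avoids $N[D]$; since $N[V(Q)]=\bigcup_{q\in V(Q)}N[q]$, one checks that $D$ is a $k$-clique isolating set if and only if $D\cap N[V(Q)]\neq\emptyset$ for every $K_k$ subgraph $Q$. Hence a partition of $V(G)$ into $k+1$ $k$-clique isolating sets is precisely a coloring $c\colon V(G)\to[k+1]$ for which every closed neighborhood $N[V(Q)]$ contains a vertex of each of the $k+1$ colors --- a \emph{polychromatic} $(k+1)$-coloring of the hypergraph whose edges are the closed neighborhoods of the $k$-cliques. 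This viewpoint also explains the exception: a copy of $K_k$ with empty external neighborhood $N(V(Q))$ forces $N[V(Q)]=V(Q)$, a set of only $k$ vertices that can never carry $k+1$ colors; and in a connected graph $G\ncong K_k$ every $K_k$ has an external neighbor, so $|N[V(Q)]|\geq k+1$ always.

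For the generic part I would run an extremal argument. Choose a $(k+1)$-coloring $c$ maximizing the number of pairs $(Q,i)$ for which color $i$ occurs in $N[V(Q)]$. If $c$ is not already a solution, some $K_k$, say $Q_0$, misses a color $i_0$ in $N[V(Q_0)]$; since $|N[V(Q_0)]|\geq k+1$ while one color is absent, some other color $j$ is repeated there, so I may pick $x\in N[V(Q_0)]$ with $c(x)=j$ such that another $j$-vertex still lies in $N[V(Q_0)]$. Recoloring $x$ to $i_0$ strictly helps $Q_0$, and the only possible harm is to cliques $Q$ with $x\in N[V(Q)]$ for which $x$ was the \emph{unique} $j$-vertex of $N[V(Q)]$. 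The argument closes whenever $x$ can be chosen so that its color is redundant in every clique-neighborhood containing it, contradicting maximality.

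The main obstacle is exactly the failure of that choice, which occurs at \emph{tight} cliques --- those $Q$ with very few external neighbors, where no vertex of $N[V(Q)]$ is redundant and the colors are forced into a rigid near-bijection. Here I would use structure rather than recoloring. If $|N(V(Q))|=1$, its lone external neighbor is a cut vertex and $Q$ lies in a pendant block, so I would peel such end-blocks off, color the smaller (still connected, still $\ncong K_k$) graph by induction on $|V(G)|$, and extend across the cut vertex onto the pendant $K_k$ using the slack of $k+1$ available vertices; one must check that deleting the block neither disconnects $G$ nor creates a spurious $K_k$-component, and, using that a $K_k$ is $2$-connected for $k\geq3$ and so cannot use vertices from both sides of a cut vertex, that no clique is split by the surgery. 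After these reductions the residue is $2$-connected, whence every clique has at least two external neighbors; the genuinely hard remaining regime is the low-degree, densely overlapping one in which all clique-neighborhoods are nearly tight, and this is precisely the situation addressed by the bounded-degree case $\Delta(G)\le k$ established earlier in this paper. The crux of a complete proof --- and the reason the general statement remains a conjecture --- is to reconcile the extremal recoloring available on the slack part with the rigid, interlocking rainbow constraints imposed by overlapping tight cliques in the $2$-connected low-degree core, where neither pendant-block peeling nor a single local recoloring step applies.
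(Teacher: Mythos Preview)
The statement you are attempting is \emph{Conjecture~\ref{conj1.7}} in the paper, and the paper does not prove it; it is posed as an open problem. What the paper does prove is the special case $\Delta(G)\le k$ (Theorem~\ref{th2.1}), via a minimal-counterexample argument built from local claims (non-separating $K_k$'s, induced $K_k^+$'s, absence of double cliques $DK_k$) that exploit the degree bound in an essential way. So there is no ``paper's own proof'' to compare against, and your write-up is, by your own admission in its final sentence, a strategy sketch rather than a proof.

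On the substance of your outline: the reformulation as polychromatic $(k{+}1)$-coloring of the hypergraph of clique closed neighborhoods is correct and useful, and the extremal recoloring heuristic is a natural first move. The real gap is the transition from your structural reductions to the bounded-degree theorem. Peeling pendant $K_k$-blocks (those with $|N(V(Q))|=1$) is fine, but after that you assert that the ``genuinely hard remaining regime'' is the low-degree one covered by Theorem~\ref{th2.1}. This does not follow: a $2$-connected graph in which every $K_k$ has at least two external neighbors can still have vertices of arbitrarily large degree, and the tightness of individual clique neighborhoods says nothing about $\Delta(G)$. Concretely, take a vertex $v$ adjacent to many otherwise-disjoint copies of $K_{k-1}$, and add a second common neighbor to make the graph $2$-connected; each resulting $K_k$ is tight, yet $\Delta$ is unbounded and Theorem~\ref{th2.1} does not apply. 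Your recoloring step also does not close in this regime, since the repeated-color vertex $x$ you pick may be the unique witness of color $j$ for many overlapping clique neighborhoods simultaneously. In short, neither branch of your plan---extremal recoloring on the slack part, nor reduction to the $\Delta\le k$ theorem on the rigid part---currently reaches a contradiction, which is consistent with the statement's status as a conjecture.
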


\begin{conjecture}\label{conj1.8}
	Every connected graph, except $C_3$, can be partitioned into four disjoint cycle isolating sets.
\end{conjecture}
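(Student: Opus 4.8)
The plan is to prove the statement exactly as phrased — a partition of every connected graph $G \ncong C_3$ into four sets $D_1,D_2,D_3,D_4$, each a cycle isolating set — by first reformulating the isolating condition and then arguing inductively on $|V(G)|$. The key reformulation is that $D_i$ is a cycle isolating set precisely when $V(G)\setminus N[D_i]$ induces a forest, i.e. when $N[D_i]$ is a transversal of all cycles of $G$. Thus I must four-colour $V(G)$ so that, for each colour $i$, deleting the closed neighbourhood of colour class $i$ leaves no cycle. The target of four classes is exactly the one forced by Theorem~\ref{th1.5}: a balanced such partition would hand back the bound $\iota_c(G)\le n/4$, so four is the smallest count one can hope for in general, and the single exception $C_3$ matches the exception in that theorem.

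A first structural reduction is to pass to the block--cut-tree. Every cycle of $G$ lies inside a single block (maximal $2$-connected subgraph), and any non-cut vertex has all its neighbours in its own block, so the forest condition on $G-N[D_i]$ decomposes into a condition on each block separately, coupled only through the cut vertices shared between blocks. I would therefore build the four-colouring block by block along a rooted block--cut-tree, colouring each cut vertex when its parent block is handled and passing that colour down as a boundary constraint to the child blocks. Pendant trees, and more generally any acyclic part of $G$, impose no isolating constraint at all and need only be coloured consistently so as not to spoil domination of the adjacent $2$-connected pieces.

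The constructive engine on a single $2$-connected block would be an induction in the spirit of the proof of Theorem~\ref{th1.5}: select a vertex $v$ on a shortest cycle, delete $N[v]$ (or a slightly larger reducible configuration), apply the inductive hypothesis to the connected pieces that remain, and then re-insert $N[v]$ while assigning its vertices to the four classes. A sufficient condition worth keeping in reserve is that if all four classes are dominating sets then $N[D_i]=V(G)$ for every $i$ and the isolating condition holds automatically; this links the problem to the domatic number and shows why high minimum degree is helpful. The role of the deleted closed neighbourhood is to absorb the vertices created by low-degree obstructions, so that the residual graph has enough degree to admit a four-way assignment.

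The main obstacle — and the reason the statement stands only as a conjecture — is maintaining all four isolating conditions simultaneously through the inductive step. Reattaching a deleted configuration can create new cycles that must be pierced by $N[D_i]$ for every $i$ at once, while the colours of the reattached vertices are constrained on all four sides; this is a genuinely four-dimensional matching problem rather than the one-sided greedy choice available when proving a single isolation bound such as Theorem~\ref{th1.5}. Vertices of degree at most two (and pendant paths) are the crux: they obstruct four disjoint dominating sets, so near them the weaker forest condition must be exploited by hand, and it is precisely the dense $2$-connected graphs with many small separators where it is hardest to certify that four pairwise-disjoint classes can each still hit every cycle. I would expect the argument to require a careful catalogue of small base cases (parallel to the exceptions $K_2$ and $C_5$ in the $K_2$-version, Theorem~\ref{th1.6}) together with a discharging or local exchange argument to repair colourings that violate the isolating condition in a single block.
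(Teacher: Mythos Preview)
The statement you are attempting is Conjecture~\ref{conj1.8}, and the paper does \emph{not} prove it. The authors state explicitly in the closing remarks that they ``would like to prove two results of isolation partition (namely, Conjectures~\ref{conj1.7} and~\ref{conj1.8}), but failed. The obstacle of our proofs is the maximum degree of a graph.'' What the paper establishes instead is only the special case of connected claw-free subcubic graphs (Theorem~\ref{th1.11}), via a minimal-counterexample argument that first rules out cycles of length divisible by~$4$ and induced cycles of every length at least~$4$, then eliminates the configurations $DK_3$ and $K_3^+$, forcing an induced claw at any degree-$3$ vertex and hence a contradiction.

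Your proposal is likewise not a proof but a strategic outline, and you yourself flag the core difficulty: maintaining all four isolating conditions simultaneously through an inductive reattachment step is a genuinely four-sided constraint, not the one-sided choice that suffices for the bound in Theorem~\ref{th1.5}. That diagnosis matches the paper's own admission of failure, and there is no proof in the paper against which to compare your sketch. Your block--cut-tree reduction and shortest-cycle deletion idea are reasonable starting points, but neither appears in the paper's partial argument; the claw-free and subcubic hypotheses there are exploited precisely to pin down how cycles can attach to the rest of the graph, sidestepping the general reattachment problem you correctly identify as the obstacle. In short, the gap is not a flaw in your reasoning but the fact that the conjecture is open: no complete argument exists, in the paper or in your proposal.
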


As for Conjectures \ref{conj1.7} and \ref{conj1.8}, we shall prove the following results in this note.

\begin{theorem}\label{th1.9}
	Let $k \geq 3$ be an integer. Every connected graph $G$ with $\Delta(G) \leq k$, except $K_k$, can be partitioned into $k+1$ disjoint $k$-clique isolating sets.
\end{theorem}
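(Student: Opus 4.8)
The plan is to prove Theorem~\ref{th1.9} by considering two cases according to whether $k \geq 4$ or $k = 3$, and in each case exploiting the degree restriction $\Delta(G) \leq k$. The crucial structural observation is that, when $\Delta(G) \leq k$, a $k$-clique $Q$ in $G$ is an extremely rigid object: every vertex of $Q$ already has $k-1$ neighbors inside $Q$, so it has at most one neighbor outside $Q$. Consequently the copies of $K_k$ in $G$ are almost disjoint, and a single well-chosen vertex dominates a whole $k$-clique. This is what makes the bound $k+1$ attainable even though the analogous statement for general graphs is only conjectured.

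First I would record the easy reformulation: a partition $V(G) = D_1 \cup \cdots \cup D_{k+1}$ into $k+1$ classes is a partition into $k$-clique isolating sets precisely when, for every copy $Q$ of $K_k$ in $G$ and every $i \in [k+1]$, we have $N[D_i] \cap V(Q) \neq \emptyset$, i.e. each class meets $V(Q) \cup N(V(Q))$. Since $|V(Q) \cup N(V(Q))| \le k + k = 2k$ in the worst case (and often much less), while we only have $k+1$ classes, a counting/greedy argument becomes plausible. The natural first reduction is: if $G$ itself contains no $K_k$, then \emph{any} partition of $V(G)$ into $k+1$ nonempty classes works, so we may assume $G$ contains at least one $k$-clique; and since $G \ncong K_k$ and $G$ is connected, some $k$-clique $Q$ has a vertex with a neighbor outside $Q$.

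The main work, and the expected obstacle, is to produce the partition globally rather than clique-by-clique, since the closed neighborhoods of distinct $k$-cliques can overlap and a naive local choice may fail to extend. I would handle this by building the partition greedily along a spanning structure of $G$ (for $k = 3$, a BFS/DFS tree; for $k \ge 4$, exploiting that distinct $k$-cliques share at most an edge or are vertex-disjoint, so one can first deal with each clique using its private attachment vertices and then colour the remaining forest-like part freely). Concretely: process the $k$-cliques in some order; for each clique $Q$ not yet ``hit'' by all $k+1$ classes, use the (at least one) edge leaving $Q$ to borrow an external dominator, and assign colours to $V(Q)$ together with that dominator so that all of $[k+1]$ appear on $V(Q) \cup \{$dominator$\}$ — there is room because $|V(Q)| = k$ and we add at least one more vertex. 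The delicate point is consistency: a vertex may lie in two cliques, or be the external dominator for one clique while lying in another, so I would need a short argument (likely an exchange argument, or an ordering in which each clique has a vertex not used by previously processed cliques) showing these local assignments never conflict, after which the leftover vertices — which lie in no $k$-clique — are distributed arbitrarily among the classes to keep all $k+1$ of them nonempty. The case $k = 3$ is where this is most constrained, since triangles are the smallest cliques and can cluster, so I expect the bulk of the proof to be a careful analysis of how triangles attach to each other in a connected subcubic-or-below graph.
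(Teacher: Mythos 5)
Your opening reformulation is correct and worth stating: a partition into $k+1$ classes works if and only if every class meets $N[V(Q)]$ for every $k$-clique $Q$ of $G$, and under $\Delta(G)\le k$ each vertex of $Q$ has at most one neighbour outside $Q$, so $|N[V(Q)]|\le 2k$. The reduction to the case where $G$ contains a $K_k$ is also fine. But the proposal stops exactly where the proof begins. The entire difficulty is the consistency step that you defer to ``a short argument (likely an exchange argument, or an ordering\dots)'': in the tightest case a $k$-clique $Q$ sends only one edge to the rest of the graph, so $|N[V(Q)]|=k+1$ and every colour must appear \emph{exactly once} on $V(Q)$ together with its unique external neighbour $v$; if $v$ is itself a vertex of another $k$-clique, or the external dominator already committed to another clique, the greedy pass has no freedom left and genuinely can fail. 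The paper resolves precisely these interactions not by a greedy ordering but by a minimal-counterexample argument with a sequence of structural claims (every $K_k$ is separating; every $K_k$ with a pendant vertex is induced; no two $k$-cliques joined by an edge; hence no $K_k$ at all), each established by deleting the offending configuration, colouring the remainder by minimality, and re-gluing via a lemma controlling how many edges leave the uncoloured components. None of that machinery, nor a substitute for it, appears in your sketch.

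There is also a concrete false step in your structural setup for $k\ge 4$. You claim distinct $k$-cliques ``share at most an edge or are vertex-disjoint''; in fact, under $\Delta(G)\le k$, if two distinct $k$-cliques share $j\ge 1$ vertices then each shared vertex has at least $2(k-1)-(j-1)=2k-1-j$ neighbours, forcing $j\ge k-1$. So two distinct $k$-cliques are either vertex-disjoint or overlap in $k-1$ vertices, forming $K_{k+1}$ minus an edge --- a much larger overlap than an edge, and one your ``first deal with each clique using its private attachment vertices'' plan does not accommodate. (For $k=3$ the statement ``share at most an edge'' happens to be right, but that is the case you yourself flag as the most constrained and leave unresolved.) As written, the proposal is a plausible plan with its central lemma unproved and one of its supporting structural claims incorrect, so it does not constitute a proof.
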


\begin{corollary}
	Every $k$-regular graph can be partitioned into $k+1$ disjoint $k$-clique isolating sets.
\end{corollary}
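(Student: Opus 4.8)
The plan is to derive the corollary from Theorem~\ref{th1.9} by treating $G$ one connected component at a time (with $k\ge 3$, inheriting the hypothesis of Theorem~\ref{th1.9}). First I would note that if $G$ is $k$-regular, then every connected component $H$ of $G$ satisfies $\Delta(H)=k$, so in particular $\Delta(H)\le k$; and $H\not\cong K_k$, because $K_k$ is $(k-1)$-regular whereas every vertex of $H$ has degree $k$. Hence Theorem~\ref{th1.9} applies to each component $H$ and yields a partition $V(H)=V_1^{H}\cup\cdots\cup V_{k+1}^{H}$ into $k+1$ pairwise disjoint $k$-clique isolating sets of $H$.

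Next I would glue these partitions together: for each $i\in[k+1]$ set $V_i:=\bigcup_{H}V_i^{H}$, the union over all components $H$ of $G$. These $k+1$ sets are pairwise disjoint and partition $V(G)$. To see that each $V_i$ is a $k$-clique isolating set of $G$, observe that there are no edges between distinct components, so $N_G[V_i]\cap V(H)=N_H[V_i^{H}]$ for every component $H$; consequently the subgraph of $G-N_G[V_i]$ induced on $V(H)$ is exactly $H-N_H[V_i^{H}]$, which contains no $K_k$. Since any $K_k$-subgraph of $G$ lies inside a single component, it follows that $G-N_G[V_i]$ contains no $K_k$, i.e., $V_i$ is a $k$-clique isolating set of $G$. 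This produces the desired partition of $G$ into $k+1$ disjoint $k$-clique isolating sets.

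There is no real obstacle here: the argument is a routine reduction to Theorem~\ref{th1.9}. The only two points that need a line of justification are (i) that no component of a $k$-regular graph can be a copy of $K_k$, which is what licenses applying Theorem~\ref{th1.9} to each component, and (ii) that $k$-clique isolating sets of the components combine into a $k$-clique isolating set of the whole graph, which is immediate from the fact that both closed neighbourhoods and $K_k$-subgraphs respect the component decomposition. If one wishes to be exhaustive, one may also remark that each $k$-regular component automatically has at least $k+1$ vertices, so the resulting partition into $k+1$ parts is nondegenerate.
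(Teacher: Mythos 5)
Your argument is correct and is essentially the derivation the paper intends: a $k$-regular graph has maximum degree $k$ and no component isomorphic to $K_k$ (as $K_k$ is $(k-1)$-regular), so Theorem~\ref{th1.9} applies to each component. Your component-gluing step is exactly the content of Lemma~\ref{lem2.3} (equivalently Corollary~2.4), so you could simply cite it instead of reproving it.
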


\begin{theorem}\label{th1.11}
	Every connected claw-free subcubic graph, except $C_3$, can be partitioned into four disjoint cycle isolating sets.
\end{theorem}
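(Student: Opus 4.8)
\emph{Reformulation and trivial cases.} For any $D\subseteq V(G)$ the graph $G-N[D]$ is a forest precisely when $N[D]$ meets every cycle of $G$, so a partition $V(G)=V_1\cup V_2\cup V_3\cup V_4$ is as required exactly when each $N[V_i]$ is a transversal of the family of (vertex sets of) cycles. If $G$ is a forest this is automatic; if $G=C_n$ with $n\ge 4$ one splits $V(G)$ into four nonempty parts, since deleting the closed neighbourhood of a nonempty subset of a cycle leaves a union of paths; and $K_4$ is handled by four singletons, each of which dominates it. So from now on $G$ has a vertex of degree $3$ and $G\ne K_4$, and by claw-freeness every degree-$3$ vertex lies in a triangle.

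\emph{Structural skeleton.} I would first pin down the shape of $G$. Every degree-$3$ vertex lies in a triangle; two triangles sharing an edge form a \emph{diamond} $K_4-e$, and since $G\ne K_4$ one checks (a short but somewhat tedious case analysis, using that the two degree-$3$ vertices of a diamond have all of their neighbours inside it) that distinct diamonds and the remaining \emph{lone} triangles are pairwise vertex-disjoint. Every vertex outside these ``gadgets'' has degree at most $2$, so the graph obtained by deleting all gadget vertices is a disjoint union of paths; hence every cycle of $G$ meets at least one gadget. Finally, in a diamond $\{p,q,r,s\}$ with $pq\notin E(G)$, the two degree-$3$ vertices $r,s$ lie on \emph{every} cycle of $G$ that meets the diamond, and $r,s$ are adjacent to each of $p,q,r,s$.

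\emph{Diamonds for free, then induction.} Colour every diamond by $r\mapsto 1$, $s\mapsto 2$, $p\mapsto 3$, $q\mapsto 4$. If a cycle $C$ meets such a diamond then $C$ contains $r$ or $s$, say $r\in C$; since $r$ is adjacent to $s,p,q$, the vertex $r$ lies in $N[V_1]\cap N[V_2]\cap N[V_3]\cap N[V_4]$, so all four classes meet $C$. It therefore suffices to $4$-colour $G':=G-\{\text{diamond vertices}\}$ so that every cycle of $G'$ is met by all four classes; $G'$ is claw-free, subcubic and diamond-free, so each of its triangles is a lone triangle with one, two or three ``ports'' (vertices having a neighbour outside the triangle), and these triangles are joined by paths. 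I would prove this by induction on $|V(G')|$, componentwise: pendant vertices are dropped freely since they lie on no cycle; with minimum degree $\ge 2$, locate a peripheral lone triangle $T$ (one in a leaf block of $G'$), delete $T$ together with the pendant piece it anchors, colour the remainder by induction, and extend. The only new cycles are $T$ itself and ``big'' cycles running through $T$; because $T$'s three vertices can absorb any three colours and its closed neighbourhood has at most six vertices, there is room to place the remaining colour on a port's external neighbour, and an apex-adjacency observation — a cycle through $T$ misses at most one vertex of $T$, which is adjacent to the two it contains — makes the three colours sitting on $T$ automatically meet every cycle through $T$.

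\emph{The main obstacle.} The crux is this last extension step. Deleting a peripheral triangle can leave a degenerate graph (a single triangle, a short cycle, or a disconnected union), so the base cases must be enumerated explicitly; more seriously, one has no a priori control over the colour the induction has assigned near the cut vertex of $T$, and if the ``missing'' colour is already blocked there, the local colouring of $T$ and its pendant piece cannot be completed as described. I expect the resolution to require a finer catalogue of leaf blocks in claw-free subcubic diamond-free graphs — each is a triangle with a single port, a cycle $C_{\ge 4}$, or a short ``necklace'' of triangles — with an ad hoc (and possibly larger) peeling in each case chosen so that the needed colour is free; verifying this catalogue, and checking that every big cycle through a lone triangle is met by all four classes (the one place where the fourth colour, absent from the triangle itself, must be produced on a linking path), is where the real work lies.
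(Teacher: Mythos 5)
Your setup is sound and partly attractive: the reformulation (each $N[V_i]$ must meet every cycle), the reduction of degree-$3$ vertices to triangles via claw-freeness, and especially the diamond observation (colour a $K_4-e$ with all four colours; every cycle meeting it passes through a degree-$3$ vertex of the diamond, which lies in all four closed neighbourhoods) are all correct. The paper achieves the same effect differently — its Claim 3.1 shows a minimal counterexample has no cycle of length divisible by $4$, which in particular excludes diamonds — but your version of that step is fine. However, the proof is not complete, and you say so yourself: the entire difficulty of the theorem is concentrated in the step you describe only as an expectation, namely extending an inductively obtained colouring across a lone triangle $T$ and its attached paths. As you correctly note, three colours placed on $T$ automatically meet every cycle through $T$ (such a cycle contains at least two vertices of $T$, and the third is adjacent to both), but the fourth colour must be produced in the closed neighbourhood of \emph{every} such cycle from outside $T$, and you have no control over which colours the induction has placed near the attachment vertices. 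Declaring that ``a finer catalogue of leaf blocks'' with ``an ad hoc peeling'' should resolve this is not a proof; it is exactly the point where a naive peeling argument breaks, because the long cycles through $T$ can have any length and the blocked colour can be forced.

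For comparison, the paper resolves this by a minimal-counterexample argument whose whole content is that control: it eliminates induced cycles of each residue class of length modulo $4$ one at a time (Claims 3.1--3.6), using tailored periodic colourings such as repeating $1,2,3,4$ with a carefully placed extra $3$ or $3,4$ and local colour swaps near the attachment vertices, together with a gluing lemma (Lemma 3.2) requiring each uncovered component of the coloured part to send at most one edge outside. Only after all long induced cycles are excluded does it handle the triangle configurations ($K_3$ separating, no $DK_3$, no $K_3^+$), at which point claw-freeness yields the contradiction. Your proposal would need an argument of comparable precision for the lone-triangle extension — including the mod-$4$ bookkeeping on the linking paths and explicit base cases — before it could be considered a proof; as written there is a genuine gap at the main inductive step.
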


\section {\large Clique isolation partitions}\label{sec2}

In this section, we prove Theorem \ref{th1.9}. We first define the following coloring.

	\begin{itemize}
	\item A {\it $(k+1)$-$k$-CI-coloring} of a graph $G$ is a mapping $c$ from $V(G)$ to the color set $[k+1]$ such that each color class is a $k$-clique isolating set of $G$.
	
	\item A graph $G$ is said to be {\it $(k+1)$-$k$-CI-colorable} if it admits a $(k+1)$-$k$-CI-coloring.
    \end{itemize}

To prove Theorem \ref{th1.9}, we only need to prove the following theorem.
\begin{theorem}\label{th2.1}
	Let $k \geq 3$ be an integer. Every connected graph $G$ with $\Delta(G) \leq k$, except $K_k$, is $(k+1)$-$k$-CI-colorable.
\end{theorem}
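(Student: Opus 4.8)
The plan is to restate Theorem~\ref{th2.1} as a colouring problem and prove it by induction on $|V(G)|$, peeling off one ``clique group'' at a time. First, the \emph{reformulation}: a map $c\colon V(G)\to[k+1]$ is a $(k+1)$-$k$-CI-colouring exactly when, for every $k$-clique $K$ of $G$ and every colour $i$, some vertex of $N[K]$ has colour $i$; that is, exactly when $c(N[K])=[k+1]$ for every $k$-clique $K$ (call such an $N[K]$ \emph{rainbow}). Indeed, a set $D$ is a $k$-clique isolating set iff $D\cap N[K]\neq\emptyset$ for every $k$-clique $K$, since $G-N[D]$ contains the $k$-clique $K$ precisely when $K\cap N[D]=\emptyset$, and $K\cap N[D]=\emptyset\iff D\cap N[K]=\emptyset$. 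Since $G$ is connected and $G\ncong K_k$, no $k$-clique is a component of $G$, so $|N[K]|\geq k+1$ for every $k$-clique $K$; this is the only place the hypothesis $G\ncong K_k$ enters, and it is exactly what makes a rainbow $N[K]$ possible at all.

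Next, the \emph{structure of the $k$-cliques}, where $\Delta(G)\leq k$ and $k\geq 3$ are essential. If $K_1\neq K_2$ are $k$-cliques sharing a vertex $v$, then $v$ is adjacent to all $|K_1\cup K_2|-1=2k-|K_1\cap K_2|-1$ other vertices of $K_1\cup K_2$, whence $2k-|K_1\cap K_2|-1\leq k$, i.e.\ $|K_1\cap K_2|=k-1$. Writing $K_1=S\cup\{t_1\}$, $K_2=S\cup\{t_2\}$ with $|S|=k-1$: if $t_1\sim t_2$ then $S\cup\{t_1,t_2\}$ induces a $K_{k+1}$ whose vertices already have degree $k$, so $G\cong K_{k+1}$ (a case disposed of directly below); otherwise $t_1\not\sim t_2$, every vertex of $S$ has degree exactly $k$ with all its neighbours inside $D:=S\cup\{t_1,t_2\}\cong K_{k+1}-t_1t_2$, and one checks that $K_1,K_2$ are the only $k$-cliques meeting $D$. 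Consequently the $k$-cliques of $G$ split into pairwise vertex-disjoint \emph{groups}: a group is either a single $k$-clique $K$ that no other $k$-clique meets (\emph{solitary}), or a set $D$ as above (a \emph{diamond}). In particular no $k$-clique of $G$ has vertices both inside and outside a fixed group $U$, so the $k$-cliques of $G$ are exactly those of $G-U$ together with the one or two cliques contained in $U$.

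For the \emph{induction}: if $G$ has no $k$-clique, colour it monochromatically; if $G\cong K_{k+1}$ or $G\cong K_{k+1}-e$, colour its vertices bijectively with $[k+1]$, so every relevant $N[\cdot]$ is all of $V(G)$ or contains a diamond, hence rainbow. Otherwise contract every clique group to a single vertex, obtaining a connected graph $H$ with $|V(H)|\geq 2$ (else $G\cong K_k$ or $K_{k+1}-e$, already treated), and take a non-cut vertex $x$ of $H$. If $x$ is an ordinary vertex $v$ (lying in no $k$-clique), then $G-v$ is connected with $\Delta\leq k$: if $G-v\cong K_k$ then $G$ is $K_k$ plus one vertex joined by at most $k-2$ edges, coloured by hand; otherwise colour $G-v$ by induction and give $v$ any colour — legitimate since $v$ is in no $k$-clique, so enlarging a colour class by $v$ keeps it isolating and creates no new clique. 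If $x$ is a group $U$, then $G-U$ is connected and non-empty: if $G-U\cong K_k$, the disjointness of groups forces $G$ to consist of $U$ and one further solitary $k$-clique joined to it by a partial matching of at most $2$ edges (if $U$ is a diamond) or at most $k$ edges (if $U$ is solitary), again coloured by hand; otherwise colour $G-U$ by induction to obtain $c'$ and extend $c'$ over $U$. The extension only has to make $N[Q]$ rainbow for the one or two cliques $Q\subseteq U$: if $U$ is a diamond, colour its $k+1$ vertices bijectively with $[k+1]$, so $c(N[Q])\supseteq c(U)=[k+1]$; if $U=K$ is solitary, then $N[K]=K\cup W$ with $\emptyset\neq W\subseteq V(G-U)$ already coloured, and since $|[k+1]\setminus c'(W)|\leq k=|K|$ we may colour the $k$ vertices of $K$ so that every colour missing from $W$ occurs on $K$, making $N[K]$ rainbow. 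No $k$-clique outside $U$ is affected, as colour classes and closed neighbourhoods only grow when $U$ is restored.

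The main obstacle is the structural dichotomy ``solitary clique or diamond'' for the $k$-cliques, and pinning down that the induction breaks only when $G-U\cong K_k$ (resp.\ $G-v\cong K_k$) — the genuine exception $K_k$ — and that this forces $G$ into the short explicit list above that must then be coloured directly; once this list is settled the extension step is routine. A secondary point to keep straight is that contracting the groups really does yield a connected $H$ in which every $k$-clique of $G$ is accounted for (this is exactly the ``no straddling clique'' consequence of the structural lemma).
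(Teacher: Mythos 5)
Your proposal is correct, but it is organized quite differently from the paper's argument, and the comparison is worth recording. The paper also works with a minimum counterexample and peels off clique structures, but it proceeds by a chain of forbidden-configuration claims --- every $K_k$ is separating (Claim \ref{claim2.1}), every $K_k^+$ is induced (Claim \ref{claim2.2}), no $DK_k$ (Claim \ref{claim2.3}), hence no $K_k$ at all (Claim \ref{claim2.4}) --- and at each stage it must argue directly about which single $k$-clique could survive in $G-N[D_i]$ after gluing the colourings. You instead front-load two ideas the paper does not isolate: (i) the \emph{rainbow reformulation} ($c$ is a $(k+1)$-$k$-CI-colouring iff $c(N[K])=[k+1]$ for every $k$-clique $K$), which makes every extension step monotone and hence trivial to verify, since closed neighbourhoods and colour sets only grow when vertices are restored; and (ii) the \emph{structural dichotomy} that under $\Delta(G)\le k$ two intersecting $k$-cliques must share $k-1$ vertices and form either $K_{k+1}$ (forcing $G\cong K_{k+1}$) or a ``diamond'' $K_{k+1}-e$ all of whose interior vertices are saturated, so that the $k$-cliques organize into pairwise disjoint, non-straddled groups. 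This lets you contract groups, delete a non-cut node, and induct cleanly; the paper's separating/induced/$DK_k$ claims are, in effect, the shadows of your group decomposition seen from inside a minimal counterexample (note that its Claim \ref{claim2.2} is exactly what rules out your diamonds there, so the two proofs even partition the configurations differently). Your route is somewhat longer to set up but more systematic and easier to audit; its only soft spots are the ``coloured by hand'' exceptional graphs ($K_k$ plus a vertex on at most $k-2$ edges, and two groups joined by a partial matching), for which explicit rainbow colourings do exist and should be written out --- e.g.\ for two solitary $K_k$'s joined by a matching $u_1v_1,\dots,u_jv_j$, take $c(u_i)=i$, $c(v_1)=k+1$, and let $v_2,\dots,v_k$ cover $\{j+1,\dots,k\}$ --- but none of these verifications presents any difficulty, so there is no genuine gap.
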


The following two lemmas are useful for the proof of Theorem \ref{th2.1}.

\begin{lemma}\label{lem2.2}
	Let $G$ be a connected graph and $S \subseteq V(G)$. If $G-S$ admits a $(k+1)$-$k$-CI-coloring $\left.c\right|_{G-S}$, and $G[S]$ admits a $(k+1)$-$k$-CI-coloring $\left.c\right|_{S}$ such that for each color class $D_S$ of $G[S]$, each component of $G[S]-N_{G[S]}[D_S]$ joins to $V(G)\setminus S$ by at most $k-2$ edges (especially, including the case of $G[S]-N_{G[S]}[D_S] \cong (\emptyset,\emptyset)$), then $c:=\left.c\right|_{S} \cup \left.c\right|_{G-S}$ is a $(k+1)$-$k$-CI-coloring of $G$, and thus, $G$ is $(k+1)$-$k$-CI-colorable.
\end{lemma}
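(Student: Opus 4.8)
The plan is to verify directly that the combined coloring $c = \left.c\right|_S \cup \left.c\right|_{G-S}$ is a $(k+1)$-$k$-CI-coloring of $G$, i.e., that each color class $D$ of $c$ is a $k$-clique isolating set of $G$. Fix a color $i \in [k+1]$, write $D_S$ for the $i$-th color class of $\left.c\right|_S$ and $D_{G-S}$ for the $i$-th color class of $\left.c\right|_{G-S}$, so that $D = D_S \cup D_{G-S}$. Suppose for contradiction that $G - N_G[D]$ contains a $k$-clique $Q$. The first step is to locate $Q$ relative to the partition $V(G) = S \,\dot\cup\, V(G)\setminus S$: since $Q \cap N_G[D] = \emptyset$ and in particular $Q$ avoids $N_G[D_{G-S}]$ and $N_G[D_S]$, the vertices of $Q$ lying in $V(G)\setminus S$ form a clique disjoint from $N_{G-S}[D_{G-S}]$, and because $G - S$ is $k$-clique-isolated by $D_{G-S}$, this forces $Q \not\subseteq V(G)\setminus S$; symmetrically $Q \not\subseteq S$. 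Hence $Q$ has vertices on both sides, so $Q$ is a clique on at least $2$ vertices meeting $S$ and meeting $V(G) \setminus S$.

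The second step extracts a contradiction from the edge-bound hypothesis. Let $Q_S = Q \cap S \neq \emptyset$. Every vertex of $Q_S$ avoids $N_{G[S]}[D_S]$ (as it avoids the larger set $N_G[D_S]$), so $Q_S$ lies in $G[S] - N_{G[S]}[D_S]$; being a clique, $Q_S$ lies inside a single component $M$ of $G[S] - N_{G[S]}[D_S]$. By hypothesis $M$ sends at most $k-2$ edges to $V(G)\setminus S$. On the other hand, each vertex of $Q \setminus S = Q \cap (V(G)\setminus S)$, which is nonempty, is adjacent in $G$ to every vertex of $Q_S$, and these adjacencies are edges from $M$ to $V(G)\setminus S$; since $|Q_S| \ge 1$ and $|Q \setminus S| \ge 1$, they number at least $|Q_S| \cdot |Q \setminus S|$. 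Now $|Q_S| + |Q \setminus S| = k$ with both summands at least $1$, so $|Q_S|\cdot|Q\setminus S| \ge k-1 > k-2$, contradicting the edge bound on $M$. (If instead $G[S] - N_{G[S]}[D_S] \cong (\emptyset,\emptyset)$ then $Q_S = \emptyset$ immediately, contradicting $Q_S \neq \emptyset$.) Therefore no such $Q$ exists, $D$ is a $k$-clique isolating set of $G$, and since $i$ was arbitrary, $c$ is a $(k+1)$-$k$-CI-coloring; in particular $G$ is $(k+1)$-$k$-CI-colorable.

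The only delicate point is the counting inequality $|Q_S|\cdot|Q\setminus S| \ge k-1$ for a partition of $k = |Q|$ into two positive parts, which is where the precise constant $k-2$ in the hypothesis is used; everything else is bookkeeping about which closed neighborhoods a putative surviving $k$-clique must avoid. One should also note that the two colorings use the same color set $[k+1]$ and are combined class-by-class, so that $D_S$ and $D_{G-S}$ genuinely are the two parts of a single color class of $c$; this is implicit in the statement's notation $c = \left.c\right|_S \cup \left.c\right|_{G-S}$ and requires no extra argument.
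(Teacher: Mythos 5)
Your proof is correct and follows essentially the same approach as the paper; the paper's own proof simply asserts that the $(k-2)$-edge bound prevents any new $K_k$ from forming across the cut, whereas you supply the explicit counting argument ($|Q_S|\cdot|Q\setminus S|\ge k-1>k-2$ for a $k$-clique split into two nonempty parts) that justifies this assertion.
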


\begin{proof}
	For each $i \in [k+1]$, let $D_S^{i}$ and $D^{i}_{G-S}$ be the $i$-th color classes of $G[S]$ and $G-S$, respectively. It suffices to prove that $D_S^{i} \cup D^{i}_{G-S}$ is a $k$-clique isolating set of $G$. Since each component of $G[S]-N_{G[S]}[D^{i}_S]$ joins to $V(G)\setminus S$ at most $k-2$ edges, we create no new $K_k$ between $G[S]-N_{G[S]}[D^{i}_S]$ and $(G-S)-N_{G-S}[D^{i}_{G-S}]$. So, $G-N[D_S^{i} \cup D^{i}_{G-S}]$ contains no $K_k$, and the result follows.
\end{proof}

	\begin{lemma}\label{lem2.3}
	Let $G_1,G_2,\ldots,G_s$ be the distinct components of a graph $G$. If for each $i \in [s]$, $G_i$ admits a $(k+1)$-$k$-CI-coloring $\left.c\right|_{G_i}$, then $c:=\cup_{i \in [s]}\left.c\right|_{G_i}$ is a $(k+1)$-$k$-CI-coloring of $G$. Conversely, if $G$ admits a $(k+1)$-$k$-CI-coloring $c$, then for each $i \in [s]$, $\left.c\right|_{G_i}$ is a $(k+1)$-$k$-CI-coloring of $G_i$.
    \end{lemma}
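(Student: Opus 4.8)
The plan is to reduce the statement to two elementary facts about disconnected graphs: that taking closed neighborhoods commutes with restriction to a component, and that a $k$-clique, being a connected subgraph, lies entirely inside a single component.

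First I would fix notation: for a subset $D \subseteq V(G)$ write $D_i := D \cap V(G_i)$ for each $i \in [s]$. Since $G_1,\ldots,G_s$ are the distinct components of $G$, no edge of $G$ joins vertices of different $G_i$'s; hence $N_G(D_i) \subseteq V(G_i)$ and, more precisely, $N_G[D] \cap V(G_i) = N_{G_i}[D_i]$. Consequently $G - N_G[D]$ is the disjoint union of the graphs $G_i - N_{G_i}[D_i]$ over $i \in [s]$.

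Next I would observe that $K_k$ is connected, so any copy of $K_k$ occurring as a subgraph of $G - N_G[D]$ must be contained in one of the pieces $G_i - N_{G_i}[D_i]$. Therefore $G - N_G[D]$ contains no $K_k$ if and only if $G_i - N_{G_i}[D_i]$ contains no $K_k$ for every $i \in [s]$; equivalently, $D$ is a $k$-clique isolating set of $G$ if and only if $D_i$ is a $k$-clique isolating set of $G_i$ for every $i$.

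Finally I would apply this equivalence one color class at a time. Given a $(k+1)$-$k$-CI-coloring $\left.c\right|_{G_i}$ of each $G_i$ over the common color set $[k+1]$, set $c := \bigcup_{i \in [s]} \left.c\right|_{G_i}$; its $j$-th color class $D^j$ satisfies $D^j \cap V(G_i) = $ the $j$-th color class of $\left.c\right|_{G_i}$, which is a $k$-clique isolating set of $G_i$ by hypothesis, so by the equivalence above $D^j$ is a $k$-clique isolating set of $G$, and hence $c$ is a $(k+1)$-$k$-CI-coloring of $G$. The converse is the same equivalence read in the other direction: if $c$ is a $(k+1)$-$k$-CI-coloring of $G$, then each color class $D^j$ is a $k$-clique isolating set of $G$, so $D^j \cap V(G_i)$ is a $k$-clique isolating set of $G_i$, i.e. $\left.c\right|_{G_i}$ is a $(k+1)$-$k$-CI-coloring of $G_i$. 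I do not expect any real obstacle here; the only point needing a careful line is the identity $N_G[D] \cap V(G_i) = N_{G_i}[D_i]$, which follows solely from the absence of edges between distinct components.
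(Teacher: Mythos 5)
Your proposal is correct and follows essentially the same route as the paper's proof, which also reduces everything to the identity $G-N[D_i]=\bigcup_{j\in[s]}(G_j-N_{G_j}[D_i^j])$ and treats one color class at a time. Your explicit remark that a $K_k$ is connected and hence lies in a single component is a detail the paper leaves implicit, but the argument is the same.
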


\begin{proof}
	For each $i \in [k+1]$, let $D_i$ be the $i$-th color class of $G$, and for each $j \in [s]$, let $D_i^j$ be the be the $i$-th color class of $G_j$. For the former, it suffices to prove that $D_i=\cup_{j \in [s]}D_i^j$ is a $k$-clique isolating set of $G$. For the latter, it suffices to prove that $D_i^j=D_i \cap V(G_j)$ is a $k$-clique isolating set of $G_j$. In fact, we can easily see $$G-N[D_i]=\bigcup_{j \in [s]}(G_j-N_{G_j}[D_i^j]).$$
	\noindent So, both arguments are obvious, and the result follows.
\end{proof}

	\begin{corollary}
	A graph $G$ is $(k+1)$-$k$-CI-colorable if and only if each component of $G$ is $(k+1)$-$k$-CI-colorable.
    \end{corollary}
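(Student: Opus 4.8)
The plan is to read this off directly from Lemma \ref{lem2.3}, which has been stated in exactly the two directions we need. Let $G_1, \ldots, G_s$ denote the distinct components of $G$, and recall that a $(k+1)$-$k$-CI-coloring is by definition a map into the fixed color set $[k+1]$, so that restrictions to components and unions over components are automatically compatible in the sense of having the same number of color classes.

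For the forward implication I would assume $G$ is $(k+1)$-$k$-CI-colorable, fix such a coloring $c$, and invoke the ``conversely'' clause of Lemma \ref{lem2.3}: it states precisely that $\left.c\right|_{G_i}$ is a $(k+1)$-$k$-CI-coloring of $G_i$ for every $i \in [s]$, so each component is $(k+1)$-$k$-CI-colorable. For the reverse implication I would assume each $G_i$ admits a $(k+1)$-$k$-CI-coloring $\left.c\right|_{G_i}$ and apply the first clause of Lemma \ref{lem2.3}, which gives that $c := \bigcup_{i \in [s]} \left.c\right|_{G_i}$ is a $(k+1)$-$k$-CI-coloring of $G$; hence $G$ is $(k+1)$-$k$-CI-colorable. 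The two directions together are the asserted equivalence.

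There is essentially no obstacle here: the corollary is just the ``componentwise'' restatement of Lemma \ref{lem2.3}, whose content is the disjoint-union identity $G - N[D_i] = \bigcup_{j \in [s]} (G_j - N_{G_j}[D_i^j])$ (with $D_i^j = D_i \cap V(G_j)$), valid because there are no edges between distinct components. The only point worth a sentence is that a $K_k$, being connected, must lie entirely within a single component, so $G - N[D_i]$ is $K_k$-free if and only if each $G_j - N_{G_j}[D_i^j]$ is; this is exactly why a color class is $k$-clique isolating in $G$ iff its trace on every component is $k$-clique isolating there. This corollary is what lets the later proof of Theorem \ref{th2.1} (and hence Theorem \ref{th1.9}) restrict attention to connected graphs.
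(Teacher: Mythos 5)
Your proposal is correct and matches the paper's intent exactly: the corollary is stated without proof as an immediate consequence of Lemma \ref{lem2.3}, whose two clauses give precisely the two directions of the equivalence. Your additional remark about the disjoint-union identity and connectedness of $K_k$ is the same justification the paper gives inside the proof of that lemma.
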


\noindent\textbf{Proof of Theorem \ref{th2.1}.} Let $G \ncong K_k$ be a counterexample of minimum order $n$. Note that $k\geq 3$ and $\Delta(G) \leq k$. If $G$ contains no $K_k$, then, clearly, $G$ is $(k+1)$-$k$-CI-colorable, a contradiction. Assume that $G$ contains a $K_k$, and it is easy to verify that $n \geq k+1$. In what follows we present several claims describing some structural properties of $G$ which culminate in the implication of its non-existence.

\vspace{3mm}
A {\it separating} $K_k$ of $G$ is a $k$-clique $H$ of $G$ such that $G-V(H)$ is disconnected.

\begin{claim}\label{claim2.1}
	Each $K_k$ in $G$ is separating.
\end{claim}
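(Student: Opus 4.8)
The plan is to argue by contradiction: suppose $H$ is a non-separating $K_k$ in the minimal counterexample $G$, so that $G - V(H)$ is connected (or empty). I would set $S := V(H)$ and try to apply \Cref{lem2.2} with this $S$. The subgraph $G[S]$ is the $k$-clique $K_k$ itself, so I need a $(k+1)$-$k$-CI-coloring of $K_k$ with the extra property demanded by the lemma. The natural choice is to give the $k$ vertices of $H$ the $k$ distinct colors $1,\dots,k$; then each color class $D_S^i$ is a single vertex of $H$, and $N_{G[S]}[D_S^i] = V(H) = S$ since $H$ is a clique, so $G[S] - N_{G[S]}[D_S^i] \cong (\emptyset,\emptyset)$. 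Color $k+1$ is unused on $S$, which is fine (its color class on $S$ is empty, again giving the empty graph). So the ``at most $k-2$ edges'' hypothesis of \Cref{lem2.2} is satisfied vacuously for every color class of $G[S]$.

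Next I would handle $G - S = G - V(H)$. Two cases: if $G - V(H)$ is empty then $n = k$, contradicting $n \geq k+1$; so $G - V(H)$ is a nonempty connected graph. To invoke \Cref{lem2.2} I need $G - V(H)$ to be $(k+1)$-$k$-CI-colorable. By minimality of $G$, every connected graph of smaller order with maximum degree at most $k$ that is not $K_k$ is $(k+1)$-$k$-CI-colorable; and $K_k$ itself is trivially $(k+1)$-$k$-CI-colorable (assign the $k$ vertices distinct colors, every color class isolates since removing any closed neighborhood removes the whole clique — indeed $K_k$ \emph{is} $(k+1)$-$k$-CI-colorable, it is merely a forbidden base case of the theorem, not of the coloring). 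Since $\Delta(G - V(H)) \leq \Delta(G) \leq k$ and $|V(G - V(H))| = n - k < n$, the subgraph $G - V(H)$ admits a $(k+1)$-$k$-CI-coloring in all cases. Then \Cref{lem2.2} yields a $(k+1)$-$k$-CI-coloring of $G$, contradicting that $G$ is a counterexample. Hence $H$ must be separating.

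The one subtlety to get right — and the main obstacle — is the case analysis on the components of $G - V(H)$ and the degenerate possibility that $G - V(H)$ is disconnected or equal to $K_k$ after all; but since we are \emph{assuming} $H$ is non-separating, $G - V(H)$ is connected by definition, so the only genuine check is: what if $G - V(H) \cong K_k$? Then \Cref{lem2.2} does not directly apply via minimality (the theorem excludes $K_k$), but I can just $(k+1)$-$k$-CI-color that $K_k$ component directly as above, and \Cref{lem2.2} still applies since its hypothesis is about the coloring of $G[S]$ and the edge count from $G[S]$'s leftover components, not about $G-S$. So no real obstacle remains; the proof is a clean contradiction built from \Cref{lem2.2} and minimality. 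I would write it in three sentences.
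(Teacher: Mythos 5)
Your proposal has a fatal gap at its very first step: $G[S]=H\cong K_k$ does \emph{not} admit a $(k+1)$-$k$-CI-coloring, so the hypothesis of Lemma~\ref{lem2.2} can never be satisfied with $S=V(H)$. Since $|S|=k$ and there are $k+1$ colors, some color class $D_S^{j}$ of $G[S]$ is empty, and then $G[S]-N_{G[S]}[D_S^{j}]=G[S]=K_k$, which contains a $k$-clique; hence the empty class is not a $k$-clique isolating set of $G[S]$. Your parenthetical assertion that $K_k$ ``is $(k+1)$-$k$-CI-colorable, it is merely a forbidden base case of the theorem, not of the coloring'' is exactly backwards: $K_k$ is excluded from Theorem~\ref{th2.1} precisely \emph{because} it is not $(k+1)$-$k$-CI-colorable. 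The same error invalidates your treatment of the subcase $G-V(H)\cong K_k$. The failure is not merely formal, either: with your coloring nothing forces any vertex of the color-$(k+1)$ class of $G-S$ to be adjacent to $H$, so $G-N[D_{k+1}]$ may contain all of $H$, and the combined coloring genuinely fails to be a $(k+1)$-$k$-CI-coloring of $G$.

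This is exactly why the paper's proof of this claim cannot be routed through Lemma~\ref{lem2.2}: the coloring of $V(H)$ must be coordinated with that of $G'=G-V(H)$. Concretely, one fixes an edge $uv$ with $u\in V(H)$ and $v\in V(G')$ and arranges (w.l.o.g.) that $v$ receives the color $k+1$ missing from $V(H)$, so that $u\in N[D_{k+1}]$ and $H$ cannot survive intact in $G-N[D_{k+1}]$; the remaining danger is a $K_k$ formed by $(V(H)\setminus\{u\})\cup\{v'\}$ for some $v'\in V(G')$, which is eliminated by permuting the colors on $V(H)$, and the case $G'\cong K_k$ (where minimality gives nothing) requires its own explicit coloring of both cliques. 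Your three-sentence plan skips all of this, and the steps it substitutes are not repairable as stated.
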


\begin{proof}
	Suppose to the contrary that there exists a non-separating $K_k$ in $G$, denoted by $H$, that is, $G-V(H)$ is connected. Since $n \geq k+1$ and $G$ is connected, $G-V(H) \ncong (\emptyset,\emptyset)$. Let $G':=G-V(H)$, and $uv \in E(G)$ for some $u \in V(H)$ and $v \in V(G')$. If $G' \cong K_k$, then we define a coloring $c$ of $G$ as follows: $c(u)=k+1$; $c(v)=k$; use $\{1,2,\ldots,k-1\}$ to color $V(H) \setminus \{u\}$ such that each vertex has a different color; and use $\{1,2,\ldots,k-2,k+1\}$ to color $V(G')\setminus \{v\}$ such that each vertex has a different color. It is easy to check that for each $i \in [k+1]$, the color class $D_{i}$ of $G$ is a $k$-clique isolating set of $G$. Therefore, $c$ is a $(k+1)$-$k$-CI-coloring of $G$, and $G$ is $(k+1)$-$k$-CI-colorable.
	
	\begin{figure}[h!]
		\begin{center}
			\begin{tikzpicture}[scale=.48]
				\tikzstyle{vertex}=[circle, draw, inner sep=0pt, minimum size=6pt]
				\tikzset{vertexStyle/.append style={rectangle}}
				
				\vertex (1) at (-9,0) [scale=0.35,fill=black] {};

				\vertex (2) at (-12,3) [scale=0.35,fill=black] {};			
				\node ($u$) at (-12.5,3) {$u$};
				
				\vertex (3) at (-12,-3) [scale=0.35,fill=black] {};
				
				\node ($H$) at (-10.7,0) {$H$};
				
				\draw (0,0) circle (3cm);
				\node ($G'$) at (0,0) {$G'$};
				
				\coordinate (4) at (225:3cm);
				\fill (4) circle (3pt);
				
				\coordinate (6) at (135:3cm);
				\fill (6) circle (3pt);
				\node ($v$) at (135:2.3cm) {$v$};
				
				\vertex (7) at (-5,-2.13) [scale=0.35,fill=black] {};			
				\node ($v'$) at (-5,-2.63) {$v'$};
				
				\node ($H'$) at (-8.5,-1.5) {$H'$};

				\path
				(1) edge (2)
				(1) edge (3)
				(2) edge (3)
				(2) edge (6)
				(4) edge (7)
				;
				
				\path
				(1) edge (7) [dashed]
				(3) edge (7)
				;

			\end{tikzpicture}
		\end{center}
		\par {\footnotesize \centerline{{\bf Fig. 1.} ~The unique $k$-clique $H'$ in $G-N[D_{k+1}]$ where $k=3$.\hypertarget{Fig1}}}
	\end{figure}
	
	If $G' \ncong K_k$, then by the minimality of $G$, $G'$ admits a $(k+1)$-$k$-CI-coloring $\left.c\right|_{G'}$. W.l.o.g., we may assume that $\left.c\right|_{G'}(v)=k+1$. Now we use $\{1,2,\ldots,k\}$ to color $V(H)$ such that each vertex has different color, and denote $\left.c\right|_{H}$ and $c:=\left.c\right|_{G'} \cup \left.c\right|_{H}$ by the present colorings of $H$ and $G$. Clearly, for each $i \in [k]$, the color class $D_i$ of $G$ is a $k$-clique isolating set of $G$, and the color class $D_{k+1}$ is not a $k$-clique isolating set of $G$ (otherwise, $c$ is a $(k+1)$-$k$-CI-coloring of $G$) if and only if $G-N[D_{k+1}]$ contains a $k$-clique. Since $\Delta(G) \leq k$, the unique $k$-clique of $G-N[D_{k+1}]$ only can be formed by $(V(H) \setminus \{u\}) \cup \{v'\}$ (see Fig. \hyperlink{Fig1}{1}), where $v'$ is a vertex of $G'$ distinct from $v$. Note that $c(v') \neq k+1$ (otherwise, $D_{k+1}$ is a $k$-clique isolating set of $G$). W.l.o.g., we may assume $c(v')=k$. Then we recolor $V(H)$ by $\{1,2,\ldots,k-1,k+1\}$ such that each vertex has different color, and denote $\left.c'\right|_{H}$ and $c':=\left.c\right|_{G'} \cup \left.c'\right|_{H}$ by the present colorings of $H$ and $G$. One can verify that $c'$ is a $(k+1)$-$k$-CI-coloring of $G$ in this situation. Therefore, $G$ is always $(k+1)$-$k$-CI-colorable.
\end{proof}

Let $K_{k}^+$ be the graph obtained from $K_k$ by adding a pendant edge to it.

\begin{claim}\label{claim2.2}
	Each $K_k^+$ in $G$ is induced.
\end{claim}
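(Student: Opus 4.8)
The plan is to argue by contradiction: suppose $G$ contains a non-induced $K_k^+$. Unpacking the definition, this means there is a $k$-clique $H$ of $G$ and a vertex $v \notin V(H)$ having at least two neighbours in $V(H)$; write $\{u_1,\dots,u_j\}=N(v)\cap V(H)$ with $j\ge 2$. The first step is a degree analysis: since $\Delta(G)\le k$ and each $u_i$ already has the $k-1$ neighbours $V(H)\setminus\{u_i\}$ together with $v$, we get $d(u_i)=k$ and $N(u_i)=(V(H)\setminus\{u_i\})\cup\{v\}$ for every $i$. In particular none of $u_1,\dots,u_j$ has a neighbour outside $V(H)\cup\{v\}$.

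Next I would invoke Claim \ref{claim2.1}: the $k$-clique $H$ is separating, so $G-V(H)$ decomposes into components $G_1,\dots,G_s$ with $s\ge 2$; say $v\in V(G_1)$. A crucial sub-step is to observe that no $G_i$ is isomorphic to $K_k$: if some $G_i\cong K_k$, then $V(G_i)$ is the vertex set of a $k$-clique of $G$, yet $G-V(G_i)$ is connected, because $V(H)$ is a clique and every remaining component of $G-V(H)$ still attaches to $V(H)$ by an edge; this contradicts Claim \ref{claim2.1}. (One also notes $G_1\ncong K_k$ directly, since $d_{G_1}(v)\le k-j\le k-2<k-1$.)

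Now set $A:=V(H)\cup V(G_1)$, so that $G-A=\bigcup_{i\ge 2}G_i$. Then $G[A]$ is connected, has at least $k+1$ vertices (hence is not $K_k$), and, because $s\ge 2$, has order strictly less than $n$; by the minimality of $G$ it is $(k+1)$-$k$-CI-colorable. Likewise each $G_i$ with $i\ge 2$ is connected, not isomorphic to $K_k$, and a proper induced subgraph of $G$, so it is $(k+1)$-$k$-CI-colorable by minimality, and hence so is $G-A$ by Lemma \ref{lem2.3}. Finally I would bound the cut between $A$ and $V(G)\setminus A$: any such edge must have its $A$-endpoint in $V(H)$, since there is no edge between $V(G_1)$ and $\bigcup_{i\ge 2}V(G_i)$ (distinct components of $G-V(H)$), and in fact in $V(H)\setminus\{u_1,\dots,u_j\}$, since the $u_i$'s have all their neighbours in $A$; as each of the $k-j$ vertices of $V(H)\setminus\{u_1,\dots,u_j\}$ sends at most one edge out of $V(H)$, there are at most $k-j\le k-2$ such edges in total. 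Consequently, for every colour class $D_A$ of $G[A]$, every component of $G[A]-N_{G[A]}[D_A]$ joins $V(G)\setminus A$ by at most $k-2$ edges, so Lemma \ref{lem2.2} yields a $(k+1)$-$k$-CI-coloring of $G$, contradicting the choice of $G$.

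The step I expect to be the main obstacle is ruling out $K_k$-components of $G-V(H)$ and, more generally, guaranteeing that every piece handed to the minimality hypothesis is genuinely not $K_k$ and genuinely of smaller order; since $K_k$ is not $(k+1)$-$k$-CI-colorable, a $K_k$-component could not simply be peeled off and recombined, so it is essential that the separating property of $k$-cliques (Claim \ref{claim2.1}) forbids it. The remaining work is a routine verification of the hypotheses of Lemmas \ref{lem2.2} and \ref{lem2.3}.
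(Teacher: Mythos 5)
Your argument is correct, and it reaches the conclusion by a genuinely different decomposition than the paper's. The paper takes $S$ to be only the $k+1$ vertices of the offending $K_k^+$, colours them explicitly with all $k+1$ colours (one per vertex), obtains a colouring of $G-S$ from minimality together with Lemma \ref{lem2.3} (using Claim \ref{claim2.1} only to exclude $K_k$-components of $G-S$), and then checks the $(k-2)$-edge hypothesis of Lemma \ref{lem2.2} locally, from the fact that the pendant vertex of the $K_k^+$ has at least two neighbours inside $S$. You instead cut along the clique $H$ itself: with $S=V(H)\cup V(G_1)$, where $G_1$ is the component of $G-V(H)$ containing $v$, both $G[S]$ and $G-S$ are coloured by minimality and no explicit colouring is ever written down. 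What your version buys is a uniform and very clean verification of Lemma \ref{lem2.2}: since each $u_i$ is saturated with $N(u_i)=(V(H)\setminus\{u_i\})\cup\{v\}$, the entire cut between $S$ and $V(G)\setminus S$ has at most $k-j\le k-2$ edges, so the hypothesis holds for every colour class without inspecting the colouring. The price is a heavier reliance on Claim \ref{claim2.1}: you need $H$ itself to be separating (to guarantee $s\ge 2$, hence $|S|<n$), whereas the paper invokes separation only to rule out $K_k$-components of the complement; your handling of that point (a $K_k$-component of $G-V(H)$ would be a non-separating $k$-clique because the clique $V(H)$ keeps the rest connected) is exactly right and matches the paper's implicit reasoning. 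Both routes are sound, and your identification of the $K_k$-component issue as the essential obstacle is accurate.
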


\begin{proof}
	Suppose to the contrary that there exists a non-induced $K_k^+$ in $G$, denoted by $H$. There exists an edge $uv \in E(H^+)\setminus E(H)$ where $d_{H}(u)=1$, $v \in V(H) \setminus \{u\}$ and $H^+:=G[V(H)]$. Let $G':=G-V(H^+)=G-V(H)$. If $G' \cong (\emptyset,\emptyset)$, then $G$ is $(k+1)$-$k$-CI-colorable obviously. Assume that $G' \ncong (\emptyset,\emptyset)$. By Claim \ref{claim2.1}, we know that $G'$ contains no $K_k$-component. By Lemma \ref{lem2.3} and the minimality of $G$, $G'$ admits a $(k+1)$-$k$-CI-coloring $\left.c\right|_{G'}$. Now we use $\{1,2,\ldots,k+1\}$ to color $V(H^+)$ such that each vertex has a different color, and denote $\left.c\right|_{H^+}$ and $c:=\left.c\right|_{G'} \cup \left.c\right|_{H^+}$ by the present colorings of $H^+$ and $G$. Since $\Delta(G) \leq k$ and $d_H(u) \geq 2$, $u$ joins to $V(G')$ by at most $k-2$ edges. By Lemma \ref{lem2.2} (regrading $V(H^+)$ as $S$), $c$ is a $(k+1)$-$k$-CI-coloring of $G$, and thus, $G$ is $(k+1)$-$k$-CI-colorable.
\end{proof}

A {\it double $k$-clique} $DK_k$ is the connected graph obtained from two vertex-disjoint $K_k$ by adding an edge between them.

	\begin{figure}[h!]
	\begin{center}
		\begin{tikzpicture}[scale=.42]
			\tikzstyle{vertex}=[circle, draw, inner sep=0pt, minimum size=6pt]
			\tikzset{vertexStyle/.append style={rectangle}}
			
			\vertex (1) at (-2,7) [scale=0.35,fill=black] {};

			\vertex (2) at (-5,4) [scale=0.35,fill=black] {};
			
			\vertex (3) at (-5,10) [scale=0.35,fill=black] {};
			
			\vertex (4) at (2,7) [scale=0.35,fill=black] {};
			
			\vertex (5) at (5,4) [scale=0.35,fill=black] {};
			
			\vertex (6) at (5,10) [scale=0.35,fill=black] {};

			\node ($H$) at (0,8.5) {$H$};
			
			\node ($K_3^+$) at (7,2) {$K_3^+$};
			
%			\node (non-induced) at (8.5,1) {\footnotesize (non-induced)};

			\draw (0,0) circle (3cm);
			\node ($G'$) at (0,0) {$G'$};
			
			\coordinate (7) at (180:3cm);
			\fill (7) circle (3pt);

			\coordinate (8) at (0:3cm);
			\fill (8) circle (3pt);
			
			\vertex (9) at (-5,0) [scale=0.35,fill=black] {};
			
			\vertex (10) at (5,0) [scale=0.35,fill=black] {};

			\path
			(1) edge (2)
			(1) edge (3)
			(2) edge (3)
			(1) edge (4)
			(4) edge (5)
			(4) edge (6)
			(5) edge (6)
			(8) edge (10)
			;
			
			\draw (-5,10)..controls (-7,7) and (-7,3) ..(9) [dashed];
			
			\draw (5,10)..controls (7,7) and (7,3) ..(10) [dashed];
			
			\path
			(2) edge (9) [dashed]
			(5) edge (10)
			(9) edge (7)
			;

		\end{tikzpicture}
	\end{center}
	\par {\footnotesize \centerline{{\bf Fig. 2.} ~There exists a non-induced $K_k^+$ in $G$ where $k=3$.\hypertarget{Fig2}}}
\end{figure}

\begin{claim}\label{claim2.3}
	$G$ contains no $DK_k$.
\end{claim}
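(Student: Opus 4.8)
We argue by contradiction: assume $G$ contains a $DK_k$, call it $H$, with $k$-cliques $H_1,H_2$ joined by an edge $u_1u_2$ where $u_1\in V(H_1)$ and $u_2\in V(H_2)$. First I would record the forced local structure: since $u_1$ already has $k-1$ neighbors in $H_1$ together with the neighbor $u_2$ and $\Delta(G)\le k$, we get $d(u_1)=k$ and $N[u_1]=V(H_1)\cup\{u_2\}$, and symmetrically $N[u_2]=V(H_2)\cup\{u_1\}$. Hence no edge leaves $V(H)$ at $u_1$ or $u_2$, the only possible edges of $G$ inside $V(H)$ besides those of $H$ join $V(H_1)\setminus\{u_1\}$ to $V(H_2)\setminus\{u_2\}$, and each vertex of $V(H_1)\setminus\{u_1\}$ (resp.\ of $V(H_2)\setminus\{u_2\}$) has at most one neighbor outside its own clique. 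The strategy is then: delete $V(H)$, $(k+1)$-$k$-CI-color what remains using the minimality of $G$ and Lemma \ref{lem2.3}, and glue the two colorings with Lemma \ref{lem2.2}; a single degenerate configuration will instead require deleting a slightly larger set.

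If $V(G)=V(H)$ I color $G$ directly: put $c(u_1)=k$, $c(u_2)=k+1$ and color $V(H_1)\setminus\{u_1\}$ and $V(H_2)\setminus\{u_2\}$ bijectively with $[k-1]$ each. For $i\in[k-1]$ the class $D_i$ contains a vertex of $H_1$ and one of $H_2$, so $N[D_i]=V(G)$; the class $\{u_1\}$ leaves only $G[V(H_2)\setminus\{u_2\}]\cong K_{k-1}$, the class $\{u_2\}$ leaves only $G[V(H_1)\setminus\{u_1\}]\cong K_{k-1}$, and neither contains a $k$-clique. So $G$ is $(k+1)$-$k$-CI-colorable, contradicting the choice of $G$.

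Now suppose $V(G)\ne V(H)$ and set $G':=G-V(H)$. If some component $C$ of $G'$ were a $K_k$, then since $G[V(H)]$ is connected and every component of $G'$ other than $C$ sends an edge to $V(H)$, the graph $G-V(C)$ would be connected, contradicting Claim \ref{claim2.1}; hence no component of $G'$ is a $K_k$, and by minimality and Lemma \ref{lem2.3}, $G'$ has a $(k+1)$-$k$-CI-coloring $\left.c\right|_{G'}$. I then extend it over $V(H)$ by the coloring of the previous paragraph and apply Lemma \ref{lem2.2} with $S=V(H)$: the only nonempty leftovers are $G[V(H_2)\setminus\{u_2\}]$ (from the class $\{u_1\}$) and $G[V(H_1)\setminus\{u_1\}]$ (from the class $\{u_2\}$), each a $K_{k-1}$, so we are done as soon as each sends at most $k-2$ edges to $V(G')$. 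The one remaining situation is that, say, all $k-1$ vertices of $V(H_1)\setminus\{u_1\}$ have their unique neighbor outside $H_1$ lying in $V(G')$. Then a $k$-clique appearing in $G-N[D]$, where $D$ is the color class containing $u_2$ in the combined coloring, would have to consist of $V(H_1)\setminus\{u_1\}$ together with a single common outside neighbor $w$; so if these outside neighbors are not all equal the coloring already works, and otherwise they all equal one vertex $w\in V(G')$, and $G[V(H_1)\cup\{w\}]$ is $K_{k+1}$ minus the edge $u_1w$.

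In that last case I delete $S:=V(H_1)\cup\{w\}$ instead. Its bijective coloring leaves at most one vertex in each color class — $w$ (with at most one neighbor outside $S$), or $u_1$ (whose only neighbor outside $S$ is $u_2$), or nothing — so the hypothesis of Lemma \ref{lem2.2} on $S$ holds automatically, and it suffices to find a $(k+1)$-$k$-CI-coloring of $G-S$ by minimality and Lemma \ref{lem2.3}. This fails only if $G-S$ has a $K_k$-component, which by the degree conditions can only be $G[V(H_2)]$, and only when every edge leaving $V(H_2)$ enters $S$; then either $u_1u_2$ is the single such edge, so it is a cut edge of $G$ and a $(k+1)$-$k$-CI-coloring is obtained by coloring $V(H_2)$ bijectively with $[k]$ and, after a suitable permutation of the colors, combining with the $(k+1)$-$k$-CI-coloring that minimality gives the other (necessarily non-$K_k$) side, or $w$ acquires a neighbor in $V(H_2)$, whereupon $w$ and that neighbor become saturated and one checks that deleting $V(H)\cup\{w\}$ works instead (again coloring the remainder by minimality, each color class now leaving either nothing or a clique-plus-pendant on $k$ vertices that sends at most $k-2$ edges out). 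Every branch ends in a contradiction, so $G$ has no $DK_k$. The main obstacle is exactly this last round of case analysis: once the two cliques of the $DK_k$ are fixed, extension via Lemma \ref{lem2.2} is automatic unless the $k-1$ ``spare'' edges on one side all point to a common vertex, and extracting a contradiction there forces enlarging the deleted set and treating separately the cut edge that this configuration can produce.
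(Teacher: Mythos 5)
Your main line of attack is sound and essentially the paper's: delete the $DK_k$, color the remainder by minimality and Lemma \ref{lem2.3}, put the ``symmetric'' coloring on $H$ (colors $k$ and $k+1$ on $u_1,u_2$ and $[k-1]$ bijectively on each $V(H_i)\setminus\{u_i\}$), and observe that the glued coloring can only fail if all $k-1$ vertices of some $V(H_i)\setminus\{u_i\}$ have a \emph{common} external neighbour $w$, since each of them has at most one edge leaving its clique. Up to that point everything checks out (including your exclusion of $K_k$-components of $G'$ via Claim \ref{claim2.1}).

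The problem is the endgame. At the moment you derive that $G[V(H_1)\cup\{w\}]\cong K_{k+1}-u_1w$, you are already done: this graph contains a $K_k^+$ (the clique $H_1$ plus a pendant edge $wa$ for any $a\in V(H_1)\setminus\{u_1\}$) which is non-induced because $w$ has $k-1\ge 2$ neighbours in $V(H_1)$, contradicting Claim \ref{claim2.2}. This is exactly how the paper closes the argument, and skipping it costs you: the secondary case analysis you run instead is not actually completed. In the branch where $w$ has a neighbour $x\in V(H_2)$, the degree constraints force every vertex of $V(H)\cup\{w\}$ to have all its neighbours inside $V(H)\cup\{w\}$, so by connectivity $V(G)=V(H)\cup\{w\}$; the ``remainder'' you propose to color by minimality is empty, and the sentence ``one checks that deleting $V(H)\cup\{w\}$ works'' is the entire remaining content of the proof with no coloring of $G$ exhibited. (That branch is in fact vacuous — there $G-V(H_1)=G[\{w\}\cup V(H_2)]$ is connected, so $H_1$ is a non-separating $K_k$, contradicting Claim \ref{claim2.1} — but you establish neither this nor a direct coloring.) A smaller instance of the same issue: your claim that a $K_k$-component of $G-(V(H_1)\cup\{w\})$ ``can only be $G[V(H_2)]$ by the degree conditions'' also needs Claim \ref{claim2.1} to rule out a pendant $K_k$ hanging off $w$'s one external edge. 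Replace everything after the appearance of $K_{k+1}-u_1w$ with the one-line appeal to Claim \ref{claim2.2} and the proof is complete.
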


\begin{proof}
	Suppose to the contrary that there exists a $DK_k$ in $G$, denoted by $H$. Let $G':=G-V(H)$. We straightforward color $V(H)$ as in the proof (for the situation of $G' \cong K_k$) of Claim \ref{claim2.1}, and denote $\left.c\right|_{H}$ by present coloring of $H$. If $G' \cong (\emptyset,\emptyset)$, then by Claim \ref{claim2.1}, we know that $G$ is $(k+1)$-$k$-CI-colorable. If $G' \ncong (\emptyset,\emptyset)$, then by Claim \ref{claim2.1}, Lemma \ref{lem2.3} and the minimality of $G$, $G'$ contains no $K_k$-component, and $G'$ admits a $(k+1)$-$k$-CI-coloring $\left.c\right|_{G'}$. Let $c:=\left.c\right|_{G'} \cup \left.c\right|_{H}$. Since $k \geq 3$, $\Delta(G) \leq k$ and by Claim \ref{claim2.2}, we create no new $K_k$ between $H-N_{H}[D_H^i]$ and $G'-N_{G'}[D_{G'}^i]$, where $i \in [k+1]$, and $D_H^i$ and $D_{G'}^i$ are the $i$-th color classes of $H$ and $G'$; in this situation, the worst is that for some $j \in \{k,k+1\}$, there exists a non-induced $K_k^+$ in $G$ formed by one $K_k$ of the double $k$-clique $H$ and a vertex of $G'-N_{G'}[D_{G'}^j]$ (see Fig. \hyperlink{Fig2}{2}), which contradicts Claim \ref{claim2.2}. Thus, $c$ is a $(k+1)$-$k$-CI-coloring of $G$, and $G$ is $(k+1)$-$k$-CI-colorable.
\end{proof}

\begin{claim}\label{claim2.4}
	$G$ contains no $K_k$.
\end{claim}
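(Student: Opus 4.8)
The plan is to argue by contradiction, combining the structure forced by Claims~\ref{claim2.1}--\ref{claim2.3} with the degree bound $\Delta(G)\le k$. Suppose $G$ contains a $K_k$, say $H$. By Claim~\ref{claim2.1}, $H$ is separating, so $G-V(H)$ has components $G_1,\dots,G_s$ with $s\ge 2$. Since $H\cong K_k$, each vertex of $H$ has $k-1$ neighbours inside $H$, so by $\Delta(G)\le k$ it has at most one neighbour outside $H$; hence at most $k$ edges join $H$ to $V(G)\setminus V(H)$.

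Next I would show that every $G_i$ is joined to $H$ by at least $k-1$ edges. Fix $i$ and set $S:=V(G_i)$. On the one hand, $G-S$ is connected --- it contains $H$ together with every $G_j$ with $j\ne i$, and each such $G_j$ sends an edge to $H$ --- and $|V(G-S)|\ge k+1$ because some $G_j$ with $j\ne i$ is nonempty, so $G-S\ncong K_k$; being a connected subgraph of $G$ of smaller order with maximum degree at most $k$, it is $(k+1)$-$k$-CI-colorable by the minimality of $G$. On the other hand $G_i\ncong K_k$: otherwise $H$, $G_i$, and any one edge between them would form a $DK_k$ in $G$, contradicting Claim~\ref{claim2.3}; hence $G[S]=G_i$ is likewise $(k+1)$-$k$-CI-colorable by minimality. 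Now suppose $G_i$ were joined to $H$ by at most $k-2$ edges. Pick any $(k+1)$-$k$-CI-coloring of $G_i$; since every edge leaving $V(G_i)$ ends in $V(H)$ and there are at most $k-2$ of them, for each color class $D_S$ every component of $G_i-N_{G_i}[D_S]$ sends at most $k-2$ edges to $V(G)\setminus S$. Lemma~\ref{lem2.2}, applied with this $S$, then yields a $(k+1)$-$k$-CI-coloring of $G$, contradicting that $G$ is a counterexample. So each $G_i$ indeed sends at least $k-1$ edges to $H$.

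Summing over the $s\ge 2$ components, at least $2(k-1)=2k-2$ edges join $H$ to $V(G)\setminus V(H)$; but $2k-2>k$ since $k\ge 3$, contradicting the bound from the first paragraph. Hence $G$ contains no $K_k$, which --- together with the standing assumption that it does --- completes the proof of Theorem~\ref{th2.1}.

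I expect no real obstacle here, since Claims~\ref{claim2.1}--\ref{claim2.3} already carry the structural weight; the only points needing care are the routine verifications that deleting a single component $G_i$ leaves a legitimate input for minimality (so that $G-V(G_i)$ is connected and not $K_k$) and for Lemma~\ref{lem2.2} (so that at most $k-2$ edges leave $G_i$ in the relevant case), and both follow at once from $G_i$ being a component of $G-V(H)$ and from each vertex of a $K_k$ having at most one neighbour outside it.
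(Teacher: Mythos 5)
Your proof is correct, but it goes by a genuinely different route than the paper's. The paper picks a neighbour $u$ of the clique $H$, forms the $K_k^+$ on $V(H)\cup\{u\}$ (induced by Claim~\ref{claim2.2}), deletes it, colours the remainder by minimality together with Lemma~\ref{lem2.3} (Claim~\ref{claim2.1} guarantees no $K_k$-component survives), gives $V(H^+)$ a rainbow colouring, and then checks each colour class directly, using Claim~\ref{claim2.3} to kill the one potentially surviving $k$-clique through $u$. You instead run a pure edge-count: Claim~\ref{claim2.1} makes $H$ separating, so $G-V(H)$ has $s\ge 2$ components; Lemma~\ref{lem2.2} plus minimality (with Claim~\ref{claim2.3} ensuring no component is itself a $K_k$, and connectivity ensuring $G-V(G_i)$ is a legitimate smaller instance) forces each component to send at least $k-1$ edges into $H$; and $\Delta(G)\le k$ caps the edges leaving $H$ at $k$, which is incompatible with $2(k-1)>k$ for $k\ge 3$. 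All the steps check out: the components of $G-V(H)$ partition the cross-edges, $G-V(G_i)$ is connected of order at least $k+1$, and the ``at most $k-2$ edges leaving each surviving component'' hypothesis of Lemma~\ref{lem2.2} holds trivially when only $k-2$ edges leave $V(G_i)$ at all. Your version is shorter, avoids the case analysis of which colour class could fail, and makes the role of the degree hypothesis completely transparent (it is exactly what bounds the edges leaving $H$); the paper's version is constructive in that it exhibits the final colouring explicitly and stays stylistically parallel to the proofs of the preceding claims. Either argument closes the proof of Theorem~\ref{th2.1}.
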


\begin{proof}
	Suppose to the contrary that there exists a $K_k$ in $G$, denoted by $H$. Since $n \geq k+1$, there exists a vertex $u \in V(G) \setminus V(H)$ such that $uv \in E(G)$ for some $v \in V(H)$. The set $V(H)\cup \{u\}$ forms a $K_k^+$ in $G$, denoted by $H^+$. By Claim \ref{claim2.2}, $uv' \notin E(G)$ for each $v' \in V(H)$. Let $G':=G-V(H^+)$. By Claim \ref{claim2.1}, Lemma \ref{lem2.3} and the minimality of $G$, $G'$ contains no $K_k$-component, and $G'$ admits a $(k+1)$-$k$-CI-coloring $\left.c\right|_{G'}$. Now we use $\{1,2,\ldots,k+1\}$ to color $V(H^+)$ such that each vertex has a different color, and denote $\left.c\right|_{H^+}$ and $c:=\left.c\right|_{G'} \cup \left.c\right|_{H^+}$ by the present colorings of $H^+$ and $G$. W.l.o.g., assume that $c(u)=k+1$ and $c(v)=k$. It is clear that the color class $D_k$ is a $k$-clique isolating set of $G$. By the same reason noted in the proof of Claim \ref{claim2.3}, we know that $D_{k+1}$ is a $k$-clique isolating set of $G$. For each $i \in [k]$, the color class $D_i$ of $G$ is not a $k$-clique isolating set of $G$ if and only if $G-N[D_i]$ contains a $k$-clique $H'$ (only can be formed by $u$ and $k-1$ vertices of $G'-(N[D_i] \cap V(G'))$. However, there is a $DK_k$ in $G$ formed by $V(H)$ and $V(H')$, a contradiction to Claim \ref{claim2.3}. Hence, the color class $D_i$ is also a $k$-clique isolating set of $G$, and $c$ is $(k+1)$-$k$-CI-coloring of $G$ and $G$ is $(k+1)$-$k$-CI-colorable.
\end{proof}

	By Claim \ref{claim2.4}, $G$ contains no a $K_k$, contradicting the assumption in the beginning of the proof that $G$ must contain a $K_k$. Therefore, the minimal counterexample $G$ does not exist. This completes the proof of Theorem \ref{th2.1}. \qed

\section {\large Cycle isolation partitions}

In this section, we prove Theorem \ref{th1.11}. Define a coloring as follows.

\begin{itemize}
	\item A {\it 4-CI-coloring} of a graph $G$ is a mapping $c$ from $V(G)$ to the color set $[4]$ such that each color class is a cycle isolating set of $G$.
	
	\item A graph $G$ is said to be {\it 4-CI-colorable} if it admits a 4-CI-coloring.
\end{itemize}

We only need to prove the following theorem.

\begin{theorem}\label{th3.1}
	Every connected claw-free subcubic graph, except $C_3$, is 4-CI-colorable.
\end{theorem}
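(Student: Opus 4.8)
\textbf{Proof proposal for Theorem \ref{th3.1}.}
The plan is to mirror the minimal-counterexample strategy used in Section \ref{sec2}, replacing the role of $K_k$ by cycles and the role of the ambient degree bound by claw-freeness plus subcubicity. First I would establish, by exactly the same reasoning as in Lemmas \ref{lem2.2} and \ref{lem2.3}, two ``gluing'' lemmas: a disjoint-union lemma (a graph is 4-CI-colorable iff each component is) and a splitting lemma saying that if $G-S$ is 4-CI-colored and $G[S]$ is 4-CI-colored so that, in each color class, every component of the residual graph $G[S]-N_{G[S]}[D_S]$ sends at most one edge to $V(G)\setminus S$, then the combined coloring works (one edge cannot create a new cycle across the cut). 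With these in hand, let $G\ncong C_3$ be a claw-free subcubic counterexample of minimum order. If $G$ is a forest it is trivially 4-CI-colorable, so $G$ contains a cycle, and we may fix a \emph{shortest} cycle $C$ in $G$.

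Next I would extract the relevant local structure. Because $G$ is subcubic and claw-free, the neighborhoods are very constrained: every vertex has at most three neighbors, and among any three neighbors of a vertex two are adjacent. A key observation is that a shortest cycle $C$ has no chords, so $G[V(C)]=C$, and each vertex of $C$ has at most one neighbor outside $C$. I would then argue, as in Claims \ref{claim2.1}--\ref{claim2.4}, that various local configurations can be pre-colored and extended, deriving contradictions: (i) if $V(C)$ has a small attachment to the rest of $G$ (few edges leaving $V(C)$), color $C$ cyclically with the four colors so that each color class already hits every cycle living inside $N[V(C)]$, and use the splitting lemma; the short length of $C$ (so $C$ is a triangle, a $4$-cycle, or a $5$-cycle in the worst small cases, and otherwise longer cycles are easy because four colors spread densely around $C$) keeps the residual components from forming a cycle across the cut; (ii) rule out $G$ being itself a ``small'' graph like a single short cycle with pendant paths, handled directly. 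The heart of the argument is an analogue of the $DK_k$ analysis: I would show $G$ cannot contain two shortest cycles joined by a short path, nor a shortest cycle with a pendant structure that, after the natural coloring, leaves an uncovered cycle; each such configuration is removed by deleting it, 4-CI-coloring the (smaller, claw-free, subcubic) remainder by minimality, and extending.

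The main obstacle I anticipate is the ``boundary bookkeeping'': unlike the clique case, a cycle can be long, so cutting out a shortest cycle $C$ need not disconnect $G$ nicely, and the residual graph after deleting $N[D]$ for a color class $D$ can still contain a cycle that weaves between $V(C)$ and $V(G)\setminus V(C)$. Controlling this requires choosing the coloring of $C$ adapted to where the (at most $|V(C)|$) outgoing edges attach, and using that at most one such edge leaves any residual component — which is where claw-freeness is doing real work, since it forbids a vertex outside $C$ from having two non-adjacent neighbors on $C$ and thereby limits how a cycle can re-enter $C$. A secondary nuisance is the finite list of sporadic small exceptions (short cycles with a few pendant vertices, $\theta$-graphs on few vertices, $K_4$ minus an edge, etc.) that must be checked by hand to make sure $C_3$ is the only graph that genuinely fails; I would enumerate claw-free subcubic graphs of order at most some small constant and verify 4-CI-colorability directly, so that the inductive step may always assume the remainder is large enough for the gluing lemmas to apply.
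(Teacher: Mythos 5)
Your overall skeleton (minimal counterexample, two gluing lemmas, direct treatment of trees and cycles, then structural claims) matches the paper's, and the gluing lemmas you state are exactly Lemmas \ref{lem3.2} and \ref{lem3.3}. But the proposal omits the one idea that makes the induction close, and the place where you wave it away is exactly where the work is. The engine of the paper's proof is a divisibility-by-$4$ argument: a cycle whose length is a multiple of $4$ can be colored by repeating $1,2,3,4$, after which each color class leaves only isolated vertices of the cycle uncovered, each sending at most one edge outside (subcubicity), so Lemma \ref{lem3.2} applies (Claim \ref{claim3.1}). Cycles of other lengths are \emph{not} ``easy because four colors spread densely'': repeating $1,2,3,4$ around a cycle of length $4t+1$, $4t+2$ or $4t+3$ produces a seam where some color class leaves a residual path of two or three consecutive vertices with two or more outgoing edges, and Lemma \ref{lem3.2} fails there. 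The paper spends Claims \ref{claim3.2}--\ref{claim3.6} eliminating these lengths, and the mechanism is claw-freeness used in a specific way you never invoke: a vertex outside an induced cycle $C$ that is adjacent to $C$ must be adjacent to two \emph{consecutive} vertices of $C$, so it can be spliced in to lengthen $C$ by one; iterating, every induced cycle of length not divisible by $4$ either extends to a forbidden cycle of length divisible by $4$ or has so few and so constrained attachments that a hand-tailored coloring, with the seam placed at the attachment vertices, works. Without this, the inductive step for long cycles is a genuine gap.

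A second, related problem is your choice of organizing object. In a claw-free subcubic graph with a degree-$3$ vertex the \emph{shortest} cycle is a triangle, so fixing a shortest cycle collapses immediately to triangle analysis and gives no handle on longer induced cycles --- which must nevertheless all be destroyed by every color class, since the residual graph has to be a forest, not merely triangle-free. The paper instead first proves that a minimal counterexample has \emph{no} induced cycle of length at least $4$ (Claims \ref{claim3.1}--\ref{claim3.6}) and only then analyzes triangles (each $K_3$ separating, no $DK_3$, no $K_3^+$, Claims \ref{claim3.7}--\ref{claim3.9}), reaching the contradiction that a degree-$3$ vertex would then induce a claw. Your ``boundary bookkeeping'' worry is the right worry, but its resolution is the mod-$4$ mechanism above, not an enumeration of small sporadic graphs.
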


\begin{lemma}\label{lem3.2}
	Let $G$ be a connected graph and $S \subseteq V(G)$. If $G-S$ admits a 4-CI-coloring $\left.c\right|_{G-S}$, and $G[S]$ admits a 4-CI-coloring $\left.c\right|_{S}$ such that for each color class $D_S$ of $G[S]$, each component of $G[S]-N_{G[S]}[D_S]$ joins to $V(G)\setminus S$ by at most one edge, then $c:=\left.c\right|_{S} \cup \left.c\right|_{G-S}$ is a 4-CI-coloring of $G$, and thus, $G$ is 4-CI-colorable.
\end{lemma}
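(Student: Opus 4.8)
\textbf{Proof proposal for Lemma \ref{lem3.2}.}

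The plan is to mirror the proof of Lemma \ref{lem2.2} almost verbatim, with the threshold ``$k-2$ edges'' replaced by ``one edge'' and ``$k$-clique'' replaced by ``cycle'' throughout. Concretely, for each $i \in [4]$ let $D_S^i$ and $D_{G-S}^i$ denote the $i$-th color classes of the colorings $\left.c\right|_S$ and $\left.c\right|_{G-S}$, respectively. Since $\left.c\right|_S \cup \left.c\right|_{G-S}$ is by construction a mapping $V(G) \to [4]$, it suffices to show that each union $D^i := D_S^i \cup D_{G-S}^i$ is a cycle isolating set of $G$, i.e.\ that $G - N_G[D^i]$ contains no cycle.

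The key observation is that $N_G[D^i]$ restricted to $S$ is exactly $N_{G[S]}[D_S^i]$ together with whatever vertices of $S$ are dominated from $V(G)\setminus S$, and similarly on the other side; so $G - N_G[D^i]$ is a subgraph of the disjoint union $\big(G[S] - N_{G[S]}[D_S^i]\big) \,\cup\, \big((G-S) - N_{G-S}[D_{G-S}^i]\big)$ together with a set of edges running between the two parts. By hypothesis, each component $T$ of $G[S] - N_{G[S]}[D_S^i]$ sends at most one edge to $V(G)\setminus S$, and since $G-N_G[D^i]$ drops whichever endpoints got dominated, each component of $G-N_G[D^i]$ that meets $S$ still sends at most one edge across the cut. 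A connected graph in which some vertex-separating set of the form ``one part of a bridge-like cut'' is joined by a single edge cannot contain a cycle that uses that edge, because a cycle through an edge $e$ requires a second edge-disjoint $T$-$(V(G)\setminus S)$ path. Hence any cycle in $G - N_G[D^i]$ lies entirely inside one of the two pieces; but $(G-S)-N_{G-S}[D_{G-S}^i]$ is cycle-free since $\left.c\right|_{G-S}$ is a 4-CI-coloring, and each component $T$ of $G[S]-N_{G[S]}[D_S^i]$ is cycle-free since $\left.c\right|_S$ is a 4-CI-coloring of $G[S]$ (here one uses that cycles are subgraph-monotone, so being cycle-free in $G[S]$ survives under edge deletion). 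Therefore $G-N_G[D^i]$ is acyclic, $D^i$ is a cycle isolating set, and $c$ is a 4-CI-coloring of $G$.

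The only point requiring a little care — and the one I would single out as the potential obstacle — is the claim that ``at most one crossing edge per component'' genuinely forbids a cross-cut cycle. This is immediate: a cycle is $2$-edge-connected, so if a cycle $Z$ in $G-N_G[D^i]$ used a vertex of $S$ and a vertex of $V(G)\setminus S$, then $Z$ would contain two edge-disjoint paths between the $S$-side and the non-$S$-side of the cut, contradicting that only one edge crosses out of the relevant component. (Note this is strictly stronger than the ``$k-2$'' bound needed in Lemma \ref{lem2.2}: for cliques one must forbid enough crossing edges to rule out a $K_k$, which needs $k-1$ crossing edges from a single vertex, whereas for cycles even two crossing edges from a component could close a cycle, so the threshold drops to one.) I would present the write-up in the same compact style as the proof of Lemma \ref{lem2.2}, observing that $G-N_G[D^i]$ creates no new cycle between the two sides and citing the two hypotheses for cycle-freeness of each side.
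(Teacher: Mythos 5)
Your proof is correct and follows essentially the same route as the paper's: the paper likewise reduces to showing that each $D_S^i \cup D_{G-S}^i$ is a cycle isolating set and observes that the one-crossing-edge hypothesis prevents any new cycle between $G[S]-N_{G[S]}[D_S^i]$ and $(G-S)-N_{G-S}[D_{G-S}^i]$. Your only addition is to spell out the $2$-edge-connectivity argument that the paper leaves implicit, which is a fair (and correct) elaboration rather than a different approach.
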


\begin{proof}
	The proof is similar to the proof of Lemma \ref{lem2.2}. For each $i \in [4]$, let $D_S^i$ and $D_{G-S}^i$ be defined as in Lemma \ref{lem2.2}. Since each component of $G[S]-N_{G[S]}[D^{i}_S]$ joins to $V(G)\setminus S$ at most one edge, we create no new cycle through the edges between $G[S]-N_{G[S]}[D^{i}_S]$ and $(G-S)-N_{G-S}[D^{i}_{G-S}]$. Thus, $G-N[D_S^{i} \cup D^{i}_{G-S}]$ contains no cycle and $D_S^{i} \cup D^{i}_{G-S}$ is a cycle isolating set of $G$. The result follows.
\end{proof}

\begin{lemma}\label{lem3.3}
	Let $G_1,G_2,\ldots,G_s$ be the distinct components of a graph $G$. If for each $i \in [s]$, $G_i$ admits a 4-CI-coloring $\left.c\right|_{G_i}$, then $c:=\cup_{i \in [s]}\left.c\right|_{G_i}$ is a 4-CI-coloring of $G$. Conversely, if $G$ admits a 4-CI-coloring $c$, then for each $i \in [s]$, $\left.c\right|_{G_i}$ is a 4-CI-coloring of $G_i$.
\end{lemma}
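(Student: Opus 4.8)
The plan is to mimic the proof of Lemma~\ref{lem2.3} almost verbatim, with ``contains no $K_k$'' replaced throughout by ``contains no cycle''. Fix $i \in [4]$, let $D_i$ be the $i$-th color class of $G$ under $c$, and for each $j \in [s]$ put $D_i^j := D_i \cap V(G_j)$, which is exactly the $i$-th color class of $G_j$ under $\left.c\right|_{G_j}$. First I would record the structural identity
\[
 G - N[D_i] \;=\; \bigcup_{j \in [s]} \bigl(G_j - N_{G_j}[D_i^j]\bigr),
\]
which holds because distinct components of $G$ are non-adjacent: thus $N_G[D_i] \cap V(G_j) = N_{G_j}[D_i^j]$, and removing $N_G[D_i]$ from $G$ deletes precisely $N_{G_j}[D_i^j]$ from each $G_j$ while introducing no edges between distinct components.

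Next I would invoke the connectedness of cycles: any cycle contained in the disjoint union on the right-hand side lies entirely inside a single term $G_j - N_{G_j}[D_i^j]$. Hence $G - N[D_i]$ contains a cycle if and only if $G_j - N_{G_j}[D_i^j]$ contains a cycle for some $j \in [s]$; equivalently, $D_i$ is a cycle isolating set of $G$ if and only if $D_i^j$ is a cycle isolating set of $G_j$ for every $j \in [s]$.

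Both directions then follow immediately. For the first, if each $\left.c\right|_{G_j}$ is a $4$-CI-coloring, then each $D_i^j$ is a cycle isolating set of $G_j$, so by the equivalence every color class $D_i$ of $c = \cup_{j \in [s]}\left.c\right|_{G_j}$ is a cycle isolating set of $G$, i.e.\ $c$ is a $4$-CI-coloring. For the converse, if $c$ is a $4$-CI-coloring then each $D_i$ is a cycle isolating set of $G$, whence the equivalence forces each $D_i^j$ to be a cycle isolating set of $G_j$; letting $i$ range over $[4]$ shows $\left.c\right|_{G_j}$ is a $4$-CI-coloring of $G_j$. There is no genuine obstacle here: this is a routine ``disjoint union'' fact, and the only point requiring a moment's care is the displayed identity — that applying $N_G[\cdot]$ and then deleting does not mix the components — which is the same observation already used in Lemma~\ref{lem2.3}.
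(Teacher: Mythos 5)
Your proposal is correct and matches the paper's intent: the paper's proof of this lemma is simply ``Obvious,'' deferring to the identical argument given for Lemma~\ref{lem2.3}, which is exactly the displayed identity and the connectedness observation you spell out. No gaps.
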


\begin{proof}
	Obvious.
\end{proof}

\begin{lemma}\label{lem3.4}
	If $G$ is a tree or cycle, then Theorem \ref{th3.1} is true.
\end{lemma}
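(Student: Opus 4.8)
The plan is to verify Theorem~\ref{th3.1} directly on the two special families that will serve as base cases for the general proof, namely trees and cycles. For a tree $T$, note that $T$ is acyclic, so $T-N[D]$ is acyclic for any $D \subseteq V(T)$; in particular the empty set is already a cycle isolating set, and hence any mapping $c : V(T) \to [4]$ whatsoever is a 4-CI-coloring. Thus trees are trivially 4-CI-colorable, and there is nothing to check. The content of the lemma therefore lies entirely in the cycle case.

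For a cycle $C_n$ I would split according to $n$. Since $C_3$ is the excluded graph, assume $n \geq 4$. The key observation is that deleting the closed neighborhood $N[D]$ of a nonempty set $D$ from $C_n$ always leaves a disjoint union of paths (possibly empty), which is acyclic; so any color class that is used at least once on $C_n$ is automatically a cycle isolating set. Hence it suffices to exhibit, for each $n \geq 4$, a surjection-free-is-fine coloring $c : V(C_n) \to [4]$ in which \emph{every} color of $[4]$ actually appears — equivalently, a proper-in-name-only assignment using all four colors. This is possible precisely when $n \geq 4$: for $n = 4$ use the colors $1,2,3,4$ around the cycle; for $n = 5$ use $1,2,3,4,1$; and for general $n \geq 4$ repeat the pattern $1,2,3,4,1,2,3,4,\dots$, which always uses all four colors once $n\ge 4$. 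Each of the four color classes is then nonempty, so each is a cycle isolating set by the observation above, and $c$ is a 4-CI-coloring. (If one preferred not to insist all four colors appear, one still has to rule out a color class equal to all of $V(C_n)$, which forces $N[D] = V(C_n)$ and leaves nothing, still fine; the only genuine constraint is $n \geq 3$, and $n=3$ is exactly the excluded case, since with three vertices any 4-coloring leaves a color unused and an all-one-color class on $C_3$ gives $C_3 - N[V(C_3)] = (\emptyset,\emptyset)$ — so in fact even $C_3$ is 4-CI-colorable, but it is excluded to keep the later induction clean.)

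There is essentially no obstacle here; the lemma is a warm-up that isolates the trivial base cases. The one point to state carefully is the structural fact that removing a closed neighborhood from a cycle or forest never creates a cycle, which makes nonempty color classes automatically isolating — this is what makes both cases immediate and is the reusable idea. I would write the proof as two short paragraphs: one sentence for trees (acyclic, so any coloring works), and a two- or three-line argument for $C_n$ with $n \geq 4$ exhibiting the periodic $1,2,3,4$ coloring and invoking the path-decomposition of $C_n - N[D]$.
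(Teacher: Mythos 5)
Your main argument is correct and is essentially the paper's proof: trees are acyclic so any coloring works, and for $C_n$ with $n\ge 4$ one only needs each of the four color classes to be nonempty, since removing the closed neighborhood of a nonempty set from a cycle leaves a disjoint union of paths. One caveat: your closing parenthetical is wrong --- $C_3$ is \emph{not} 4-CI-colorable, because any map $V(C_3)\to[4]$ leaves some color class empty, and the empty set is not a cycle isolating set of $C_3$ (we have $C_3-N[\emptyset]=C_3$); so $C_3$ is excluded because the statement genuinely fails for it, not merely ``to keep the induction clean,'' and your main argument's insistence that all four colors appear is in fact essential, not optional.
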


\begin{proof}
	If $G$ is a tree, then $G$ contains no cycles, and clearly, $G$ is 4-CI-colorable. If $G$ is a cycle, then $G \cong C_n$. Note that $n \neq 3$. Then $n \geq 4$. Let $v_1,v_2,v_3,v_4 \in V(G)$. Give a coloring $c$ of $G$ such that $c(v_i)=i$ for each $i \in [4]$, and for other vertices of $G$, we color them arbitrarily. It is easy to verify that each color class of $G$ is a cycle isolating set of $G$, and thus, $G$ is 4-CI-colorable. The result follows.
\end{proof}

\noindent\textbf{Proof of Theorem \ref{th3.1}.} Let $G \ncong C_3$ be a counterexample of minimum order $n$. Note that $\Delta(G) \leq 3$ and $G$ is claw-free. If $\Delta(G) \leq 2$, then $G \in \{P_n,C_n\}$. By Lemma \ref{lem3.4}, then $G$ is 4-CI-colorable. Assume that $\Delta(G)=3$. Then $n \geq 4$ and $G$ must contain a $C_3$. If $G \cong K_4$, then we use [4] to color $G$ such that each vertex of $G$ has a different color. It is easy to verify that $G$ is 4-CI-colorable. Assume that $G \ncong K_4$. The following several claims imply the non-existence of $G$.

\begin{claim}\label{claim3.1}
	$G$ contains no cycle of length a multiple of 4.
\end{claim}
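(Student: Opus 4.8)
The plan is to argue by contradiction using the minimality of $G$, exactly in the spirit of Claims 2.1--2.4 in Section~\ref{sec2}: suppose $G$ contains a cycle $C$ whose length is a multiple of $4$, say $C = v_1 v_2 \cdots v_{4t} v_1$ with $4t \ge 4$, and exhibit a $4$-CI-coloring of $G$, contradicting that $G$ is a counterexample. The natural colouring to try is the periodic one along $C$: set $c(v_i) \equiv i \pmod 4$ (identifying $\{0,1,2,3\}$ with $[4]$), which uses each of the four colours on exactly $t$ vertices of $C$ and, crucially, places \emph{every} vertex of $C$ in the closed neighbourhood of each colour class (each $v_i$ is adjacent on $C$ to vertices of two other colours, and is itself coloured a third, so for any colour $j$ either $v_i$ has colour $j$ or one of $v_{i-1}, v_{i+1}$ does). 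Consequently $C$ contributes no vertex to $G - N[D_j]$ for any $j$, so no cycle of $G - N[D_j]$ can use an edge of $C$.

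Next I would extend this colouring to the rest of $G$. Let $G' = G - V(C)$. First handle the easy case $G' \cong (\emptyset,\emptyset)$, where $G$ is just the cycle $C$ together with possibly some chords; since $\Delta(G)\le 3$ and $G$ is claw-free, the chord structure is very restricted, and one checks directly that the periodic colouring already isolates all cycles (any chord edge, like a $C$-edge, has both endpoints hit by every colour class). Otherwise $G' \ncong (\emptyset,\emptyset)$; by Lemma~\ref{lem3.3} and the minimality of $G$, each component of $G'$ is $4$-CI-colourable (a component isomorphic to $C_3$ would have to attach to $C$, but then that $C_3$ plus the attachment vertices/edges, being claw-free subcubic, is not all of $G$, so the whole graph $G$ being a counterexample still forces us to colour $G'$ via smaller cases --- here I would lean on the same device used in Claims~\ref{claim2.1}--\ref{claim2.4}, namely that a $C_3$-component of $G'$ can be coloured by hand once we know its (at most $3 - 2 = 1$ per vertex, hence few) edges to $V(C)$). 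So $G'$ admits a $4$-CI-colouring $\left.c\right|_{G'}$, and we take $c := \left.c\right|_{G'} \cup \left.c\right|_{C}$.

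It remains to verify that no new cycle is created in any $G - N[D_j]$ by edges between $V(C)$ and $V(G')$. Since $C$ contributes nothing to $G - N[D_j]$, any surviving cycle lies entirely in $G' - N_{G'}[D_j^{G'}]$, which is cycle-free by choice of $\left.c\right|_{G'}$; the cross edges are irrelevant because one endpoint of each such edge lies in $V(C)$ and is therefore already in $N[D_j]$. This is the analogue of Lemma~\ref{lem3.2}, but here it is even cleaner since the deleted side is all of $C$. Hence $c$ is a $4$-CI-colouring of $G$, contradicting the choice of $G$.

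The main obstacle I anticipate is not the colouring argument on $C$ itself --- that is forced and clean --- but the bookkeeping for how $C$ meets the rest of $G$: in a subcubic claw-free graph a vertex $v_i \in V(C)$ has at most one neighbour outside $C$, and claw-freeness further constrains which configurations of chords and pendant attachments can occur, so I would want to enumerate these local possibilities carefully to be sure the extension step (and the $G' \cong (\emptyset,\emptyset)$ base case) really does go through without ever producing a surviving cycle. If any awkward attachment pattern resists the naive periodic colouring, the fallback is to shift the starting index of the periodic pattern around $C$ (there are $4$ rotations) or to absorb a small offending subgraph into the set $S$ and invoke Lemma~\ref{lem3.2} directly.
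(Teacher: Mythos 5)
Your overall strategy is the same as the paper's: colour $C$ periodically $1,2,3,4$, absorb the $C_3$-components of $G-V(C)$ into the hand-coloured set $S$, get a $4$-CI-colouring of the rest by minimality and Lemma~\ref{lem3.3}, and glue via Lemma~\ref{lem3.2}. However, the central verification in your write-up is wrong. You assert that under the periodic colouring every vertex of $C$ lies in $N[D_j]$ for \emph{each} of the four colours $j$, because $v_i$ and its two $C$-neighbours carry three distinct colours. But three distinct colours cover only three of the four values of $j$: for the fourth colour $j \equiv i+2 \pmod 4$, neither $v_i$ nor $v_{i-1}$ nor $v_{i+1}$ has colour $j$, so $v_i$ survives in $G-N[D_j]$ unless it is rescued by a neighbour outside $C$. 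Thus for each colour class exactly every fourth vertex of $C$ survives, and your subsequent argument (``$C$ contributes no vertex to $G-N[D_j]$, hence cross edges are irrelevant and any surviving cycle lies in $G'$'') collapses as stated.

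The claim is still salvageable, and the repair is precisely the ``fallback'' you mention at the end and what the paper actually does: the surviving $C$-vertices for a fixed colour are pairwise non-adjacent along $C$, so each component of $G[S]-N_{G[S]}[D_S^j]$ meeting $V(C)$ has maximum degree at most $1$ (a chord between two survivors would already exhaust their degree budget), contains no cycle, and sends at most one edge to $V(G)\setminus S$ since $\Delta(G)\le 3$; Lemma~\ref{lem3.2} then shows no new cycle is created. So you should not shortcut Lemma~\ref{lem3.2} with the ``every vertex of $C$ is dominated'' claim --- you genuinely need the one-edge-per-surviving-component condition of that lemma, both for the survivors on $C$ and for the undominated pairs inside the absorbed $C_3$-components. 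With that correction your argument matches the paper's proof.
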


\begin{proof}
	Suppose to the contrary that there exists a cycle $C$ of length a multiple of 4 in $G$. Clearly, $G-V(C)$ may be disconnected and contain some $C_3$-components. As shown in Fig. \hyperlink{Fig3}{3}, let $s$ be the number of these $C_3$-components, and $\{v_1^i,v_2^i,v_3^i\}$ be the vertex set of the $i$-th $C_3$-component where $i \in [s]$. W.l.o.g., we may assume that $uv_1^i \in E(G)$ for some $u \in V(C)$. Let $S:=V(C) \cup \bigcup_{i \in [s]}\{v_1^i,v_2^i,v_3^i\}$ and $G':=G-S$. By Lemma \ref{lem3.3} and the minimality of $G$, $G'$ admits a 4-CI-coloring $\left.c\right|_{G'}$. Define the coloring $\left.c\right|_{S}$ of $S$ as follows: we color $C$ by repeating 1,2,3,4; if $\left.c\right|_{S}(u)=j$ with $j \in [4]$, then we color $\{v_1^i,v_2^i,v_3^i\}$ by $[4]\setminus \{j\}$ such that each vertex has a different color (if $\{v_1^i,v_2^i,v_3^i\}$ has more than one neighbor in $V(C)$, we only choose one of them as the vertex $u$). Denote $c:=\left.c\right|_{G'} \cup \left.c\right|_{S}$ by the present coloring of $G$. It is easy to verify that for each $i \in [4]$, the color class $D_S^i$ of $G[S]$ is a cycle isolating set of $G[S]$. Since $\Delta(G) \leq 3$ and by Lemma \ref{lem3.2}, $c$ is a 4-CI-coloring of $G$, and thus, $G$ is 4-CI-colorable. The result follows.
\end{proof}

	\begin{figure}[h!]
	\begin{center}
		\begin{tikzpicture}[scale=.48]
			\tikzstyle{vertex}=[circle, draw, inner sep=0pt, minimum size=6pt]
			\tikzset{vertexStyle/.append style={rectangle}}
			
			\vertex (1) at (-14.5,0) [scale=0.35,fill=black] {};
			\node ($v_1^1$) at (-14.5,-0.7) {$v_1^1$};
			
			\vertex (2) at (-17.5,2) [scale=0.35,fill=black] {};
			\node ($v_3^1$) at (-17.5,2.7) {$v_3^1$};

			\vertex (3) at (-17.5,-2) [scale=0.35,fill=black] {};
			\node ($v_2^1$) at (-17.5,-2.7) {$v_2^1$};
			
			\node ($C$) at (-10.5,2) {$C$};
			
			\vertex (7) at (-5.5,0) [scale=0.35,fill=black] {};
			\vertex (8) at (-11.5,0) [scale=0.35,fill=black] {};
			\node ($u$) at (-11.5,-0.7) {$u$};
			
			\vertex (9) at (-8.5,3) [scale=0.35,fill=black] {};
			\vertex (10) at (-8.5,-3) [scale=0.35,fill=black] {};
			
			\vertex (4) at (-2.5,0) [scale=0.35,fill=black] {};
			
			\draw (0,0) circle (2.5cm);
			\node ($G'$) at (0,0) {$G'$};
			
			\path
			(2) edge (3)
			(1) edge (2)
			(1) edge (3)
			(1) edge (8)
			(7) edge (9)
			(7) edge (10)
			(8) edge (9)
			(8) edge (10)
			(9) edge (10)
			(4) edge (7)
			;

		\end{tikzpicture}
	\end{center}
	\par {\footnotesize \centerline{{\bf Fig. 3.} ~The case that $G$ contains a $C_4$ where $s=1$.\hypertarget{Fig3}}}
\end{figure}

\begin{claim}\label{claim3.2}
	$G$ contains no induced cycle of length $4t+3$ for any integer $t \geq 1$.
\end{claim}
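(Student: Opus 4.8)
The plan is to derive a contradiction from the existence of an induced $(4t+3)$-cycle by reducing everything to Claim~\ref{claim3.1} (with a degenerate case handled by Lemma~\ref{lem3.4}); in particular, this claim should need neither the minimality of $G$ nor the colouring lemmas. Suppose, for contradiction, that $C=v_1v_2\cdots v_\ell v_1$ is an induced cycle of $G$ with $\ell=4t+3$ for some integer $t\ge 1$; in particular $\ell\ge 7$. I would split into two cases according to whether $C$ contains a vertex of degree $3$ in $G$.

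First, suppose every vertex of $C$ has degree $2$ in $G$. Then every neighbour in $G$ of a vertex of $C$ already lies on $C$, so $V(C)$ induces a union of components of $G$; since $G$ is connected this forces $G=C\cong C_{4t+3}$. As $4t+3\ge 7\ne 3$, Lemma~\ref{lem3.4} shows that $G$ is $4$-CI-colourable, contradicting the choice of $G$ as a counterexample.

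Now suppose some vertex $v_j\in V(C)$ has $d_G(v_j)=3$, and let $w$ be the neighbour of $v_j$ other than $v_{j-1}$ and $v_{j+1}$. Since $C$ is induced it has no chord through $v_j$, so $w\notin V(C)$. Because $C$ is induced and $\ell\ge 5$, the vertices $v_{j-1}$ and $v_{j+1}$ are distinct and non-adjacent in $G$ (an edge between them would be a chord of $C$). Hence, as $G$ is claw-free, the set $\{v_j,v_{j-1},v_{j+1},w\}$ does not induce a $K_{1,3}$; since $v_j$ is adjacent to each of the other three and $v_{j-1}v_{j+1}\notin E(G)$, it follows that $w$ is adjacent to $v_{j-1}$ or to $v_{j+1}$, say $wv_{j+1}\in E(G)$ (the other case is symmetric). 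Then $v_1\cdots v_j\,w\,v_{j+1}\cdots v_\ell v_1$, obtained from $C$ by replacing the edge $v_jv_{j+1}$ with the path $v_jwv_{j+1}$, is a cycle of $G$ of length $\ell+1=4t+4$, a multiple of $4$; this contradicts Claim~\ref{claim3.1}. Therefore $G$ contains no induced cycle of length $4t+3$.

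I do not expect a genuine obstacle here: the argument rests on the single structural observation that in a claw-free subcubic graph a degree-$3$ vertex lying on a chordless cycle must form a triangle with one of its two cycle-neighbours, which lets one ``inflate'' the induced $(4t+3)$-cycle into a $(4t+4)$-cycle. The only points that need a little care are verifying $w\notin V(C)$ and $v_{j-1}v_{j+1}\notin E(G)$ (both coming from $C$ being induced of length at least $5$), and remembering to dispatch the degenerate case $G\cong C_{4t+3}$ via Lemma~\ref{lem3.4} rather than Claim~\ref{claim3.1}.
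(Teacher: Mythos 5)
Your proposal is correct and follows essentially the same route as the paper: rule out $G\cong C$ via Lemma~\ref{lem3.4}, take a vertex of $C$ with a neighbour off the cycle, use claw-freeness plus the absence of chords to force that neighbour to be adjacent to a cycle-neighbour as well, and thereby produce a $(4t+4)$-cycle contradicting Claim~\ref{claim3.1}. The extra care you take (verifying $w\notin V(C)$ and $v_{j-1}v_{j+1}\notin E(G)$) only makes explicit what the paper leaves implicit.
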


\begin{proof}
	Suppose to the contrary that there exists an induced cycle $C$ of length $4t+3$ for some integer $t \geq 1$ in $G$. By Lemma \ref{lem3.4}, $G \neq C$. Hence, there exists a vertex $v$ in $V(G)\setminus V(C)$ such that $uv \in E(G)$ for some $u \in V(C)$. Let $N_C(u)=\{u_1,u_2\}$. Since $C$ is induced and $G$ is claw-free, $u_iv \in E(G)$ for some $i \in [2]$. Then we get a cycle of length $4t+4$ formed by $V(C) \cup \{v\}$, a contradiction to Claim \ref{claim3.1}.
\end{proof}

\begin{claim}\label{claim3.3}
	$G$ contains no induced cycle $C$ of length at least 4, where $C$ joins to $G-V(C)$ by at most two edges.
\end{claim}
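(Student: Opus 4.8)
The plan is to run the same reduction as in Claims~\ref{claim3.1} and~\ref{claim3.2}: suppose the minimal counterexample $G$ contains such an induced cycle $C$, and derive a 4-CI-coloring of $G$, a contradiction. Since $|V(C)|\ge 4$ and $G$ has no cycle of length a multiple of $4$ by Claim~\ref{claim3.1}, in fact $|V(C)|\ge 5$; hence $G\ne C$ (otherwise Lemma~\ref{lem3.4} applies), $G-V(C)\ne\emptyset$, and connectivity forces between one and two edges between $V(C)$ and $V(G)\setminus V(C)$. As $\Delta(G)\le 3$ and each vertex of $C$ already has two neighbours on $C$, these edges leave $C$ at distinct vertices, say $a$, and $b$ if there are two of them.

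Following Claim~\ref{claim3.1}, I would absorb the troublesome components: let $\mathcal T$ be the set of $C_3$-components of $G-V(C)$, set $S:=V(C)\cup\bigcup_{T\in\mathcal T}V(T)$ and $G':=G-S$. Every component of $G'$ is a non-$C_3$ component of $G-V(C)$, hence a connected claw-free subcubic graph of order $<n$ not isomorphic to $C_3$; so, when $G'\ne\emptyset$, Lemma~\ref{lem3.3} and the minimality of $G$ supply a 4-CI-coloring $c|_{G'}$. The only edges leaving $S$ are among the $\le 2$ edges leaving $C$; since each member of $\mathcal T$ is joined to $C$, we get $|\mathcal T|\le 2$, and if $|\mathcal T|=2$ or some $C_3\in\mathcal T$ uses both exit edges then $G'=\emptyset$. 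Thus, apart from the case $G'=\emptyset$ (where $G$ itself is $C$ with one or two pendant $C_3$'s, or $C$ with a $C_3$ attached by two edges), $G[S]$ is $C$ itself with one or two edges to $G'$, or $C$ with a single pendant $C_3$ and one edge to $G'$. In each case I would build a 4-CI-coloring $c|_S$ of $G[S]$ in which every component of $G[S]-N_{G[S]}[D_S]$ meets $V(G)\setminus S$ in at most one edge, and then glue it to $c|_{G'}$ via Lemma~\ref{lem3.2}.

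The colorings are elementary. If $G[S]=C$ and at most one exit edge reaches $G'$, any 4-coloring of the cycle using all four colors works, since deleting a closed neighbourhood on a cycle leaves paths and only one edge can escape any one of them. If $G[S]=C$ and both exit edges reach $G'$ at $a\ne b$, I would instead color the (at least four) vertices of $N_C[a]\cup N_C[b]$ with all four colors and finish arbitrarily so that every color still occurs on $C$; then every color class dominates $a$ or $b$, so no surviving component of $C-N_C[D_S]$ contains both, and each meets $G'$ in at most one edge. When a pendant $C_3=wxy$ is present, with $w$ adjacent to $a\in V(C)$, I would give $a$ the color missing from $\{w,x,y\}$ and distribute all four colors over $V(C)$; every color class then dominates a vertex of each triangle and of $C$, while the lone exit edge at $b$ keeps Lemma~\ref{lem3.2}'s hypothesis. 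The $G'=\emptyset$ cases are closed by the same explicit colorings; in the $C_3$-attached-by-two-edges case one uses that the four-vertex set $N[x]$, with $x$ an attachment vertex of the triangle, can be colored rainbow so that each color class dominates $x$, killing the triangle and every mixed cycle through it, after which it suffices to have all colors on $C$.

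The main obstacle is the case $G[S]=C$ with two exit edges landing on $a$ and $b$: one must 4-color a cycle whose length is not a multiple of $4$ so that each of the four color classes either covers $a$ or $b$ or separates them in the resulting union of paths, and a periodic ``$1,2,3,4$'' coloring as in Claim~\ref{claim3.1} need not achieve this. The resolution is the targeted rainbow coloring of $N_C[a]\cup N_C[b]$, so the heart of the argument is checking that $|N_C[a]\cup N_C[b]|\ge 4$ always holds — which uses $|V(C)|\ge 5$, i.e. the absence of $C_4$ — and that the remaining vertices of $C$ can still be colored so that every color appears on $C$.
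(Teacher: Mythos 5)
Your proposal is correct, but it reaches the conclusion by a noticeably different route than the paper, so a comparison is worthwhile. The paper's first move is to exploit claw-freeness together with the inducedness of $C$: any edge $uv$ with $u\in V(C)$, $v\notin V(C)$ forces a companion edge from $v$ to one of $u$'s two neighbours on $C$ (else those four vertices induce a claw). With at most two exit edges this pins the structure down completely — there are exactly two exit edges, they land on two \emph{consecutive} vertices $u_1,u_2$ of $C$, and they meet a \emph{single} outside vertex $v$; consequently $G-V(C)$ is connected, is not a $C_3$, and there are no pendant triangles to absorb. The paper then applies minimality directly and colours $C$ with a periodic $1,2,3,4$ pattern plus a short tail (which is why it invokes Claims \ref{claim3.1} and \ref{claim3.2} to know $|V(C)|\equiv 1$ or $2 \pmod 4$). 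You instead skip this structural forcing and enumerate configurations — a single exit edge, two exit edges to distinct outside vertices, pendant $C_3$-components attached by one or two edges — all of which are in fact impossible in a claw-free graph, so most of your case analysis is vacuous (for instance, a triangle component attached to the induced cycle by one edge already creates a claw at the attachment vertex). Your handling of the one case that genuinely occurs is nevertheless sound, and your colouring device is different and arguably cleaner: rainbow-colouring the four (distinct, since $|V(C)|\ge 5$) vertices of $N_C[a]\cup N_C[b]$ guarantees every colour class dominates $a$ or $b$, so no surviving component of $C-N_C[D]$ carries two exit edges; this is length-agnostic and avoids the mod-4 bookkeeping. In short: same decomposition and same use of Lemmas \ref{lem3.2} and \ref{lem3.3}, but the paper buys brevity by using claw-freeness up front, while you buy robustness (and pay in extra, mostly empty, cases) by not using it.
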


\begin{proof}
	Suppose to the contrary that there exists an induced cycle $C$ such that $C$ joins to $G-V(C)$ by at most two edges. By Lemma \ref{lem3.4}, $G \neq C$. By Claims \ref{claim3.1} and \ref{claim3.2}, $|V(C)|=4t+i$ for some  $i \in [2]$ and integer $t \geq 1$. Let $G':=G-V(C)$. Since $C$ is induced and $G$ is claw-free, $C$ joins to $G'$ by exactly two edges $\{u_1v,u_2v\}$ in $G$, where $u_1,u_2$ are two consecutive vertices of $C$ and $v \in V(G')$ with $N(V(C))=\{v\}$. Since $\Delta(G) \leq 3$, $G' \ncong C_3$. Clearly, $G'$ is connected. By the minimality of $G$, $G'$ admits a 4-CI-coloring $\left.c\right|_{G'}$. We define a coloring $\left.c\right|_{C}$ of $C$ as follows: if $i=1$, then we use repeating 1,2,3,4 followed by an extra 3 (starting with $u_1$ and ending with $u_2$) to color $C$; if $i=2$, then we use repeating 1,2,3,4 followed by an extra 3,4 (starting with $u_1$ and ending with $u_2$) to color $C$. By Lemma \ref{lem3.2}, $c:=\left.c\right|_{C} \cup \left.c\right|_{G'}$ is a 4-CI-coloring of $G$, and $G$ is 4-CI-colorable.
\end{proof}

\begin{claim}\label{claim3.4}
	$G$ contains no induced cycle of length $4t+2$ for any integer $t \geq 1$.
\end{claim}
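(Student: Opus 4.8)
The plan is to assume, for contradiction, that $G$ contains an induced cycle $C$ of length $4t+2$ with $t\ge 1$, and then to exploit the way $C$ attaches to the rest of $G$ to produce a cycle whose length is a multiple of $4$, contradicting Claim~\ref{claim3.1}.

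First I would pin down the attachment structure of $C$. By Lemma~\ref{lem3.4} we have $G\ne C$, so some edge leaves $C$. Suppose $v\in V(G)\setminus V(C)$ has a neighbour $u$ on $C$, and let $N_C(u)=\{u_1,u_2\}$; since $C$ is induced and has length at least $6$, the vertices $u_1,u_2$ are non-adjacent, so claw-freeness applied to $u$ forces $vu_1\in E(G)$ or $vu_2\in E(G)$, say $vu_1\in E(G)$. Thus $v$ is adjacent to the two consecutive vertices $u,u_1$ of $C$. I would then rule out a third neighbour $w$ of $v$ on $C$: if $w$ were consecutive on $C$ with $u$ or with $u_1$, then $v$ together with three consecutive vertices of $C$ would contain a $4$-cycle, contradicting Claim~\ref{claim3.1}; and otherwise, writing $N_C(w)=\{w_1,w_2\}$ (again $w_1w_2\notin E(G)$), claw-freeness applied to $w$ would force $v$ adjacent to $w_1$ or $w_2$, giving $d_G(v)\ge 4$, impossible since $\Delta(G)\le 3$. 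Hence every vertex outside $C$ with a neighbour on $C$ is adjacent to exactly two consecutive vertices of $C$, forming an ``ear''; moreover two distinct such vertices cannot share a vertex of $C$ (that vertex would then have degree at least $4$), so these ear-pairs are pairwise disjoint.

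The key quantitative step is a parity observation: every edge between $V(C)$ and $V(G)\setminus V(C)$ belongs to exactly one ear, and each ear contributes exactly two such edges, so the number of boundary edges is even. Claim~\ref{claim3.3}, applied to the induced cycle $C$ of length at least $4$, says this number is at least $3$, hence it is at least $4$; therefore there are at least two ears, with distinct external vertices $v_1,v_2$ adjacent to disjoint consecutive pairs $\{x_1,y_1\}$ and $\{x_2,y_2\}$ of $C$. Now I would reroute $C$ through both: replacing the edge $x_1y_1$ by the path $x_1v_1y_1$ and the edge $x_2y_2$ by the path $x_2v_2y_2$ produces a simple cycle on vertex set $V(C)\cup\{v_1,v_2\}$ of length $(4t+2)+2=4(t+1)$, a multiple of $4$, which contradicts Claim~\ref{claim3.1} and yields the desired contradiction.

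I expect the only delicate point to be the structural step forcing each external vertex to meet $C$ in exactly two consecutive vertices (in particular the short case analysis excluding a third neighbour on $C$, which is where Claim~\ref{claim3.1} and the degree bound $\Delta(G)\le 3$ enter), together with the parity remark that upgrades Claim~\ref{claim3.3}'s bound from $3$ to $4$; once two vertex-disjoint ears are in hand, the rerouting and the length count are routine. It is worth noting that this ``$+2$'' rerouting is exactly what singles out length $\equiv 2\pmod{4}$: it is precisely the residue class for which adding two vertices lands on a multiple of $4$.
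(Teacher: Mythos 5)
Your proposal is correct and follows essentially the same route as the paper: use Claim~\ref{claim3.3} to force at least three edges leaving $C$, use claw-freeness, the induced hypothesis and $\Delta(G)\le 3$ to show the attachments come in consecutive "ear" pairs (ruling out a vertex with three neighbours on $C$ via a $4$-cycle and Claim~\ref{claim3.1}), and then reroute $C$ through two disjoint ears to obtain a cycle of length $4t+4$, contradicting Claim~\ref{claim3.1}. Your explicit parity remark upgrading "at least $3$" to "at least $4$" boundary edges is a detail the paper leaves implicit, but the argument is the same.
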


\begin{proof}
	Suppose to the contrary that there exists an induced cycle $C$ of length $4t+2$ for some integer $t \geq 1$ in $G$. Let $G':=G-V(C)$. By Claim \ref{claim3.3}, $C$ joins to $G'$ by at least three edges. Since $\Delta(G) \leq 3$, $G$ is claw-free and $C$ is induced, either there exists a vertex $v \in V(G')$ such that $vu_1,vu_2,vu_3 \in E(G)$, where $u_1,u_2,u_3$ are three consecutive vertices of $C$, or there exist at least two vertices $v_1,v_2 \in V(G)$ such that $u_1^1v_1,u_2^1v_1,u_1^2v_2,u_2^2v_2 \in E(G)$, where for each $i \in [2]$, $u_1^i,u_2^i$ are two consecutive vertices of $C$, as shown in Fig \hyperlink{Fig4}{4}. The first situation contradicts Claim \ref{claim3.1} since $\{v,u_1,u_2,u_3\}$ forms a cycle of length 4. The second situation also contradicts Claim \ref{claim3.1} since $V(C) \cup \{v_1,v_2\}$ forms a cycle of length $4t+4$.
\end{proof}

	\begin{figure}[h!]
	\begin{center}
		\begin{tikzpicture}[scale=.48]
			\tikzstyle{vertex}=[circle, draw, inner sep=0pt, minimum size=6pt]
			\tikzset{vertexStyle/.append style={rectangle}}
			
			\vertex (1) at (0,0) [scale=0.35,fill=black] {};
			
			\vertex (2) at (0,7) [scale=0.35,fill=black] {};
			\node ($u_1$) at (0.1,6.3) {$u_1$};
			
			\vertex (3) at (2,2) [scale=0.35,fill=black] {};
			\node ($u_2$) at (1.5,4.7) {$u_2$};		
			
			\vertex (4) at (2,5) [scale=0.35,fill=black] {};
			\node ($u_3$) at (1.5,2.3) {$u_3$};
			
			\vertex (5) at (-2,2) [scale=0.35,fill=black] {};	

            \vertex (6) at (-2,5) [scale=0.35,fill=black] {};
            
            \vertex (13) at (3.5,6) [scale=0.35,fill=black] {};
            \node ($v$) at (3.9,6) {$v$};
            
            \vertex (7) at (10,0) [scale=0.35,fill=black] {};
            \node ($u_2^2$) at (10,1) {$u_2^2$};
            
            \vertex (8) at (10,7) [scale=0.35,fill=black] {};
            \node ($u_1^1$) at (10.1,6) {$u_1^1$};
            
            \vertex (9) at (12,2) [scale=0.35,fill=black] {};
            \node ($u_2^1$) at (11.4,2.3) {$u_2^1$};
            
            \vertex (10) at (12,5) [scale=0.35,fill=black] {};
            \node ($u_1^2$) at (11.4,4.7) {$u_1^2$};
            
            \vertex (11) at (8,2) [scale=0.35,fill=black] {};		
            
            \vertex (12) at (8,5) [scale=0.35,fill=black] {};
            
            \vertex (14) at (12,0) [scale=0.35,fill=black] {};
            \node ($v_2$) at (12.6,0.2) {$v_2$};
            
            \vertex (15) at (12,7) [scale=0.35,fill=black] {};
            \node ($v_1$) at (12.6,6.8) {$v_1$};
			
			\node ($C$) at (0,3.5) {$C$};
			
			\node ($C$) at (10,3.5) {$C$};
			
			\path
			(1) edge (3)
			(1) edge (5)
			(3) edge (4)
			(6) edge (5)
			(2) edge (6)
			(2) edge (4)
			(13) edge (4)
			(13) edge (3)
			(13) edge (2)
			
			(7) edge (9)
			(7) edge (11)
			(9) edge (10)
			(12) edge (11)
			(8) edge (12)
			(8) edge (10)
			(14) edge (7)
			(14) edge (9)
			(15) edge (8)
			(15) edge (10)
			;
			
			\path
			(1) edge (0,-1.2) [dashed]
			(5) edge (-3.2,2)
			(6) edge (-3.2,5)
			(15) edge (13,8)
			(14) edge (13,-1)
			(11) edge (6.8,2)
			(12) edge (6.8,5)
			;
			
		\end{tikzpicture}
	\end{center}
	\par {\footnotesize \centerline{{\bf Fig. 4.} ~The possible cases that $G$ contains an induced $C_6$.\hypertarget{Fig4}}}
\end{figure}

\begin{claim}\label{claim3.5}
	$G$ contains no induced cycle of length $4t+1$ for any integer $t \geq 2$.
\end{claim}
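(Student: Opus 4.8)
The plan is to suppose that $G$ contains an induced cycle $C$ of length $4t+1$ with $t\ge 2$ and to analyse how $C$ attaches to $G-V(C)$. By Lemma~\ref{lem3.4}, $G\neq C$, and by Claim~\ref{claim3.3} at least three edges join $V(C)$ to $G-V(C)$. The first step is a local structure statement: if $v\in V(G)\setminus V(C)$ has a neighbour on $C$, then since $C$ is induced, $G$ is claw-free, $\Delta(G)\le 3$ and $G$ has no cycle of length a multiple of $4$ (Claim~\ref{claim3.1}), the vertex $v$ is adjacent to exactly two consecutive vertices of $C$ and has exactly one further neighbour, which lies outside $V(C)$; moreover any two such ``ear vertices'' have disjoint pairs of neighbours on $C$ (a common one would have degree $\ge 4$). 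Hence $C$ is joined to $G-V(C)$ by exactly $2m$ edges, with ear vertices $v_1,\dots,v_m$ and consecutive pairs $P_j=\{a_j,b_j\}$, and $m\ge 2$. If $m\ge 3$, I would replace, in a traversal of $C$, each of the edges $a_jb_j$ $(j\in\{1,2,3\})$ by the path $a_jv_jb_j$; this yields a cycle on $V(C)\cup\{v_1,v_2,v_3\}$ of length $(4t+1)+3=4t+4$, contradicting Claim~\ref{claim3.1}.

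It remains to treat $m=2$. Then $P_1,P_2$ split $C$ into two arcs of lengths $p,q\ge 1$ with $p+q=4t-1\ge 7$; since $p+q\equiv 3\pmod 4$, at least one of $p,q$ is not $\equiv 2\pmod 4$, say $q$. If $v_1v_2\in E(G)$, then using the edge $v_1v_2$, the edges $v_ja_j,v_jb_j$, the chords $a_jb_j$ and one of the two arcs of $C$, one can exhibit cycles of each of the lengths $q+3$, $q+4$ and $q+5$ in $G$; as these are three consecutive integers whose one missing residue modulo $4$ is $q+2\not\equiv 0$, one of them is a multiple of $4$, again contradicting Claim~\ref{claim3.1}.

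The remaining case, $m=2$ with $v_1v_2\notin E(G)$, is the hard one, and I would treat it via Lemma~\ref{lem3.2}. Let $S=V(C)\cup\{v_1,v_2\}$, enlarged by the vertex sets of all $C_3$-components of $G-(V(C)\cup\{v_1,v_2\})$; if $S=V(G)$ the coloring below finishes the argument directly, and otherwise each component of $G-S$ is a connected claw-free subcubic graph of smaller order that is not $C_3$, so $G-S$ is $4$-CI-colorable by Lemma~\ref{lem3.3} and the minimality of $G$. The aim is a $4$-CI-coloring of $G[S]$ for which, in each color class $D$, every component of $G[S]-N_{G[S]}[D]$ sends at most one edge out of $S$; since the only edges leaving $S$ are the (at most) one at $v_1$ and the one at $v_2$, it is enough to keep $v_1$ and $v_2$ in different surviving components in every color class. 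I would give $v_1,v_2$ the color $4$, color each absorbed triangle with three distinct colors, and color each $P_j$ together with one suitable further neighbour of $a_j$ on $C$ using the colors $\{1,2,3\}$ so that the closed neighbourhood of $a_j$ receives all four colors; the rest of $C$ may be colored arbitrarily. Then every cycle of $G[S]$ other than an absorbed triangle passes through $a_1$ or $a_2$, while the absorbed triangles are broken by their own three colors; hence every color class is a cycle isolating set of $G[S]$, and the colors can be arranged so that $v_1$ survives in exactly one color class and $v_2$ in a different one. Lemma~\ref{lem3.2} then yields that $G$ is $4$-CI-colorable, a contradiction. I expect the bulk of the work here to be the bookkeeping in this last subcase: making the requirement that $N[a_1]$ and $N[a_2]$ each receive all four colors simultaneously satisfiable when an arc between or around the two ears has length $1$ or $2$, or when one absorbed triangle is attached to both ears — and this is precisely where the hypothesis $t\ge 2$ (equivalently $p+q\ge 7$) is used.
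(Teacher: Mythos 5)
Your opening reduction is exactly the paper's: claw-freeness plus $C$ being induced forces every outside neighbour of $C$ to attach to exactly two consecutive vertices, Claim~\ref{claim3.1} rules out three ear vertices (via the $(4t+4)$-cycle) and Claim~\ref{claim3.3} rules out fewer than two, so there are exactly two ears $v_1,v_2$ and four attaching edges. From there you diverge. The subcase $v_1v_2\in E(G)$ is handled by a clean parity argument (cycles of lengths $q+3,q+4,q+5$ with $q\not\equiv 2\pmod 4$) that the paper does not need; it is correct as far as I can check. For the main subcase you enlarge $S$ to $V(C)\cup\{v_1,v_2\}$ plus absorbed triangles and aim for a ``rainbow $N[a_1]$, $N[a_2]$'' coloring. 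The paper instead keeps $S=V(C)$ only: since $N(V(C))=\{v_1,v_2\}$ and each $v_i$ already spends two of its three edges on $C$, no component of $G-V(C)$ can be a $C_3$, so minimality applies directly and one only has to exhibit a periodic coloring of $C$ (repeating $1,2,3,4$ plus an extra $3$, with one transposition when the four attachment vertices are consecutive) for which no surviving subpath of $C$ contains two attachment vertices. This avoids the entire case analysis you defer to ``bookkeeping''.

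Two concrete problems remain in your last subcase. First, the sentence ``the absorbed triangles are broken by their own three colors'' is false as stated: a triangle coloured $\{2,3,4\}$ whose only neighbours outside itself are $v_1,v_2$ (both coloured $4$) survives intact in $G[S]-N[D_1]$, since none of its vertices is coloured $1$ or adjacent to a vertex coloured $1$. You must use exactly the colours $\{1,2,3\}$ on each absorbed triangle, so that the missing class is $4$ and the attachment vertex is dominated through $v_1$ or $v_2$ -- this is the device of Claim~\ref{claim3.1}, and it needs to be said. Second, the heart of the subcase -- that one can simultaneously make $N[a_1]$ and $N[a_2]$ rainbow, keep $v_1$ and $v_2$ surviving in distinct (or no) colour classes, and do so when an arc has length $1$ or $2$ or a triangle is shared between the two ears -- is exactly what you announce as ``the bulk of the work'' and do not carry out. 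The claim that every non-triangle cycle of $G[S]$ meets $a_1$ or $a_2$ is correct, and I believe the colouring can be completed, but as written the proposal establishes the claim only up to this unexecuted verification; either finish that case analysis or switch to the paper's lighter $S=V(C)$ colouring, which disposes of it in one stroke.
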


\begin{proof}
	Suppose to the contrary that there exists an induced cycle $C$ of length $4t+1$ for some integer $t \geq 2$ in $G$. Let $G':=G-V(C)$. As noted in the proof of Claim \ref{claim3.4}, there exists no vertex $v$ in $G'$ such that $v$ joins to three consecutive vertices of $C$ (by Claim \ref{claim3.1}). Moreover, there exist at most two vertices $v_1,v_2$ in $G'$ such that for each $i \in [2]$, $v_i$ joins to two consecutive vertices $u_1^i,u_2^i$ of $C$ (also by Claim \ref{claim3.1}). By Claim \ref{claim3.3}, $C$ joins to $G'$ by at least three edges. Since $\Delta(G) \leq 3$, $G$ is claw-free and $C$ is induced, we know that $C$ joins to $G'$ by exactly four edges, and $N(V(C))=\{v_1,v_2\}$ where $v_1,v_2$ are defined as above. It is clear that $G'$ may be disconnected but contains no $C_3$-components. By Lemma \ref{lem3.3} and the minimality of $G$, $G'$ admits a 4-CI-coloring $\left.c\right|_{G'}$. We define a coloring $\left.c\right|_{C}$ of $C$ as follows: if $u_1^1,u_2^1,u_1^2,u_2^2$ are four consecutive vertices (clockwise) of $C$, then we first use repeating 1,2,3,4 followed by an extra 3 (starting with $u_2^1$ and ending with $u_1^1$) to color $C$, and then exchange the colors of $u_2^2$ and $u_3$, where $u_3$ is the next vertex of $u_2^2$ on $C$; otherwise, we straightforward use repeating 1,2,3,4 followed by an extra 3 (starting with $u_2^1$ and ending with $u_1^1$) to color $C$. For both situations, by Lemma \ref{lem3.2}, $c:=\left.c\right|_{C} \cup \left.c\right|_{G'}$ is a 4-CI-coloring of $G$, and $G$ is 4-CI-colorable.
\end{proof}

	\begin{figure}[h!]
	\begin{center}
		\begin{tikzpicture}[scale=.48]
			\tikzstyle{vertex}=[circle, draw, inner sep=0pt, minimum size=6pt]
			\tikzset{vertexStyle/.append style={rectangle}}
			
			\vertex (1) at (0,0) [scale=0.35,fill=black] {};
			\node ($v_2$) at (0,0.7) {$v_2$};
			\node ($u$) at (0.1,-0.7) {$(u)$};
			
			\vertex (2) at (0,7) [scale=0.35,fill=black] {};
			\node ($u_1^1$) at (0,6.1) {$u_1^1$};
			
			\vertex (3) at (2,2) [scale=0.35,fill=black] {};
			\node ($u_1^2$) at (1.5,2.55) {$u_1^2$};
			
			\vertex (4) at (2,5) [scale=0.35,fill=black] {};
			\node ($u_2^1$) at (1.5,4.6) {$u_2^1$};
			
			\vertex (5) at (-2,2) [scale=0.35,fill=black] {};
			\node ($u_2^2$) at (-1.5,2.55) {$u_2^2$};
			
			\vertex (6) at (-2,5) [scale=0.35,fill=black] {};
			\node ($u_3$) at (-1.5,4.6) {$u_3$};
			
			\vertex (7) at (2,7) [scale=0.35,fill=black] {};
			\node ($v_1$) at (2.6,6.7) {$v_1$};
			
			\node ($C$) at (0,4) {$C$};
			
			\node ($C'$) at (-1.5,6.5) {$C'$};
			
			\vertex (8) at (-6,5) [scale=0.35,fill=black] {};
			\vertex (9) at (-10,5) [scale=0.35,fill=black] {};
			\vertex (10) at (-8,2) [scale=0.35,fill=black] {};
			
			\node ($H$) at (-8,3.7) {$H$};
			
			\draw (8,3.5) circle (2cm);
			\node ($G''$) at (8,3.7) {$G''$};
			
			\vertex (11) at (8,5.5) [scale=0.35,fill=black] {};
			
			\path
			(1) edge (3)
			(1) edge (5)
			(3) edge (4)
			(6) edge (5)
			(2) edge (6)
			(2) edge (4)
			(3) edge (5)
			(2) edge (7)
			(4) edge (7)
			(8) edge (9)
			(9) edge (10)
			(8) edge (10)
			;
			
			\draw (2,7)..controls (4,8) and (6,8) ..(8,5.5);
			
			\draw (0,0)..controls (-3,-1) and (-6,-1) ..(10);
			
		\end{tikzpicture}
	\end{center}
	\par {\footnotesize \centerline{{\bf Fig. 5.} ~The case that $G$ contains an induced $C_5$.\hypertarget{Fig5}}}
\end{figure}

\begin{claim}\label{claim3.6}
	$G$ contains no induced cycle of length $5$.
\end{claim}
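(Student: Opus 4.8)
The plan is to suppose for contradiction that $G$ has an induced $5$-cycle $C=x_1x_2x_3x_4x_5$, to set $G':=G-V(C)$ (which is nonempty, since $G$, being a counterexample, is not a cycle by Lemma~\ref{lem3.4}), and then to derive that $G$ is $4$-CI-colorable. The point is that Claims~\ref{claim3.1} and \ref{claim3.3}, together with claw-freeness and $\Delta(G)\le3$, force the attachment of $C$ to $G'$ down to essentially one configuration.

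First I would pin down that local structure, in the spirit of Claims~\ref{claim3.4}--\ref{claim3.5}. No vertex of $G'$ can have three neighbours on $C$: since $\alpha(C_5)=2$, among any three vertices of $C$ two are consecutive, and then a $4$-cycle through that vertex appears, contradicting Claim~\ref{claim3.1}. If $w\in V(G')$ has two neighbours on $C$, they must be consecutive (otherwise their common neighbour on $C$ yields a $4$-cycle), so $w$ forms a triangle with them; call $w$ an \emph{apex}. Since $\Delta(G)\le 3$, distinct apices use vertex-disjoint edges of $C$, so there are at most two apices. If some $x_i$ had a neighbour $w\in V(G')$ that is not an apex, then $N(x_i)=\{x_{i-1},x_{i+1},w\}$, and as $x_{i-1}x_{i+1}\notin E(G)$, claw-freeness would force $w\sim x_{i-1}$ or $w\sim x_{i+1}$, i.e.\ $w$ is an apex, a contradiction. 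Hence every neighbour of $C$ in $G'$ is an apex, so $C$ sends exactly $2a$ edges to $G'$, where $a\le2$ is the number of apices; Claim~\ref{claim3.3} gives $2a\ge 3$, so $a=2$. After a rotation of $C$ I may assume the apices are $v_1$ (with $v_1x_1,v_1x_2\in E(G)$) and $v_2$ (with $v_2x_3,v_2x_4\in E(G)$); then $x_1,x_2,x_3,x_4$ have degree $3$, $x_5$ has degree $2$, and $v_1\not\sim v_2$ (else $v_1v_2x_3x_2v_1$ is a $4$-cycle).

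Next I would put $S:=V(C)\cup\{v_1,v_2\}$, enlarged by every $C_3$-component of $G-S$ (such a component attaches to $S$ through a single edge, at $v_1$ or at $v_2$), colouring each such triangle with $[4]$ minus the colour of its attaching vertex, exactly as in the proof of Claim~\ref{claim2.1}. On the core I would use
\[
 c(x_1)=3,\quad c(x_2)=1,\quad c(x_3)=2,\quad c(x_4)=3,\quad c(x_5)=c(v_1)=c(v_2)=4,
\]
whose colour classes are $\{x_2\}$, $\{x_3\}$, $\{x_1,x_4\}$, $\{x_5,v_1,v_2\}$. The last two classes dominate $S$, while $G[S]-N[\{x_2\}]$ is the path $x_5x_4v_2$ and $G[S]-N[\{x_3\}]$ is the path $x_5x_1v_1$; since $x_1,x_3,x_4,x_5$ send no edge out of $S$ and $v_1,v_2$ send at most one each, every leftover component meets $V(G)\setminus S$ in at most one edge, and an analogous check works once the pendant triangles are merged in. Then $G-S$ is a disjoint union of claw-free subcubic graphs, none isomorphic to $C_3$, so it is $4$-CI-colorable by the minimality of $G$ and Lemma~\ref{lem3.3}, and Lemma~\ref{lem3.2} combines the two colourings into a $4$-CI-coloring of $G$, the desired contradiction.

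The structural reduction is the heart of the argument; the only delicate point afterwards is the gluing, because a leftover path inside $S$ joined to $G-S$ by two edges could close into a cycle through $G-S$. This is exactly why $S$ must swallow the apices $v_1,v_2$ (making the four outgoing edges of $C$ internal) and any triangle hanging off them, and why $v_1$ and $v_2$ receive the same colour: they disappear together in the class $\{x_5,v_1,v_2\}$, and in the classes $\{x_2\}$ and $\{x_3\}$ exactly one of them survives, each time inside a path that leaves $S$ by a single edge. The remaining task, checking that every colour class isolates every cycle of the (possibly augmented) $G[S]$, is routine but should be carried out for each placement of the pendant triangles.
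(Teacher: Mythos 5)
Your proposal is correct and follows essentially the same route as the paper: both reduce the attachment of the induced $C_5$ to exactly two vertices $v_1,v_2$, each adjacent to two consecutive cycle vertices at disjoint edges whose endpoints are four consecutive vertices, then take $S$ to be the five cycle vertices together with $v_1,v_2$ and any pendant $C_3$-components, and finish via the minimality of $G$ with Lemmas~\ref{lem3.2} and \ref{lem3.3}. The only difference is the explicit colouring of the seven-vertex core (the paper colours it as a $7$-cycle by $1,2,3,4,1,2,4$), and your alternative assignment checks out.
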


\begin{proof}
	Suppose to the contrary that there exists an induced cycle $C$ of length $5$ in $G$. Let $G':=G-V(C)$. As noted in the proof of Claim \ref{claim3.5}, there exist exactly two vertices $v_1,v_2$ in $G'$ such that for each $i \in [2]$, $v_i$ joins to two consecutive vertices $u_1^i,u_2^i$ of $C$, and $C$ joins to $G'$ by exactly four edges $u_1^1v_1,u_2^1v_1,u_1^2v_2,u_2^2v_2$ in $G$. Since $|V(C)|=5$, $u_1^1,u_2^1,u_1^2,u_2^2$ are four consecutive vertices (clockwise) of $C$. Let $u_3$ be the fifth vertex of $C$. As in Fig. \hyperlink{Fig5}{5}, $v_1u_2^1u_1^2v_2u_2^2u_3u_1^1v_1$ is a cycle $C'$ of length 7 in $G$, and $G-V(C')$ contains at most two components that may be isomorphic to $C_3$. Let $S$ be the union of $V(C')$ and the vertices of these possible $C_3$-components of $G-V(C')$, and let $G'':=G-S$. By Lemma \ref{lem3.3} and the minimality of $G$, $G''$ admits a 4-CI-coloring $\left.c\right|_{G''}$. Define a coloring $\left.c\right|_{S}$ of $S$ as follows: we use 1,2,3,4,1,2,4 to color $C'$ (starting with $u_1^1$ and ending with $u_3$); if $\left.c\right|_{S}(u)=j$ with $j \in \{1,4\}$, then we color $H$ by $[4] \setminus \{j\}$ such that each vertex has a different color, where $u \in \{v_1,v_2\}$ is a vertex joined to a $C_3$-component $H$ of $G-V(C')$ in $G[S]$. By Lemma \ref{lem3.2}, $c:=\left.c\right|_{S} \cup \left.c\right|_{G''}$ is a 4-CI-coloring of $G$, and $G$ is 4-CI-colorable.
\end{proof}

\begin{claim}\label{claim3.7}
	Each $K_3$ in $G$ is separating.
\end{claim}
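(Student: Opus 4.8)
The plan is to argue by contradiction. Suppose some $K_3$ in $G$, say $H$ with $V(H)=\{x_1,x_2,x_3\}$, is non-separating, so that $G':=G-V(H)$ is connected; note $G'\ne(\emptyset,\emptyset)$ since $n\geq 4$. Because $\Delta(G)\leq 3$, each $x_i$ has at most one neighbor outside $H$. Let $I\subseteq[3]$ consist of those $i$ with $N(x_i)\cap V(G')\ne\emptyset$, and for $i\in I$ let $v_i$ be the unique neighbor of $x_i$ in $G'$; connectivity of $G$ gives $|I|\geq 1$. The goal is to produce a $4$-CI-coloring of $G$, contradicting the minimality of $G$.

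First I would prune the possible attachment patterns using the earlier claims. If $v_i=v_j$ for some $i\ne j$, say $\{i,j,k\}=[3]$, then $v_ix_ix_kx_jv_i$ is a $4$-cycle in $G$, contradicting Claim \ref{claim3.1}; hence the $v_i$ ($i\in I$) are pairwise distinct. Suppose next that $|I|=3$. If $v_i\sim v_j$ for some $i\ne j$, then $x_iv_iv_jx_jx_i$ is a $4$-cycle, again contradicting Claim \ref{claim3.1}; so $v_1,v_2,v_3$ are pairwise non-adjacent. Now take a shortest $v_1$–$v_2$ path $P$ in the connected graph $G'$, of length $\ell\geq 2$. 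Since $x_1$ has no neighbor in $V(G')\setminus\{v_1\}$, $x_2$ has no neighbor in $V(G')\setminus\{v_2\}$, $v_1\not\sim v_2$, and $P$ is chordless, the vertices of $P$ together with $x_1,x_2$ induce a cycle of length $\ell+3\geq 5$ in $G$. But by Claims \ref{claim3.1}, \ref{claim3.2}, \ref{claim3.4}, \ref{claim3.5} and \ref{claim3.6}, $G$ has no induced cycle of length at least $4$, a contradiction. Hence $|I|\in\{1,2\}$.

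It then remains to color $G$ when $|I|\in\{1,2\}$; relabel so that $1\in I$, and, if $|I|=2$, that $I=\{1,2\}$. If $G'\cong C_3$ then $|I|=1$ (if $|I|=2$ then $v_1,v_2$ are distinct vertices of the triangle $G'$, hence adjacent, and $x_1v_1v_2x_2x_1$ is a $4$-cycle, contradicting Claim \ref{claim3.1}), so $G$ is obtained from two disjoint triangles by adding a single edge, and I would simply exhibit an explicit $4$-CI-coloring of this one fixed graph. Otherwise $G'\ncong C_3$, so by the minimality of $G$ it has a $4$-CI-coloring $c'$, and after permuting colors we may assume $c'(v_1)=4$. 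Define $c$ on $G$ by $c|_{G'}=c'$, $c(x_1)=1$, $c(x_2)=2$, $c(x_3)=3$. For $i\in[3]$, the color class $D_i$ contains $x_i$, which dominates $H$, so no vertex of $H$ lies in $G-N[D_i]$; as the rest of $G-N[D_i]$ is a subgraph of the forest $G'-N_{G'}[D_i\cap V(G')]$, $G-N[D_i]$ is a forest. For $D_4$, which contains no vertex of $H$: since $x_1\sim v_1\in D_4$ we have $x_1\in N[D_4]$, so only $x_2,x_3$ can survive from $H$; when $|I|=1$ neither of them has a neighbor in $G'$, so $\{x_2,x_3\}$ induces at most an edge disjoint from the rest, and when $|I|=2$ the vertex $x_3$ has no neighbor in $G'$ and $x_1$ is gone, so $x_3$ is a leaf; in either case every cycle of $G-N[D_4]$ would lie inside the forest $G'-N_{G'}[D_4\cap V(G')]$, so there is none. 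Thus $c$ is a $4$-CI-coloring of $G$, the desired contradiction.

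I expect the case $|I|=3$ to be the real obstacle: there one cannot hope to color $G$ directly (the triangle $H$ must be hit or dominated by all four color classes, yet with only three distinct attachments the class missing $H$ may still close up a cycle through $x_2$ and $x_3$ via $G'$), so the right move is instead to show that this configuration cannot occur at all, which is precisely where the structural information accumulated in Claims \ref{claim3.1}--\ref{claim3.6} (in effect, that $G$ has no long induced cycle) is used.
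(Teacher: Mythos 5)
Your proof is correct and follows essentially the same strategy as the paper's: delete the triangle, extend a $4$-CI-coloring of $G'$ (with an attachment vertex colored $4$, and the special case $G'\cong C_3$ handled by an explicit coloring), and use Claims \ref{claim3.1}--\ref{claim3.6} to rule out a surviving cycle in the class $D_4$. The only organizational difference is where the ``no induced cycle of length $\geq 4$'' machinery is applied --- you use it up front to exclude the configuration in which all three triangle vertices attach to $G'$, whereas the paper applies it to a shortest cycle through the two undominated triangle vertices --- and both routes go through; just make sure to actually write down the finite check for the $DK_3$ case rather than only asserting it.
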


\begin{proof}
	Suppose to the contrary that there exists a non-separating $K_3$ in $G$, denoted by $H$. Let $G':=G-V(H)$. Clearly, $G'$ is connected and $G' \ncong (\emptyset,\emptyset)$ (since $n \geq 4$). Since $G$ is connected, let $uv \in E(G)$ for some $u \in V(H)$ and $v \in V(G')$. If $G' \cong C_3$, then let $N_H(u)=\{u_1,u_2\}$ and $N_{G'}(v)=\{v_1,v_2\}$. Define a coloring $c$ of $G$ as follows:  $c(u)=4$, $c(v)=3$, $c(u_2)=c(v_2)=2$ and $c(u_1)=c(v_1)=1$. It is easy to check that $c$ is a 4-CI-coloring of $G$, and thus, $G$ is 4-CI-colorable.
	
	If $G' \ncong C_3$, then by the minimality of $G$, $G'$ admits a 4-CI-coloring $\left.c\right|_{G'}$. W.l.o.g., we may assume that $\left.c\right|_{G'}(v)=4$. Now we use $[3]$ to color $V(H)$ such that each vertex has a different color, and denote $\left.c\right|_{H}$ and $c:=\left.c\right|_{G'} \cup \left.c\right|_{H}$ by the present colorings of $H$ and $G$. Clearly, for each $i \in [3]$, the color class $D_i$ of $G$ is a cycle isolating set of $G$, and the color class $D_{4}$ is not a cycle isolating set of $G$ if and only if $G-N[D_{4}]$ contains a cycle $C$ formed by $\{u_1,u_2\}$ and a path of $G'$, where $N_H(u)=\{u_1,u_2\}$. We may assume that $C$ is a shortest cycle containing $\{u_1,u_2\}$ in $G$. Then $C$ is induced. By Claims \ref{claim3.2} and \ref{claim3.4}-\ref{claim3.6}, $|V(C)|=3$. However, there exists a cycle of length $4$ formed by $H$ and $C$, a contradiction to Claim \ref{claim3.1}. Therefore, $D_4$ is also a cycle isolating set of $G$, and further, we know that $c$ is a 4-CI-coloring of $G$ and $G$ is 4-CI-colorable.
\end{proof}

Let $DK_3$ and $K_3^+$ be the graphs defined as in Section \ref{sec2}.

\begin{claim}\label{claim3.8}
	$G$ contains no $DK_3$.
\end{claim}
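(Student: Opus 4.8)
The plan is to run the same minimal-counterexample reduction used for Claim \ref{claim3.7}, combined with Lemma \ref{lem3.2}. Suppose for contradiction that $G$ contains a $DK_3$, say $H$, consisting of two triangles $a_1a_2a_3$ and $b_1b_2b_3$ joined by the edge $a_1b_1$; since $\Delta(G)\le 3$, the vertices $a_1$ and $b_1$ have no further neighbours. If $G=H$, then the colouring sending $(a_1,a_2,a_3,b_1,b_2,b_3)$ to $(1,2,3,4,2,3)$ is a $4$-CI-colouring of $G$: the colour classes $\{a_2,b_2\}$ and $\{a_3,b_3\}$ dominate $V(G)$, while $G-N[\{a_1\}]$ and $G-N[\{b_1\}]$ are single edges, so each class is a cycle isolating set — a contradiction. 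So assume $G':=G-V(H)\ncong(\emptyset,\emptyset)$.

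The crucial step is to show that \emph{at most one of $a_2,a_3$ has a neighbour in $G'$}, and symmetrically for $b_2,b_3$. A neighbour of $a_2$ outside $\{a_1,a_2,a_3\}$ cannot lie in $\{b_1,b_2,b_3\}$, since it would create a $4$-cycle through $a_1b_1$, contradicting Claim \ref{claim3.1}; so it lies in $V(G')$, and $a_2$, $a_3$ each have at most one neighbour in $G'$. Suppose both do, say $a_2p,a_3q\in E(G)$ with $p,q\in V(G')$; note $p\ne q$, otherwise $\{a_1,a_2,a_3,p\}$ contains a $4$-cycle. The only edges of $G$ leaving $V(T_1):=\{a_1,a_2,a_3\}$ are $a_1b_1$, $a_2p$ and $a_3q$; by Claim \ref{claim3.7}, $G-V(T_1)$ is disconnected, and since three edges cannot be distributed so that every component of $G-V(T_1)$ receives at least two of them, some component receives exactly one, i.e.\ one of $a_1b_1$, $a_2p$, $a_3q$ is a bridge of $G$. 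Let $e$ be such a bridge and $Y$ the side of $G-e$ not containing $T_1$. If $Y\cong C_3$, then $Y$ is a triangle of $G$ meeting the rest of $G$ in a single edge, hence a non-separating $K_3$, contradicting Claim \ref{claim3.7}. Otherwise $Y$ is a connected claw-free subcubic graph of order less than $n$ with $Y\ncong C_3$, hence $4$-CI-colourable by the minimality of $G$ and Lemma \ref{lem3.4}; moreover $G-V(Y)$ is connected, contains all of $T_1$ and a further vertex (so $\ncong C_3$), and is likewise $4$-CI-colourable. Applying Lemma \ref{lem3.2} with $S=V(Y)$ — each component of $Y-N_Y[D_S]$ meets $G-V(Y)$ only through the single bridge $e$ — gives that $G$ is $4$-CI-colourable, a contradiction. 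Thus at most one of $a_2,a_3$, and at most one of $b_2,b_3$, has a neighbour in $G'$.

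Finally, $G'$ has no $C_3$-component: such a component would be a triangle of $G$ whose removal leaves $G$ connected, i.e.\ a non-separating $K_3$, contradicting Claim \ref{claim3.7}. Hence, by Lemma \ref{lem3.3} together with the minimality of $G$ and Lemma \ref{lem3.4}, $G'$ admits a $4$-CI-colouring $\left.c\right|_{G'}$. Colour $V(H)$ by $(a_1,a_2,a_3,b_1,b_2,b_3)\mapsto(1,2,3,4,2,3)$ as above and set $c:=\left.c\right|_{G'}\cup\left.c\right|_{H}$. For this colouring of $G[V(H)]$, the colour classes $\{a_2,b_2\}$ and $\{a_3,b_3\}$ dominate $V(H)$, the leftover of $\{a_1\}$ is the edge $b_2b_3$, and the leftover of $\{b_1\}$ is the edge $a_2a_3$; by the previous paragraph each of these two edges sends at most one edge to $V(G)\setminus V(H)$. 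Hence Lemma \ref{lem3.2} applies with $S=V(H)$, so $c$ is a $4$-CI-colouring of $G$ — the final contradiction.

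I expect the main obstacle to be the bridge case analysis in the middle paragraph: one has to argue that a bridge must appear (the counting step, which is where $\Delta(G)\le 3$ is really used), that the side cut off is either a forbidden triangle or small enough to invoke minimality, and that the two sides re-glue correctly through the single bridge edge under Lemma \ref{lem3.2}. The rest is the routine gluing pattern already used in Claims \ref{claim3.7} and \ref{claim2.3}.
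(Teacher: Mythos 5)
Your proof is correct, but the decisive step is handled by a genuinely different mechanism than the paper's. The paper colours the $DK_3$ with the same pattern you use and then argues that the two leftover edges ($a_2a_3$ and $b_2b_3$ in your labels) lie on \emph{no} cycle consisting of that edge plus a path through $G'$: it takes a shortest such cycle, notes it would be induced, and excludes all possible lengths via Claims \ref{claim3.1}, \ref{claim3.2} and \ref{claim3.4}--\ref{claim3.6} (length $3$ being killed by a resulting $C_4$). You instead prove the stronger local fact that at most one of $a_2,a_3$ (and of $b_2,b_3$) has a neighbour outside $H$ at all, via a counting/bridge argument: if both did, the triangle $a_1a_2a_3$ would send exactly three edges to the disconnected graph $G-\{a_1,a_2,a_3\}$ (Claim \ref{claim3.7}), forcing a component met by a single edge, i.e.\ a bridge; the small side is either a non-separating triangle (contradicting Claim \ref{claim3.7}) or, together with its complement, is coloured by minimality and glued through the single bridge edge by Lemma \ref{lem3.2}. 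After that, Lemma \ref{lem3.2} with $S=V(H)$ finishes immediately since each leftover edge meets $G'$ in at most one edge. What this buys: your verification of Claim \ref{claim3.8} uses only Claims \ref{claim3.1} and \ref{claim3.7} (plus minimality and Lemma \ref{lem3.2}), avoiding any direct appeal to the induced-cycle-length claims, at the cost of an extra case analysis; the paper's route is shorter given that those claims are already available. Two cosmetic remarks: the citation of Lemma \ref{lem3.4} is superfluous (minimality alone covers the smaller pieces), and the exclusion of $a_2b_1$ is by the degree of $b_1$, not by a $4$-cycle, as you in effect already noted at the outset.
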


\begin{proof}
	Suppose to the contrary that there exists a $DK_3$ in $G$, denoted by $H$. Since $\Delta(G) \leq 3$, and by Claim \ref{claim3.1}, we know that $H$ is induced. Let $G':=G-V(H)$. We straightforward color $H$ using the coloring noted in the proof (for the situation of $G' \cong C_3$) of Claim \ref{claim3.7}, and denote $\left.c\right|_{H}$ by the present coloring of $H$. By Claim \ref{claim3.7}, Lemma \ref{lem3.3} and the minimality of $G$, $G'$ contains no $C_3$-component, and $G'$ admits a 4-CI-coloring $\left.c\right|_{G'}$. Let $c:=\left.c\right|_{G'} \cup \left.c\right|_{H}$. As noted in the proof (for the situation of $G' \ncong C_3$) of Claim \ref{claim3.7}, $u_1u_2$ is contained in no cycle of $G$ formed by $\{u_1,u_2\}$ and a path of $G'$, and $v_1v_2$ is contained in no cycle of $G$ formed by $\{v_1,v_2\}$ and a path of $G'$, where $u_1u_2,v_1v_2 \in E(H)$ and $d_H(v)=2$ for each $v \in  \{u_1,u_2,v_1,v_2\} \subset V(H)$. Thus, $c$ is a 4-CI-coloring of $G$, and $G$ is 4-CI-colorable.
\end{proof}

\begin{claim}\label{claim3.9}
	$G$ contains no $K_3^+$.
\end{claim}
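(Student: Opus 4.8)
The plan is to argue by contradiction inside the minimal counterexample $G$: suppose $G$ contains a copy $H$ of $K_3^+$, with triangle $\{a,b,c\}$ and pendant edge $cd$, and produce a 4-CI-coloring of $G$. First I would record the local structure. An edge $ad$ or $bd$ would create a cycle of length $4$ on $\{a,b,c,d\}$, so by Claim~\ref{claim3.1} the copy $H$ is induced; and since $c$ already has the three neighbours $a,b,d$, we have $N(c)=\{a,b,d\}$. Next, $d$ has at most two neighbours in $G$: otherwise $d$ has two neighbours $d_1,d_2\ne c$, which lie outside $\{a,b,c\}$ (as $H$ is induced) and satisfy $c\notin N(d_1)\cup N(d_2)$, so claw-freeness at $d$ forces $d_1\sim d_2$; then $\{d,d_1,d_2\}$ and $\{a,b,c\}$ are disjoint triangles joined by the edge $cd$, i.e. $G$ contains a $DK_3$, contradicting Claim~\ref{claim3.8}. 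Finally, writing $a'$, $b'$ for the neighbours of $a$, $b$ off the triangle (when these exist), Claim~\ref{claim3.1} gives $a'\ne b'$ — else $\{a,b,c,a'\}$ carries a cycle of length $4$ — and hence $N(a)\cap N(b)=\{c\}$, since any common neighbour of $a$ and $b$ other than $c$ would have to be both $a'$ and $b'$.

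Next I would delete $V(H)$ and recolor, following the pattern of the proofs of Claims~\ref{claim3.1} and \ref{claim3.6}. Put $G'':=G-V(H)$ and let $S$ consist of $V(H)$ together with the vertex sets of all the $C_3$-components of $G''$; then $G-S$ has no $C_3$-component, so by Lemma~\ref{lem3.3} and the minimality of $G$ it admits a 4-CI-coloring. Extend it by colouring $a,b,c,d$ with $1,2,3,4$ and each $C_3$-component $T'$ of $G''$ with the three colours of $[4]\setminus\{j\}$, where $j$ is the colour of some vertex of $V(H)$ adjacent to $T'$ (here $j\ne 3$, since no such $T'$ is adjacent to $c$). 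For colour classes $1,2,3$, that the resulting class is a cycle isolating set of $G$ is routine and parallels Claims~\ref{claim3.1} and \ref{claim3.6} via Lemma~\ref{lem3.2}: after deleting $N[D_i]$ the surviving part of $H$ is $\{d\}$ (if $i\le 2$) or empty (if $i=3$), and the surviving remnants of the absorbed triangles are paths, each meeting $V(G)\setminus S$ by at most one edge, so no new cycle is created.

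The hard case is colour class $4$, where after deleting $N[D_4]$ the surviving part of $H$ is the edge $ab$, which can still send the two edges $aa'$, $bb'$ to the exterior, so Lemma~\ref{lem3.2} does not apply; here I would argue as in the proof of Claim~\ref{claim3.7}. Suppose $G-N[D_4]$ contains a cycle and let $C$ be a shortest one. Since $c\sim d$, and $d_1\sim d$ when $d_1$ exists, the vertices $c$, $d$, $d_1$ all lie in $N[D_4]$; consequently $(G-N[D_4])-\{a,b\}$ is a forest and each of $a,b$ has degree at most $2$ in $G-N[D_4]$ (its surviving neighbours lie in $\{b,a'\}$, respectively $\{a,b'\}$), so $C$ must use the edge $ab$. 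Being shortest, $C$ has no chord — a chord would yield a strictly shorter cycle of $G-N[D_4]$ — so $C$ is an induced cycle of $G$; but Claims~\ref{claim3.1}, \ref{claim3.2}, \ref{claim3.4}, \ref{claim3.5} and \ref{claim3.6} together forbid every induced cycle length except $3$, so $C=\{a,b,x\}$ with $x\in N(a)\cap N(b)=\{c\}$, forcing $x=c$ — impossible, since $c\in N[D_4]$. Hence $D_4$ is also a cycle isolating set, $G$ is 4-CI-colorable, and the contradiction establishes the claim. I expect the main obstacle to be the book-keeping around the absorbed $C_3$-components — checking that after deleting any $N[D_i]$ their remnants are acyclic, reach $V(G)\setminus S$ by at most one edge, and (for $i=4$) leave $(G-N[D_4])-\{a,b\}$ a forest — together with the colour-$4$ step, whose crux is that any cycle surviving in $G-N[D_4]$ is forced through the triangle edge $ab$, which $N(a)\cap N(b)=\{c\}$ and $c\in N[D_4]$ then rule out.
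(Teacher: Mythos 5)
Your proof is correct and follows essentially the same strategy as the paper: color $V(H)$ with the four colors, extend a 4-CI-coloring of the rest obtained from minimality, and use Claims~\ref{claim3.1}, \ref{claim3.2}, \ref{claim3.4}--\ref{claim3.6} (only induced cycles of length $3$ survive) together with Claim~\ref{claim3.8} to kill each color class, the crux being that a surviving cycle through the edge $ab$ would need a common neighbour of $a$ and $b$ other than $c$, which the no-$C_4$ condition forbids. The only differences are cosmetic: you absorb potential $C_3$-components of $G-V(H)$ into $S$, whereas the paper observes via Claim~\ref{claim3.7} that such components cannot exist (each would be a non-separating $K_3$), and your degree bound $d(d)\le 2$ (via claw-freeness and Claim~\ref{claim3.8}) cleanly replaces the paper's shortest-cycle-through-$d$ argument for the corresponding color class.
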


\begin{proof}
	Suppose to the contrary that there exists a $K_3^+$ in $G$, denoted by $H$. Since $\Delta(G) \leq 3$ and by Claim \ref{claim3.1}, $H$ is induced. Let $G':=G-V(H)$, and $V(H)=\{u_1,u_2,u_3,u_4\}$ with $d_H(u_1)=1$ and $d_H(u_2)=3$. By Claim \ref{claim3.7}, Lemma \ref{lem3.3} and the minimality of $G$, $G'$ contains no $C_3$-component, and $G'$ admits a 4-CI-coloring $\left.c\right|_{G'}$. We straightforward present a coloring $\left.c\right|_{H}$ of $H$ as follows: $\left.c\right|_{H}(u_i)=i$ for each $i \in [4]$. Let $c:=\left.c\right|_{G'} \cup \left.c\right|_{H}$. It is easy to see that the color class $D_2$ of $G$ is a cycle isolating set of $G$. The color class $D_{1}$ is not a cycle isolating set of $G$ if and only if $G-N[D_{1}]$ contains a cycle $C$ formed by $\{u_3,u_4\}$ and a path of $G'$. As noted in the proof (for the situation of $G' \ncong C_3$) of Claim \ref{claim3.7}, $u_3u_4$ is contained in no such cycle $C$ of $G$. By the symmetry of $u_3$ and $u_4$, the color class $D_{3}$ or $D_4$ is not a cycle isolating set of $G$ if and only if $G-N[D_{3}]$ or $G-N[D_{4}]$ contains a cycle $C'$ formed by $u_1$ and a path of $G'$. Due to the same reason noted in the proof of Claim \ref{claim3.7}, $u_1$ in fact is contained in no such cycle $C'$ of $G$, unless $|V(C')|=3$. However, there exists a $DK_3$ formed by $H$ and $C'$ in $G$, a contradiction to Claim \ref{claim3.8}. Therefore, for each $i \in [4]$, the color class $D_i$ is a cycle isolating set of $G$, and $c$ is 4-CI-coloring of $G$ and $G$ is 4-CI-colorable.
\end{proof}

	By Claim \ref{claim3.1}, $G$ contain no a $C_4$, and by Claim \ref{claim3.9}, $G$ contain no a $K_3^+$. For each vertex $v \in V(G)$ with $d(v)=\Delta(G)=3$, $G[N[v]] \cong K_{1,3}$, a contradiction to the assumption that $G$ is claw-free. Therefore, the minimal counterexample $G$ does not exist. This completes the proof of Theorem \ref{th3.1}. \qed

\section {\large Closing remarks and open problems}

For certain type of isolation in graphs, the corresponding result of partition is stronger than the result of upper bound. In this note, we would like to prove two results of isolation partition (namely, Conjectures \ref{conj1.7} and \ref{conj1.8}), but failed. The obstacle of our proofs is the maximum degree of a graph. Adding the condition of ``planar'', we wonder whether the following conjecture is true or not.

\begin{conjecture}\label{conj4.1}
	Every connected planar graph, except $C_3$, can be partitioned into four disjoint cycle isolating sets.
\end{conjecture}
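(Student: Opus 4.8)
The plan is to prove the stronger statement that every connected planar graph $G \ncong C_3$ is $4$-CI-colorable, again by taking a minimal counterexample $G$ of minimum order $n$ and deriving structural restrictions until planarity forces a contradiction. The conceptual starting point is a reformulation of $4$-CI-colorability. Since a subset $D$ is a cycle isolating set of $G$ exactly when $G-N[D]$ is acyclic, and since a shortest cycle living inside $V(G)\setminus N[D]$ is chordless there and hence an \emph{induced} cycle of $G$, one checks that $D$ is a cycle isolating set if and only if $N[C]\cap D\neq\emptyset$ for every induced cycle $C$ of $G$, where $N[C]=V(C)\cup N(V(C))$ (indeed $V(C)\cap N[D]=\emptyset \iff N[C]\cap D=\emptyset$). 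Consequently, $c\colon V(G)\to[4]$ is a $4$-CI-coloring if and only if, for every induced cycle $C$, the closed neighborhood $N[C]$ receives all four colors. The problem thus becomes a \emph{polychromatic $4$-coloring} of the hypergraph whose hyperedges are the sets $N[C]$; and excluding $C_3$ is precisely what makes $|N[C]|\ge 4$ for every induced cycle (a triangle forming its own component is the only obstruction in a connected graph), which is exactly the necessary size condition.

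With this reformulation in hand, I would first port the gluing tools of the subcubic proof: Lemma~\ref{lem3.3} (component decomposition) holds verbatim, and the one-edge attachment lemma, Lemma~\ref{lem3.2}, also holds with no degree hypothesis, since attaching each component of a forest to another forest by at most one edge creates no cycle. The base cases are trees and cycles (Lemma~\ref{lem3.4}) together with a short list of small $2$-connected plane graphs. Next I would reduce the easy configurations. A leaf is reducible, because re-inserting a degree-$1$ vertex into any color class keeps every $G-N[D_i]$ a forest, so $\delta(G)\ge 2$. A cut vertex is reducible by splitting $G$ along its block-cut tree: every induced cycle of $G$ lies inside a single block, so coloring the blocks, matched at shared cut vertices (as in the $G'\cong C_3$ case of Claim~\ref{claim3.7}, with $C_3$-blocks absorbed into their neighbor via the pendant-triangle arguments), recombines into a $4$-CI-coloring because $N_{B}[C]\subseteq N_G[C]$ preserves polychromaticity. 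Hence I may assume $G$ is $2$-connected with $\delta(G)\ge 2$.

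The heart of the argument is then to exploit planarity to locate an \emph{unavoidable reducible configuration} by a discharging argument on a fixed plane embedding, driven by Euler's formula $|V(G)|-|E(G)|+|F(G)|=2$ and the resulting sparsity (low-degree vertices, faces of small length). I would assign charge to vertices and faces and redistribute so as to expose either a short induced cycle bounding a region that attaches to $G-V(C)$ by few edges—reducible by the analogue of Claim~\ref{claim3.3}—or a small cluster of low-degree vertices whose neighborhood can be recolored. The targeted reducible configurations are those in which some induced cycle $C$ meets $G-V(C)$ in a controlled way, so that a four-color palette can be rotated around $C$ (the ``repeating $1,2,3,4$'' colorings of Claims~\ref{claim3.3}--\ref{claim3.6}) while keeping $N[C']$ rainbow for every affected induced cycle $C'$.

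The main obstacle, exactly as the authors flag, is the unbounded maximum degree. In the subcubic setting each vertex of an induced cycle has at most one edge leaving the cycle, so $C$ attaches to $G-V(C)$ by boundedly many edges and the cyclic palette extends cleanly; in a general planar graph a single induced cycle may join the rest of $G$ by arbitrarily many edges and pass through arbitrarily high-degree vertices, so a local recoloring around $C$ can simultaneously destroy the polychromatic condition on many other induced cycles. The crux will therefore be to engineer the discharging so that the configuration it produces always has \emph{bounded attachment} to $G-V(C)$—for instance a separating short cycle, or a face whose boundary carries only bounded total outside degree—and to establish a strengthened gluing lemma that tolerates several connecting edges by using the planarity of the attachment pattern. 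I expect that taming these high-degree interactions, rather than coloring any individual cycle, is the decisive difficulty, and is precisely where the present subcubic and claw-free techniques cease to transfer.
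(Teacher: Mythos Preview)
The statement you are attempting to prove is \emph{Conjecture}~\ref{conj4.1}; the paper does not contain a proof of it. The authors pose it explicitly as an open problem (``we wonder whether the following conjecture is true or not''), immediately after noting that the obstacle to their subcubic techniques is the maximum degree. So there is no ``paper's own proof'' to compare against.

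As for your proposal on its own terms: it is a research plan, not a proof, and you essentially concede this in the final paragraph. The reformulation of $4$-CI-colorability as a polychromatic $4$-coloring of the hypergraph $\{N[C]:C\text{ induced cycle}\}$ is correct and clarifying, and the easy reductions (leaves, cut vertices, components, trees and cycles) are sound in outline. But the decisive step---a discharging argument that always produces a configuration with \emph{bounded attachment} to the rest of the graph, together with a gluing lemma strong enough to absorb that attachment---is entirely hypothetical. You do not specify the charges, the discharging rules, or the list of reducible configurations, and you do not prove that any such list exists. In a general planar graph an induced face boundary can meet arbitrarily many other induced cycles through high-degree vertices, so the ``rotate the palette around $C$'' recolorings from Claims~\ref{claim3.3}--\ref{claim3.6} have no obvious analogue; you acknowledge exactly this. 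Until the discharging yields a concrete unavoidable set whose members are each provably reducible, the argument has a genuine gap at its core, and the conjecture remains open.
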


A recent nice book \cite{Haynes2023} gave a survey (namely, Chapter 12) on domination partitions of graphs. Cockayne and Hedetniemi \cite{Cockayne1975,Cockayne1977} introduced the domatic number of graphs. In this paper, we shall further define the $\mathcal{H}$-isomatic number of graphs.

\begin{itemize}
	\item A {\it domatic $k$-partition} of a graph $G$ is a partition of $V(G)$ into $k$ sets $V_1,V_2,\ldots,V_{k}$ such that for each $i \in [k]$, $V_i$ is a dominating set of $G$. The {\it domatic number} of $G$, denoted by ${\rm dom}(G)$, is the maximum positive integer $k$ of any domatic $k$-partition of $G$.
	
	\item An {\it $\mathcal{H}$-isomatic $k$-partition} of a graph $G$ is a (weak) partition of $V(G)$ into $k$ sets $V_1,V_2,\ldots,V_{k}$ such that for each $i \in [k]$, $V_i$ is an $\mathcal{H}$-isolating set of $G$. The {\it $\mathcal{H}$-isomatic number} of $G$, denoted by ${\rm iso}(G,\mathcal{H})$, is the maximum $k$ of any $\mathcal{H}$-isomatic $k$-partition of $G$. If $\mathcal{H}=\{H\}$, then the notation is defined as {\it $H$-isomatic $k$-partition/number} simply, and write ${\rm iso}(G,H)$ for short.

\end{itemize}

Note that a $K_1$-isomatic $k$-partition is just a domatic $k$-partition of a graph $G$, and that ${\rm dom}(G)={\rm iso}(G,K_1)$. We simply say that an {\it $\mathcal{H}$-isomatic partition} is an $\mathcal{H}$-isomatic $k$-partition for some unspecifed integer $k \geq 1$. A $K_2$-isomatic partition of a graph $G$ is simply called an {\it isomatic partition} of $G$, and we denote by ${\rm iso}(G)$ instead of ${\rm iso}(G,K_2)$ the {\it isomatic number} of $G$. Theorems \ref{th1.1} and \ref{th1.6} imply that ${\rm dom}(G) \geq 2$ for any connected graph $G \ncong K_1$, and that ${\rm iso}(G) \geq 3$ for any connected graph $G \notin \{K_2,C_5\}$, respectively.

\begin{proposition}
	If $G$ is a graph of order $n$, then ${\rm iso}(G,H) \leq \frac{n}{\iota(G,\mathcal{H})}$.
\end{proposition}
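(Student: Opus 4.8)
The plan is to count cardinalities in a fixed $\mathcal{H}$-isomatic $k$-partition of $G$, where $k = {\rm iso}(G,\mathcal{H})$ is the maximum such value. Let $V_1, V_2, \ldots, V_k$ be such a partition. The key observation is that each part $V_i$ is by definition an $\mathcal{H}$-isolating set of $G$, and hence has cardinality at least $\iota(G,\mathcal{H})$, since $\iota(G,\mathcal{H})$ is the \emph{minimum} cardinality of an $\mathcal{H}$-isolating set. So $|V_i| \geq \iota(G,\mathcal{H})$ for every $i \in [k]$.

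Since the $V_i$ are pairwise disjoint (a partition, or weak partition, of $V(G)$), summing over $i$ gives
$$
n = |V(G)| \geq \sum_{i=1}^{k} |V_i| \geq k \cdot \iota(G,\mathcal{H}) = {\rm iso}(G,\mathcal{H}) \cdot \iota(G,\mathcal{H}).
$$
Dividing through by $\iota(G,\mathcal{H})$ yields ${\rm iso}(G,\mathcal{H}) \leq n / \iota(G,\mathcal{H})$, as required. (Here I implicitly assume $\iota(G,\mathcal{H}) \geq 1$, i.e.\ that $G$ is not already $\mathcal{H}$-free with the empty set serving as an isolating set; if $\iota(G,\mathcal{H}) = 0$ the inequality is vacuous or should be interpreted with the convention that the bound is $+\infty$, and one can simply note this edge case or restrict to graphs containing a member of $\mathcal{H}$.) The statement for $H$-isolation follows by taking $\mathcal{H} = \{H\}$.

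I do not anticipate a genuine obstacle here — the argument is a one-line pigeonhole/counting bound, exactly parallel to Ore's corollary $\gamma(G) \leq n/2$ following from the existence of a partition into two dominating sets, and to the remark that Theorem~\ref{th1.3} follows from Theorem~\ref{th1.6}. The only point requiring a moment of care is the degenerate case $\iota(G,\mathcal{H}) = 0$ noted above, and the precise meaning of ``weak partition'' (some parts possibly empty), but an empty part only helps the inequality since it contributes $0 \geq \iota(G,\mathcal{H})$ would fail — so in fact one should observe that in a partition realizing the maximum $k$, no part need be empty, or simply note that empty parts can be discarded without affecting the bound's direction. Either way the conclusion stands.
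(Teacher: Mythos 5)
Your proof is correct and takes essentially the same approach as the paper's: fix an $\mathcal{H}$-isomatic $k$-partition with $k={\rm iso}(G,\mathcal{H})$, bound each part below by $\iota(G,\mathcal{H})$, and sum. The paper's version does not even mention the degenerate case $\iota(G,\mathcal{H})=0$ or the possibility of empty parts in a weak partition, so your added remarks only make the argument more careful.
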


\begin{proof}
	Let $\{V_1,V_2,\ldots,V_k\}$ be an $\mathcal{H}$-isomatic $k$-partition of $G$ where $k={\rm iso}(G,\mathcal{H})$. Since, for each $i \in [k]$, $V_i$ is an $\mathcal{H}$-isolating set of $G$, we have $|V_i| \geq \iota(G,\mathcal{H})$. Then
	$$n=\sum_{i=1}^{k}|V_i| \geq \sum_{i=1}^{k}\iota(G,\mathcal{H})=k\iota(G,\mathcal{H})={\rm iso}(G,\mathcal{H}) \cdot \iota(G,\mathcal{H}).$$
	The result follows.
\end{proof}

For any integer $k \geq 1$, we define the {\it $k$-clique isomatic $k$-partition} and {\it cycle isomatic $k$-partition} of a graph $G$, and denote by ${\rm iso}(G,k)$ and ${\rm iso}_c(G)$ the {\it $k$-clique isomatic number} and {\it cycle isomatic number} of $G$, respectively. Although Conjectures \ref{conj1.7} and \ref{conj1.8} have not been proven yet, the following two results of partition are easy to obtain.

\begin{proposition}\label{prop4.3}
	Every graph can be partitioned into $k$ disjoint $k$-clique isolating sets, that is, every graph admits a $k$-clique isomatic $k$-partition.
\end{proposition}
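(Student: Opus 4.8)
The plan is to exhibit, for an arbitrary graph $G$, an explicit partition of $V(G)$ into $k$ sets each of which is a $k$-clique isolating set. The key observation is that a set $D$ is a $k$-clique isolating set of $G$ precisely when $G - N[D]$ contains no $K_k$, and that a $K_k$ has exactly $k$ vertices. So if $D$ meets \emph{every} copy of $K_k$ in $G$, then $D$ is trivially a $k$-clique isolating set, since then $G - N[D] \subseteq G - D$ already contains no $K_k$. Thus it suffices to partition $V(G)$ into $k$ parts each of which is a transversal of the $K_k$-subgraphs of $G$.

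First I would take a proper vertex colouring of $G$ using colours $[k]$ if $\chi(G) \le k$, but in general $\chi(G)$ may be large, so instead I would argue directly: order the vertices of $G$ as $v_1, v_2, \ldots, v_n$ arbitrarily and colour them cyclically, setting $c(v_j) \equiv j \pmod k$ (with colours taken in $[k]$). Let $V_i = \{v_j : c(v_j) = i\}$ for $i \in [k]$; these are pairwise disjoint and cover $V(G)$. The next step is to check that each $V_i$ hits every $K_k$ of $G$. Indeed, any $k$-clique $H$ of $G$ has vertex set $\{v_{j_1}, \ldots, v_{j_k}\}$ with $j_1 < \cdots < j_k$; I claim these $k$ indices realise all $k$ residues modulo $k$, hence one of them lies in $V_i$ for each $i$. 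This claim is the only substantive point: it is \emph{not} true in general that $k$ arbitrary integers cover all residues mod $k$. So the plan must be adjusted — a cyclic colouring is too crude.

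The correct approach is to use a \emph{greedy / proper-colouring-like} argument on the $K_k$-structure rather than on $G$ itself. Define an auxiliary hypergraph $\mathcal{K}$ on vertex set $V(G)$ whose edges are the vertex sets of the $k$-cliques of $G$; each edge has size exactly $k$. What I want is a colouring of $V(G)$ with $k$ colours such that every edge of $\mathcal{K}$ is \emph{polychromatic} (receives all $k$ colours), equivalently rainbow. I would build this greedily: process vertices in an arbitrary order $v_1, \ldots, v_n$; when colouring $v_j$, the constraint from each already-processed $k$-clique $H \ni v_j$ is that $v_j$ must avoid the (at most $k-1$, in fact exactly $|H \cap \{v_1,\ldots,v_{j-1}\}|$) colours already used on $H$'s earlier vertices — but within a single clique $H$ of size $k$, by the time we reach its last vertex $k-1$ colours are forbidden and exactly one remains, so we are forced but never stuck. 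The subtlety is that $v_j$ may lie in several $k$-cliques simultaneously, and the forbidden sets could union to all of $[k]$. The hard part will be showing this conflict cannot occur; the resolution is that $\Delta_{\mathcal{K}}$-type bounds do \emph{not} save us, so instead I would order the vertices so that processing respects clique structure — for instance, fix one $k$-clique at a time is impossible globally, so the honest fix is: observe that it suffices to make each $K_k$ rainbow, and for this one can colour $V(G)$ by any function that is injective on every $K_k$; such a colouring with $k$ colours exists iff the "clique graph" (vertices of $G$, edges joining two vertices lying in a common $K_k$) is $k$-colourable — and that graph need not be $k$-colourable either. I therefore expect the author's actual proof to be the short one I rejected, valid because of some additional structural fact (e.g. the $K_k$'s of $G$ partition or form a simplicial structure), and the main obstacle of my plan is exactly locating that fact; failing it, one falls back on: every graph has a vertex colouring with $k$ colours in which no $K_k$ is monochromatic is false for $k$ too, so the cleanest route is simply $c(v_j) = (j \bmod k) + 1$ together with the genuine claim that I would verify carefully — that in fact $G - N[V_i]$ being $K_k$-free only requires $V_i$ to be a dominating-type transversal, and here the slickest correct statement is that \emph{one} colour class suffices to be a $k$-clique isolating set when it is a maximal independent-like set, proved by the Ore-type argument of Theorem \ref{th1.1} iterated $k$ times.
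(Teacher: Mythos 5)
Your proposal does not arrive at a proof: it is a sequence of attempts, each of which you yourself (correctly) identify as failing, and the final sentence gestures at an ``Ore-type argument iterated $k$ times'' without carrying it out. The root of the trouble is your choice of sufficient condition. You try to make each part $V_i$ a \emph{transversal} of all $k$-cliques (equivalently, to make every $K_k$ rainbow under a $k$-colouring). That condition is strictly stronger than what the definition asks for, and it is genuinely unachievable: for $G=K_{k+1}$ some two vertices must share a colour, so some $K_k$ is not rainbow --- yet the proposition holds trivially there, because $N[v]=V(G)$ for every vertex $v$. The idea you are missing is precisely the closed neighbourhood: a $k$-clique isolating set $D$ only needs every $K_k$ to meet $N[D]$, not $D$ itself, and a \emph{single} vertex of a clique already dominates the entire clique.

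With that observation the paper's proof is a two-line induction, not a hypergraph colouring. Take a minimal counterexample $G$. If $G$ has no $K_k$, put all of $V(G)$ in one part and let the others be empty (the partition is explicitly allowed to be weak). Otherwise pick a $K_k$, say $H$ with $V(H)=\{u_1,\dots,u_k\}$, let $G'=G-V(H)$, and by minimality take a $k$-clique isomatic $k$-partition $\{V_1',\dots,V_k'\}$ of $G'$; set $V_i=V_i'\cup\{u_i\}$. Since $u_i$ dominates all of $V(H)$, the graph $G-N_G[V_i]$ is an induced subgraph of $G'-N_{G'}[V_i']$, which is $K_k$-free by induction, so each $V_i$ is a $k$-clique isolating set of $G$. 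None of your proposed routes (cyclic colouring, greedy rainbow colouring of the clique hypergraph, colouring the ``clique graph'') recovers this, so the gap is not a technical detail but the central idea of the argument.
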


\begin{proof}
	Let $G$ be a counterexample of minimum order $|V(G)|$. Then, $G$ can not be partitioned into $k$ disjoint $k$-clique isolating sets. Let $\pi=\{V_1,V_2,\ldots,V_k\}$ be a vertex partition of $V(G)$ into $k$ pairwise disjoint subsets. If $G$ contains no $K_k$, then let $V_k=V(G)$ and $V_i=\emptyset$ for each $i \in [k-1]$. It is easy to see that $\pi$ is a $k$-clique isomatic $k$-partition of $G$ now, a contradiction. Assume that $G$ contains a $K_k$, denoted by $H$. Let $V(H)=\{u_1,u_2,\ldots,u_k\}$ and $G'=G-V(H)$. By the minimality of $G$, $G$ admits a $k$-clique isomatic $k$-partition $\pi'=\{V_1',V_2',\ldots,V_k'\}$. For each $i\in [k]$, let $V_i=V_i' \cup \{u_i\}$. It is easy to check that $V_i$ is a $k$-clique isolating set of $G$. Thus, $\pi$ is a $k$-clique isomatic $k$-partition of $G$, also a contradiction.
\end{proof}

\begin{proposition}\label{prop4.4}
	Every graph can be partitioned into three disjoint cycle isolating sets, that is, every graph admits a cycle isomatic $3$-partition.
\end{proposition}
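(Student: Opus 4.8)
The plan is to build the partition explicitly from a depth-first search. Fix a DFS spanning forest $F$ of $G$ (one rooted DFS tree in each component), let $d(v)$ be the depth of $v$ in $F$ (roots having depth $0$), and for $a\in\{0,1,2\}$ set $C_a:=\{v\in V(G):d(v)\equiv a\pmod 3\}$. Then $\{C_0,C_1,C_2\}$ is a (weak) partition of $V(G)$, so it remains only to verify that each $C_a$ is a cycle isolating set of $G$, i.e.\ that $G-N[C_a]$ is a forest. The single structural fact I would use is the classical property of a DFS forest of an undirected graph: every edge of $G$ joins two vertices one of which is an ancestor of the other in $F$; in particular the two ends of any edge have distinct depths.

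I would argue by contradiction: suppose $G-N[C_a]$ contains a cycle $Z$. First take a vertex $z^*$ on $Z$ of minimum depth. By the DFS property and minimality, both neighbours of $z^*$ on $Z$ are proper descendants of $z^*$ in $F$. Since $z^*\notin C_a$, either $d(z^*)\equiv a+2$ or $d(z^*)\equiv a+1\pmod 3$. If $d(z^*)\equiv a+2$, the child $w$ of $z^*$ lying on the $F$-path down to one of those $Z$-neighbours has $d(w)=d(z^*)+1\equiv a$, so $w\in C_a$ is adjacent to $z^*$, contradicting $z^*\notin N[C_a]$. If $d(z^*)\equiv a+1$, then $z^*$ has no parent (a parent would have depth $\equiv a$ and force $z^*\in N[C_a]$), so $z^*$ is a root of $F$ and hence $a=2$.

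In the remaining situation ($a=2$) I would switch to a vertex $z^{**}$ of $Z$ of maximum depth; by the DFS property and maximality its two $Z$-neighbours are proper ancestors of $z^{**}$, so $z^{**}$ is not a root and neither neighbour is a leaf. Now observe that every vertex $u$ of $G-N[C_2]$ is either a leaf of $F$ at depth $\equiv1\pmod3$ or a root of $F$: indeed $d(u)\not\equiv2\pmod3$ since $u\notin C_2$, and a child of $u$ (if $d(u)\equiv1$) or a parent of $u$ (if $d(u)\equiv0$) would lie in $C_2$ and put $u$ into $N[C_2]$. Applying this to $z^{**}$ forces it to be a leaf at depth $\equiv1$, and applying it to each of its two $Z$-neighbours forces both to be roots; but $z^{**}$ has a unique root above it in $F$, so those two neighbours coincide — impossible on a cycle. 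This contradiction shows $G-N[C_a]$ is acyclic. Finally one checks this is a genuine partition into three cycle isolating sets; when some $C_a$ is empty, $G$ is a disjoint union of stars and the empty set is trivially cycle isolating, so nothing breaks. I expect the only delicate point to be exactly this case where the lowest vertex of the putative bad cycle is a root of $F$; once the DFS ancestor–descendant property is in hand, the rest is immediate.

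As a remark, the proposition also follows in one line from Theorem \ref{th1.6}: a $K_2$-isolating set $D$ has $G-N[D]$ edgeless, hence acyclic, so it is a cycle isolating set; thus any partition of a connected graph other than $K_2$ and $C_5$ into three isolating sets is already a partition into three cycle isolating sets, and the two exceptions together with the reduction to components (cf.\ Lemma \ref{lem3.3}) are handled by inspection.
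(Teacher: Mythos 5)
Your proof is correct, and your main argument is genuinely different from the one in the paper. The paper proceeds by a minimal-counterexample reduction: trees and cycles are handled ad hoc, and every other connected graph $G\notin\{K_2,C_5\}$ is dispatched by invoking Theorem~\ref{th1.6} of Boyer and Goddard together with the observation that an isolating set is automatically a cycle isolating set --- which is exactly the ``one-line'' route you mention in your closing remark. Your primary argument instead gives a direct, self-contained construction: colour by DFS-depth modulo $3$ and use the no-cross-edge property of depth-first search. I checked the two cases carefully --- the minimum-depth vertex of a putative cycle in $G-N[C_a]$ kills $a\in\{0,1\}$ (and the subcase $d(z^*)\equiv a+2$ for all $a$), and for $a=2$ your observation that every surviving vertex is a root or a leaf of depth $\equiv 1\pmod 3$ forces the two cycle-neighbours of the deepest vertex to be one and the same root, which is impossible --- and it all holds; empty colour classes cause no trouble since the acyclicity argument never assumes $C_a\neq\emptyset$ (your aside about unions of stars is not even needed). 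What each approach buys: yours is elementary, constructive (computable in linear time), and independent of the nontrivial Boyer--Goddard theorem; the paper's is shorter given that Theorem~\ref{th1.6} is already quoted, and fits the paper's theme of deriving isomatic bounds from isolation-partition theorems. Your DFS argument would be a worthwhile alternative to record, precisely because it does not lean on Theorem~\ref{th1.6}.
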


\begin{proof}
	Let $G$ be a counterexample of minimum order $|V(G)|$. It is clear that $G$ is connected; otherwise, $G$ has a component that can not be partitioned into three disjoint cycle isolating sets, contradicting the minimality of $G$. Let $\pi=\{V_1,V_2,V_3\}$ be a vertex partition of $V(G)$ into three pairwise disjoint subsets. If $G$ is a tree, then let $V_3=V(G)$ and $V_i=\emptyset$ for each $i \in [2]$. Clearly, $\pi$ is a cycle isomatic $3$-partition of $G$, a contradiction. If $G$ is a cycle, then $|V(G)| \geq 3$ and let $v_1,v_2,v_3 \in V(G)$. For each $i \in [3]$, let $v_i \in V_i$, and we partition $V(G) \setminus \{v_1,v_2,v_3\}$ arbitrarily. It is easy to verify that $\pi$ is a cycle isomatic $3$-partition of $G$, a contradiction. So, we may assume that $G \notin \{K_2,C_5\}$ is a connected graph. By Theorem \ref{th1.6}, we know that $G$ admits an isomatic $3$-partition $\pi^*$. It is easy to see that $\pi^*$ is also a cycle isomatic $3$-partition of $G$, again a contradiction. The result follows.
\end{proof}

By Propositions \ref{prop4.3} and \ref{prop4.4}, we know that for any graph $G$, ${\rm iso}(G,k) \geq k$ and ${\rm iso}_c(G) \geq 3$. Furthermore, we remark that Conjectures \ref{conj1.7} and \ref{conj1.8} are equivalent to the following conjectures (i) and (ii), respectively.

\begin{conjecture}
	(i) Let $k \geq 3$ be an integer. For any connected graph $G \ncong K_k$, ${\rm iso}(G,k) \geq k+1$. (ii) For any connected graph $G \ncong C_3$, ${\rm iso}_c(G) \geq 4$.
\end{conjecture}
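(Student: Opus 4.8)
Since parts (i) and (ii) of the conjecture are merely reformulations of Conjectures \ref{conj1.7} and \ref{conj1.8} in the language of the isomatic number, the substantive content to be established is exactly the equivalence asserted just above the conjecture. The plan is to prove this equivalence \emph{pointwise}: for each fixed integer $k \geq 3$ and each connected graph $G \ncong K_k$ I would show that ``$G$ can be partitioned into $k+1$ disjoint $k$-clique isolating sets'' holds if and only if ${\rm iso}(G,k) \geq k+1$, and, in parallel, that for each connected $G \ncong C_3$ the partition into four disjoint cycle isolating sets exists if and only if ${\rm iso}_c(G) \geq 4$. Because Conjectures \ref{conj1.7}, \ref{conj1.8} and parts (i), (ii) range over identical families of graphs, these pointwise equivalences yield the stated equivalence at once, so it suffices to argue for one graph at a time.

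The one genuine ingredient is a monotonicity property of isolating sets, which I would record first as an observation valid for an arbitrary family $\mathcal{H}$ of connected graphs: if $D$ is an $\mathcal{H}$-isolating set of $G$ and $D \subseteq D'$, then $D'$ is an $\mathcal{H}$-isolating set of $G$. Indeed $N[D] \subseteq N[D']$, so $G - N[D']$ is an induced subgraph of $G - N[D]$; any member of $\mathcal{H}$ occurring as a subgraph of $G - N[D']$ would then occur as a subgraph of $G - N[D]$, contradicting that $D$ isolates. Specializing $\mathcal{H}$ to $\{K_k\}$ and to $\mathcal{C}$ covers both parts.

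Granting this, the two directions are routine. For the forward direction, a partition of $V(G)$ into $k+1$ disjoint $k$-clique isolating sets is by definition a $k$-clique isomatic $(k+1)$-partition, whence ${\rm iso}(G,k) \geq k+1$; the cycle case is identical. For the backward direction, assume ${\rm iso}(G,k) \geq k+1$ and take a witnessing partition $\{V_1,\ldots,V_m\}$ into $k$-clique isolating sets with $m \geq k+1$. Setting $W := V_{k+1}\cup\cdots\cup V_m$, the family $\{V_1,\ldots,V_k,W\}$ is a partition of $V(G)$ into exactly $k+1$ parts: the first $k$ are isolating by hypothesis, and $W \supseteq V_{k+1}$ is isolating by the monotonicity observation. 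This is precisely a partition into $k+1$ disjoint $k$-clique isolating sets, the conclusion of Conjecture \ref{conj1.7} for this $G$. Replacing $\{K_k\}$ by $\mathcal{C}$ and $k+1$ by $4$ gives the cycle version verbatim.

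I do not anticipate a real obstacle here: the argument is definitional together with the one-line monotonicity observation. The only points needing care are the bookkeeping conventions. First, one must confirm that ``partition into $k+1$ disjoint isolating sets'' and ``$k$-clique isomatic $(k+1)$-partition'' denote the same object, namely a weak partition into $k+1$ classes each \emph{required} to be an isolating set, so that an empty class is admissible only when $G$ contains no $K_k$; this is why the merging step never pads with empty parts but only coalesces existing ones. Second, one should check that the excluded graphs match on both sides, e.g. ${\rm iso}(K_k,k)=k<k+1$ and ${\rm iso}_c(C_3)=3<4$, so that $K_k$ and $C_3$ are legitimately ruled out in both formulations. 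Once these conventions are pinned down, the equivalences follow as above.
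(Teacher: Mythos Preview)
Your proposal is correct. The statement is a conjecture, and the paper does not attempt to prove it; the only content the paper supplies is the one-line remark that parts (i) and (ii) are equivalent to Conjectures~\ref{conj1.7} and~\ref{conj1.8}, with no justification given. Your proof fills in exactly this equivalence via the obvious monotonicity of $\mathcal{H}$-isolating sets and a merge-the-excess-parts argument, which is the natural (and essentially the only) way to do it, so there is nothing substantive to compare.
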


At last, we close this paper with a more general problem.

\begin{problem}
	For some finite set $\mathcal{E}(G,\mathcal{H})$ of except graphs, determine the maximum positive integer $k$ such that ${\rm iso}(G,\mathcal{H}) \geq k$ for connected graphs $G \notin \mathcal{E}(G,\mathcal{H})$.
\end{problem}

\vspace{3mm}
\noindent{\large\bf Acknowledgments}
\vspace{3mm}

This work was supported by the National Natural Science Foundation of China (Nos. 12171402 and 12361070).

\end{document}